\renewcommand{\bar}{\overline}
\newcommand{\lint}{\llbracket}
\newcommand{\rint}{\rrbracket}
\numberwithin{equation}{section}
\newtheorem{theorema}{Theorem}
\newtheorem{theorem}{Theorem}[section]
\newtheorem{lemma}[theorem]{Lemma}
\newtheorem{proposition}[theorem]{Proposition}
\newtheorem{rem}[theorem]{Remark}
\newcommand{\red}{}
\newcommand{\dd}{\mathrm{d}}
\newcommand{\ind}{\mathbf{1}}
\newcommand{\Supp}{\mathrm{Supp}}
\renewcommand{\tilde}{\widetilde}
\renewcommand{\hat}{\widehat}
\newcommand{\cG}{{\ensuremath{\mathcal G}} }
\newcommand{\cA}{{\ensuremath{\mathcal A}} }
\newcommand{\cB}{{\ensuremath{\mathcal B}} }
\newcommand{\cF}{{\ensuremath{\mathcal F}} }
\newcommand{\cP}{{\ensuremath{\mathcal P}} }
\newcommand{\cC}{{\ensuremath{\mathcal C}} }
\newcommand{\cD}{{\ensuremath{\mathcal D}} }
\newcommand{\cM}{{\ensuremath{\mathcal M}} }
\DeclareMathSymbol{\leqslant}{\mathalpha}{AMSa}{"36} 
\DeclareMathSymbol{\geqslant}{\mathalpha}{AMSa}{"3E} 
\DeclareMathSymbol{\eset}{\mathalpha}{AMSb}{"3F}     
\newcommand{\Var}{\mathrm{Var}}        
\newcommand{\maxtwo}[2]{\max_{\substack{#1 \\ #2}}} 
\newcommand{\suptwo}[2]{\sup_{\substack{#1 \\ #2}}} 
\newcommand{\sumtwo}[2]{\sum_{\substack{#1 \\ #2}}} 
\newcommand{\bbC}{{\ensuremath{\mathbb C}} }
\newcommand{\bbD}{{\ensuremath{\mathbb D}} }
\newcommand{\bbE}{{\ensuremath{\mathbb E}} }
\newcommand{\bbL}{{\ensuremath{\mathbb L}} }
\newcommand{\bbP}{{\ensuremath{\mathbb P}} }
\newcommand{\bbR}{{\ensuremath{\mathbb R}} }
\newcommand{\gep}{\varepsilon}       
\newcommand{\gO}{\Omega}
\newcommand{\gl}{\lambda}
\def\captionfont@{\footnotesize}
\def\captionheadfont@{\scshape}
\long\def\@makecaption#1#2{%
  \vspace{2mm}
  \setbox\@tempboxa\vbox{\color@setgroup
    \advance\hsize-6pc\noindent
    \captionfont@\captionheadfont@#1\@xp\@ifnotempty\@xp
        {\@cdr#2\@nil}{.\captionfont@\upshape\enspace#2}%
    \unskip\kern-6pc\par
    \global\setbox\@ne\lastbox\color@endgroup}%
  \ifhbox\@ne 
    \setbox\@ne\hbox{\unhbox\@ne\unskip\unskip\unpenalty\unkern}%
  \fi
  \ifdim\wd\@tempboxa=\z@ 
    \setbox\@ne\hbox to\columnwidth{\hss\kern-6pc\box\@ne\hss}%
  \else 
    \setbox\@ne\vbox{\unvbox\@tempboxa\parskip\z@skip
        \noindent\unhbox\@ne\advance\hsize-6pc\par}%
\fi
  \ifnum\@tempcnta<64 
    \addvspace\abovecaptionskip
    \moveright 3pc\box\@ne
  \else 
    \moveright 3pc\box\@ne
    \nobreak
    \vskip\belowcaptionskip
  \fi
\relax
}
\def\writefig#1 #2 #3 {\rlap{\kern #1 truecm
\raise #2 truecm \hbox{#3}}}
\title[Universality for subcritical Complex Gaussian Multiplicative Chaos]{A universality result for subcritical Complex Gaussian Multiplicative Chaos}
\author{Hubert Lacoin}
\address{
  IMPA, Institudo de Matem\'atica Pura e Aplicada, Estrada Dona Castorina 110
Rio de Janeiro, CEP-22460-320, Brasil. 
}
\begin{document}

 \begin{abstract}
  In the present paper, we show that (under some minor technical assumption) Complex Gaussian Multiplicative Chaos defined as the complex exponential of a $\log$-correlated Gaussian field can be obtained by taking the limit of the exponential of the field convoluted with a smoothing kernel. We consider two types of chaos: $e^{\gamma X}$ for a log correlated field $X$ and $\gamma=\alpha+i\beta$, $\alpha, \beta\in \bbR$ and  $e^{\alpha X+i\beta Y}$ for $X$ and $Y$ two independent fields with $\alpha, \beta\in \bbR$. Our result is valid in the range 
   $$ \mathcal P_{\mathrm{sub}}:=\{ \alpha^2+\beta^2<d \} \cup \{ |\alpha|\in (\sqrt{d/2},\sqrt{2d} ) \text{ and } |\beta|< \sqrt{2d}-|\alpha| \},$$
   which, up to boundary, is conjectured to be  optimal.
   \\[10pt]
  2010 \textit{Mathematics Subject Classification: 60F99,  	60G15,  	82B99.}\\
  \textit{Keywords: Random distributions, $\log$-correlated fields, Gaussian Multiplicative Chaos.}
 \end{abstract}

\maketitle

 \section{Introduction}

 \subsection{Real Gaussian Multiplicative Chaos and the question of universality}
 
The theory of Gaussian multiplicative chaos (GMC) developped was developped  by Kahane \cite{zbMATH03960673} with the objective of giving a rigourous meaning to random measures of the type 
\begin{equation}\label{GMC}
 e^{\gamma X(x)-\frac{\gamma^2}{2}\bbE[(X(x))^2]} \nu(\dd x)
\end{equation}
where $X$ is a $\log$-correlated Gaussian field, that is,
a Gaussian field with a covariance function of the form
\begin{equation}\label{ladefdeL}
K(x,y)= \log \frac{1}{|x-y|}+L(x,y)
\end{equation}
where $L$ is continuous function
and $\nu$ is a finite measure, both defined on a bounded measurable set $D\subset\bbR^d$, and $\gamma$ is a positive real number. 
For the sake of simplicity, we assume in our discussion that $\nu$ is absolutely continuous  with respect to Lebesgue and with bounded density ($\nu(\dd x)= \varrho(x)\dd x$ where $\varrho$ is a positive bounded function and $\dd x$ denotes Lebesgue measure).
Motivations to define a random distribution corresponding to \eqref{GMC} are plenty and come from various fields such as fluid mechanics (study of turbulence), quantitative finance and mathematical physics (Conformal Field Theory). We refer to \cite{RVreview} for a detailed account of applications.

\medskip

Let us quickly expose the reasons why giving a meaning to \eqref{GMC} poses a mathematical challenge.
As the kernel $K$ diverges on the diagonal, the field $X$ can be defined only as a random distribution (see Section \ref{leledef} below): the quantity $X(x)$ is not well defined, and one can only make sense of $X$ integrated along suitable test functions.
To give a meaning to \eqref{GMC}, a possibility (and this is the original idea of Kahane's construction in \cite{zbMATH03960673}) is to consider a sequence $(X_n(x))_{x\in D}$ of functional approximations converging to $X$ and to consider the limit  
\begin{equation}\label{GMClimit}
\lim_{n \to \infty} e^{\gamma X_n(x)-\frac{\gamma^2}{2}\bbE[(X_n(x))^2]} \nu(\dd x),
\end{equation}
as the definition of GMC.

\medskip

In \cite{zbMATH03960673}
this approximation approach is sucessfully applied with the additional assumption that $K$ can be written in the form 
$K(x,y)=\sum_{k=1}^{\infty} Q_k(x,y)$ where $Q_{k}$ is a sequence of bounded  positive definite function satisfying $Q_k(x,y)\ge 0$ for every $x,y$.
This assumption allows in particular to approximate $X$ by a martingale sequence, by defining  $X_n=\sum_{k=1}^n Y_k$ where $Y_k$ is a sequence of independent fields, with respective covariance kernels  $Q_k(x,y)$. 
Under this assumption, it is shown in \cite{zbMATH03960673} that the limit  \eqref{GMClimit} exists for all $\gamma \in \bbR $, 
is nontrivial when $\gamma\in (-\sqrt{2d},\sqrt{2d})$ (this range of parameter has been referred to as the subcritical phase of the GMC) and is equal to $0$ when $|\gamma| \ge \sqrt{2d}$.
The result of Kahane yields a couple of natural questions:
\begin{itemize}
 \item [(A)] Is the limit obtained a function of $X$ or does it depend on the extra information which is present in the sequence $(X_n)_{n\ge 1}$? 
 \item [(B)] Would one obtain the same limit for some other kind of approximation of $X$ (e.g.\ considering convolution of $X$ by a smooth kernel)?
\end{itemize}
A positive answer to both questions is necessary to establish without a doubt that the construction in \cite{zbMATH03960673} as the natural definition of \eqref{GMC}.

\medskip

Let us focus on $(B)$ which is the question of universality and has been the object of studies through several decades (an extensive account on this is given in \cite{NatEle}).
A statement concerning universality \textit{in law} was proved in  \cite{robertvargas}. More precisely, it was shown that if one approximates $X$ with convolution by a smooth kernel, then the sequence \eqref{GMC} converges in law and that the law of the limiting object is independent of the convolution kernel used in the proceedure.

\medskip

More recent works \cite{NatEle, Shamov}  (see also \cite{junnila2015uniqueness}) gave a full answer to the universality question. In \cite{Shamov}, an axiomatic definition of Gaussian Multiplicative Chaos which allows to uniquely define \eqref{GMC} without the need of an approximation is given (in a setup which is much more general than the one considered here), and it is furthermore shown that for any reasonable notion of approximating sequence $(X_n)$, the sequence random measures in Equation \eqref{GMClimit} converges in probability to the object given by this axiomatic definition.
In \cite{NatEle}, it is established via elementary computations that every convolution approximation of the field yields the same limit in probability and that this limit is identical to the one obtained with the martingale approximation by Kahane.

\medskip

Note that this positive answer to $(B)$ also entails that the Gaussian multiplicative chaos is indeed only a function of $X$, thus providing an answer to $(A)$.

\subsection{Complex Gaussian Multiplicative Chaos}

More recently, Gaussian Multiplicative Chaos has been considered in a complex setup, the idea being to give a rigourous meaning to  $e^{\gamma X(x)-\frac{\gamma^2}{2}\bbE[(X(x))^2]} \nu(\dd x)$  for complex values of $\gamma$ \cite{KupiainenActa,BJM10, junnila2019} (see also \cite{BJM102,DES93,HK15,HK18} where hierarchical versions of the model are considered).
A variant of this problem \cite{LRV15} is to consider two independent $\log$-correlated Gaussian fields $X$ and $Y$ and consider the measure
\begin{equation}\label{indrealindim}
e^{\alpha X(x)+i\beta Y(x)-\frac{\gamma^2}{2}\bbE[(X(x))^2]+\frac{\beta^2}{2}\bbE[(Y(x))^2]} \nu(\dd x).
\end{equation}
Complex Gaussian Multiplicative Chaos found applications in random geometry \cite{MS16}, in the study of log-gases \cite{LRV19}. It also has connections with the Ising model \cite{junnila2018}, the Riemann Zeta function and random matrices \cite{saksman2016}.
We refer to the references mentionned above for further details and motivation.

\medskip

The main objective of this work is to establish a result similar to the one in \cite{NatEle} for complex GMC. In the case of complex $\gamma$ it has been shown in \cite{junnila2019} under some regularity assumption for $L$ in \eqref{ladefdeL} (more details are given below) that the real GMC admits an analytic continuation in an open domain which includes the real segment $(-\sqrt{2d}, \sqrt{2d})$. The domain is explicit (given by \eqref{subcrit} see also Figure \ref{lafigure}) and is optimal, in the sense that there are very strong heuristic evidences that convergence to a non trival limit cannot hold outside of the closure of this open set.
What we establish in the present paper is that under the same assumption, the approximation obtained by convoluting the field with a smooth kernel converges to this universal object.

\medskip

Concerning the case of independent real and imaginary part 
\eqref{indrealindim}, the existence of the limit has been proved for some martingale approximation under some restriction on the kernel $K$ (existence of an integral decomposition, see \cite{LRV15}). In the present work, we prove convergence of the approximation by convolution with no additional assumption on $K$ besides the fact that it is log-correlated. 

\medskip

Before introducing our results in more details, we provide a short and comprehensive technical introduction to GMC in the real and complex setup.

\medskip

\section{Setup and results}

\subsection{Log-correlated fields and their regular convolutions} \label{leledef}

Given an open set $\cD\subset \bbR^d$.
Consider $K$ a positive definite kernel defined on $\cD^2$ of the form
\begin{equation}\label{lexpress}
K(x,y)= \log \frac{1}{|x-y|}+L(x,y)
\end{equation}
where $L$ is continuous function on $\cD$. By positive definite, we mean that 
\begin{equation}\label{treks}
 \int_{\cD^2} K(x,y) f(x)f(y)\dd x \dd y\ge 0
\end{equation}
 for every  continuous $f$ with compact support. 
Using the same formalism as in \cite{NatEle}, we define the field $X$ with covariance function $K$ as a random process indexed by a set of signed measure.  
We define $\cM^+_K$ to be the set of positive borel measures on $\cD$ such that 
\begin{equation}
 \int_{\cD^2}  |K(x,y)|\mu(\dd x) \mu(\dd y) <\infty
\end{equation}
and let $\cM_K$ be the space of signed measure spanned by $\cM^+_K$
\begin{equation}
 \cM_K:=\left\{ \mu_+- \mu_- \ : \ \mu_+, \mu_- \in \cM^+_K \right\}.
\end{equation}
We define $\hat K$ as the following quadratic form on  $\cM_K$  
\begin{equation}\label{hatK}
 \hat K(\mu,\mu')=
\int_{\cD^2}  K(x,y)\mu(\dd x) \mu'(\dd y).
\end{equation}
The assumption \eqref{treks} ensures that  $\hat K$ is positive definite, in the sense that for any 
finite collection of measures $(\mu_i)^k_{i=1}$ in $\cM_K$,  $\hat K(\mu_i,\mu_j)_{1\le i,j\le k}$ is a positive definite matrix.
Finally let $X= (\langle X, \mu \rangle)_{\mu\in \cM_K}$ be the centered Gaussian process indexed by $\cM_K$ with covariance function given by $\hat K$.
Note that from \eqref{lexpress}, $\cM_K$ contains all compactly supported continuous functions. With some abuse of notation we identify the measure $m(x)\dd x$ with function $m(x)$ and write 
\begin{equation}
 \int_{\cD} X(x) m(x)\dd x:= \langle X, m \rangle
\end{equation}
We want to consider now an approximation of $X$ obtained by convolution with a smooth kernel.
Consider $\theta$ a non-negative $C^{\infty}$ function whose compact support is included in the Euclidean ball of radius one, and such that $\int_{B(0,1)} \theta (x) \dd x=1.$
We define for $\gep\in (0,1]$,
$\theta_{\gep}:=\frac{1}{\gep^d} \theta( \gep^{-1}\cdot),$
{\red and 
\begin{equation}\label{cdgep}
 \cD_{\gep}:= \{ x\in \cD \ : \ \min_{y \in \bbR^d \setminus\cD} |y-x|> 2\gep \}
\end{equation}
(or $\cD_{\gep}=\bbR^d$ is $\cD=\bbR^d$)}.
We introduce the convoluted field $X_{\gep}$ indexed by $\cD_{\gep}$ by setting 
\begin{equation}
 X_{\gep}(x):= \int_{\cD} X(y)\theta_{\gep}(x-y)\dd y.
\end{equation}
With this definition on can check that $X_{\gep}(x)$ is a centered Gaussian field indexed by ${\red \bbD:= \bigcup_{\gep\in(0,1]} \{\gep\}\times\cD_{\gep}}$ with covariance function 
\begin{equation}\label{labig}
K_{\gep,\gep'}(x,y):=\bbE[X_{\gep}(x)X_{\gep'}(y)]=
\int_{(\bbR^d)^2} \theta_{\gep}(x-z_1) \theta_{\gep'}(y-z_2)
K(z_1,z_2)\dd z_1 \dd z_2.
\end{equation}
We simply write $K_{\gep}$ when $\gep=\gep'$, and $K_{\gep}(x)$ when $x=y$.
Finally 
$K_{\gep,\gep'}(x,y)$ is sufficiently regular (that is, both H\"older continuous in $x$ and $\gep$) to apply Kolmogorov criterion (see e.g.\  \cite[Theorem 2.9]{legallSto}).
Thus, in particular, there exists a version of the field which is jointly continuous in $\gep$ and $x$. In what follows we will always be considering this continuous version of the field.

\subsection{Gaussian multiplicative chaos in the complex case}

 Given $K$ satisfying \eqref{lexpress}, we consider $X$ a Gaussian field with covariance $K$, 
we consider
  $(X_{\gep}(x))_{(\gep,x)\in \mathbb D}$ a continuous version in $\gep$ and $x$ of the mollified field, and $\nu$ a locally finite Borel measure on $\cD$. 
 We define
the $\gep$-mollified Gaussian Multiplicative chaos associated with $X$ , $\nu$ and $\gamma=\alpha+i\beta \in \bbC$ by (recall that with our notation  $K_{\gep}(x)=\bbE\left[  (X_{\gep}(x))^2\right]$) as follows: 
For any function $f\in C_c(\cD)$  (continuous on $\cD$ with compact support)
we set 
\begin{equation}\label{lesssimple}
 M^{(\gamma)}_{\gep}(f)= \int_{\cD_{\gep}} e^{\gamma X_{\gep}(x)-\frac{\gamma^2}{2}K_{\gep}(x)}f(x)\nu( \dd x).
\end{equation}
{\red The restriction to $\cD_{\gep}$ not only ensures that $X_{\gep}(x)$ is well defined, but also avoids boundary effects to ensure integrability: $X_{\gep}(x)$ and  $K_{\gep}(x)$ are uniformly bounded on $\cD_{\gep}$. When the support of $f$ is included in $\cD_{\gep}$ we will, with a small abuse of notation, write  \eqref{lesssimple}  as an integral over $\cD$.}
A variant of the model with independent real and imaginary parts of the field in the exponential can also be considered.
Given $\alpha$ and $\beta$ two real numbers, $X$ and $Y$ two independent fields with covariance $K$ we set for  $f\in C_c(\cD)$
\begin{equation}\label{simplecase}
 M^{(\alpha,\beta)}_{\gep}(f):= \int_{\cD_{\gep}} e^{\alpha X_\gep(x)+i\beta Y_\gep(x)+\frac{\beta^2-\alpha^2}{2} K_{\gep}(x)} f(x) \nu(\dd x).
\end{equation}
We are interested in the limit when $\gep$ tends to zero of the quantities defined above.
More specifically we want to show that, within some range for the parameters $\alpha$ and $\beta$, $M_{\gep}$ converges to a non-trivial limit which does not depend on the convolution kernel $\theta$.
As mentionned above in the introduction,  such a result has been proved in the real case (when $\beta=0$ since $\gamma=\alpha$ simply write  $M^{(\alpha)}_{\gep}$). Let us mention this result as it is  found in \cite{NatEle}.
For the remainder of the paper we will assume that 
$\nu(\dd x)=\varrho(x)\dd x$ where $\varrho$ is a bounded measurable function on $\cD$ (note that  \cite{NatEle} allows for some flexibility on the choice of the measure $\nu(\dd x)$ but we have chosen to keep the setup as simple as possible here).

\begin{theorema}\label{darealcase}
Let $\alpha \in (-\sqrt{2d},\sqrt{2d})$ be a real number. 
Then
for $M^{(\alpha)}_{\gep}(f)$ defined as in \eqref{lesssimple}  we have
the following convergence in probability and in $\bbL_1$ for every $f\in C_c(\cD)$

\begin{equation}
   \lim_{\gep\to 0} M^{(\alpha)}_{\gep}(f)=M^{(\alpha)}_0(f).
\end{equation}
where $M^{(\alpha)}_0(f)>0$ almost surely if $f$ is non-negative and non uniformly zero.
 Furthermore the limit does not depend on the choice of  the smoothing kernel $\theta$.
\end{theorema}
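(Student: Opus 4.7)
The plan is to proceed in two stages---a direct second-moment computation for the $L^2$-phase $\alpha^2<d$, and a thick-point truncation for the full subcritical range $\alpha^2<2d$---with universality in $\theta$ handled by running the same analysis on cross expectations. In the $L^2$-phase one has $\bbE[M^{(\alpha)}_\gep(f)] = \int_{\cD_\gep} f(x)\,\nu(\dd x)$ and
$$
\bbE\bigl[M^{(\alpha)}_\gep(f)\,M^{(\alpha)}_{\gep'}(f)\bigr] = \int_{\cD_\gep\times\cD_{\gep'}} f(x)\,f(y)\,e^{\alpha^2 K_{\gep,\gep'}(x,y)}\,\nu(\dd x)\,\nu(\dd y),
$$
as a direct consequence of $\alpha(X_\gep(x)+X_{\gep'}(y))$ being Gaussian with variance $\alpha^2[K_\gep(x)+K_{\gep'}(y)+2K_{\gep,\gep'}(x,y)]$. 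As $\gep,\gep'\to 0$, $K_{\gep,\gep'}(x,y)\to K(x,y)$ pointwise off the diagonal (by continuity of $L$), while the integrand is dominated by $Cf(x)f(y)|x-y|^{-\alpha^2}$, which is integrable on $\cD^2$ iff $\alpha^2<d$. Dominated convergence identifies $(M^{(\alpha)}_\gep(f))_\gep$ as Cauchy in $L^2$, and applying the same identity to the cross-kernel built from two distinct mollifiers shows that the $L^2$-limit is $\theta$-independent.

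For the full subcritical range $\alpha^2<2d$ the second moment of $M^{(\alpha)}_\gep(f)$ itself diverges and I would introduce the thick-point truncation
$$
M^{(\alpha)}_{\gep,\eta}(f) := \int_{\cD_\gep} e^{\alpha X_\gep(x)-\frac{\alpha^2}{2}K_\gep(x)}\,\ind_{\{X_\gep(x)\leq(\alpha+\eta)K_\gep(x)\}}\,f(x)\,\nu(\dd x), \qquad \eta>0.
$$
The Cameron--Martin identity---weighting the law of $X$ by $e^{\alpha X_\gep(x)-\frac{\alpha^2}{2}K_\gep(x)}$ translates $X$ by $\alpha K_\gep(x,\cdot)$---turns each indicator into a Gaussian deviation event whose probability provides exactly enough decay near the diagonal to keep $\bbE[M^{(\alpha)}_{\gep,\eta}(f)^2]$ bounded uniformly in $\gep$ for every $\alpha^2<2d$, giving $L^2$-convergence of the truncated object. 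The complementary ``bad'' part is controlled via the same shift by
$$
\bbE\bigl[M^{(\alpha)}_\gep(f)-M^{(\alpha)}_{\gep,\eta}(f)\bigr] = \int_{\cD_\gep} f(x)\,\p\bigl(X_\gep(x)>\eta K_\gep(x)\bigr)\,\nu(\dd x) = O\bigl(\gep^{\eta^2/2}\bigr),
$$
using $K_\gep(x)\sim|\log\gep|$ and the standard Gaussian tail bound. Together these yield $L^1$-convergence of $M^{(\alpha)}_\gep(f)$ to a limit $M^{(\alpha)}_0(f)$.

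Universality is then obtained by applying the same second-moment analysis to the \emph{difference} $M^{(\alpha),1}_{\gep,\eta}-M^{(\alpha),2}_{\gep,\eta}$ of two mollified chaoses: the auto- and cross-covariance kernels share the same off-diagonal limit $K(x,y)$, so the second moment of the difference vanishes. Positivity of $M^{(\alpha)}_0(f)$ on non-negative non-trivial $f$ then follows from a Kolmogorov $0$--$1$ argument applied to the tail $\sigma$-algebra of $X$, combined with $\bbE[M^{(\alpha)}_0(f)]>0$. The main obstacle is the intermediate phase $\alpha^2\in[d,2d)$: the naive second moment blows up, the whole argument must route through the truncated objects, and the uniform-in-$\gep$ bound on $\bbE[M^{(\alpha)}_{\gep,\eta}(f)^2]$ relies on a delicate cancellation between the divergent factor $e^{\alpha^2 K_{\gep,\gep'}(x,y)}\sim|x-y|^{-\alpha^2}$ and the small Gaussian indicator probability---an estimate that uses the subcritical condition $\alpha^2<2d$ in a sharp way, with the factor $2$ coming precisely from the two indicators that appear in the second moment.
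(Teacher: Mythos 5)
The paper does not actually prove Theorem~\ref{darealcase}: it is imported from \cite{NatEle}, and only your first stage (the $\bbL_2$-phase second-moment computation) is reproduced here, in Section~\ref{l2sec}, where it matches your argument exactly. Your second stage is the right family of ideas --- a thick-point truncation plus a Cameron--Martin tilt is indeed how \cite{NatEle} proceeds, and how this paper adapts the argument to the complex case --- but the specific truncation you chose fails, and the failure is precisely at the step you flag as delicate.

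The gap is that your good event $\{X_\gep(x)\le(\alpha+\eta)K_\gep(x)\}$ constrains the field only at the finest scale $\gep$, whereas the decisive scale in the second-moment computation is $|x-y|$. Under the two-point tilt, $X_\gep(x)$ has mean $\alpha(K_\gep(x)+K_{\gep,\gep'}(x,y))\approx\alpha\bigl(\log\tfrac1\gep+\log\tfrac1{|x-y|\vee\gep}\bigr)$, so whenever $\alpha\log\tfrac1{|x-y|}\le\eta\log\tfrac1\gep$, i.e.\ $|x-y|\ge\gep^{\eta/\alpha}$, your threshold lies \emph{above} the tilted mean and the indicator probability is bounded below by a constant: it provides no decay. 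On that region the integrand is still of order $|x-y|^{-\alpha^2}$, and $\int_{|x-y|\ge\gep^{\eta/\alpha}}|x-y|^{-\alpha^2}\,\dd y$ is of order $\gep^{-\eta(\alpha^2-d)/\alpha}\to\infty$ when $\alpha^2>d$, so $\bbE[M^{(\alpha)}_{\gep,\eta}(f)^2]$ is \emph{not} uniformly bounded in the intermediate phase $d\le\alpha^2<2d$. The fix --- which is the real content of the truncation in \cite{NatEle} and the reason the events $A_{q,\gl}(x)$ in \eqref{defaqgl} impose $X_k(x)\le k\gl$ for \emph{all} $k\ge q$ --- is to truncate at every intermediate scale. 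Then at the scale $k_0=\log\tfrac1{|x-y|\vee\gep}$ the tilted mean of $X_{k_0}(x)$ is $\approx 2\alpha k_0$ (the factor $2$ comes from the two Cameron--Martin shifts, one rooted at $x$ and one at $y$ --- not from using both indicators, as you suggest), and a \emph{single} indicator with threshold $\gl k_0$, $\sqrt{2d}<\gl<2\alpha$, already yields the decay $(|x-y|\vee\gep)^{(2\alpha-\gl)^2/2}$ of Lemma~\ref{ladomine}, making $|x-y|^{(2\alpha-\gl)^2/2-\alpha^2}$ integrable throughout $\alpha^2<2d$. Your $\bbL_1$ control of the discarded part and your scheme for $\theta$-independence are sound once the truncation is repaired, but note that with the multi-scale event the complement is only small uniformly in $\gep$ (as the cutoff parameter grows), not vanishing as $\gep\to0$, so the concluding limit argument must be organized as in \cite{NatEle}, or as in this paper by showing that the truncated and untruncated objects coincide on an event of probability close to one.
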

Note that the range of parameter $\alpha$ considered above is optimal since it is known that when $|\alpha|\ge \sqrt{2d}$ we have $
    \lim_{\gep\to 0} M^{(\alpha)}_{\gep}=0,$
in probability (see e.g.\ \cite[Proposition 3.1]{robertvargas}).
In the complex setup, we are focusing on the so-called \textit{subcritical phase} which corresponds to the following range for the parameter $\alpha$ and $\beta$
\begin{equation}\label{subcrit}
 \mathcal P_{\mathrm{sub}}:=\{ \alpha^2+\beta^2<d \} \cup \{ |\alpha|\in (\sqrt{d/2},\sqrt{2d} ) \text{ and } |\beta|< \sqrt{2d}-|\alpha| \}.
\end{equation}
In words,  $\mathcal P_{\mathrm{sub}}$ is the convex envelope of the union of the ball of radius $\sqrt{d}$ and the segment $(-\sqrt{2d},\sqrt{2d})\times \{0\}$ (see Figure \ref{lafigure}).
Our aim is to extend Theorem \ref{darealcase} to the complex setup, in the subcritical case.

 \begin{figure}[ht]
\begin{center}
\leavevmode
\epsfxsize = 10 cm
\psfrag{d}{$\sqrt{d}$}
\psfrag{2d}{$\sqrt{2d}$}
\psfrag{beta}{$\beta$}
\psfrag{alpha}{$\alpha$}
\epsfbox{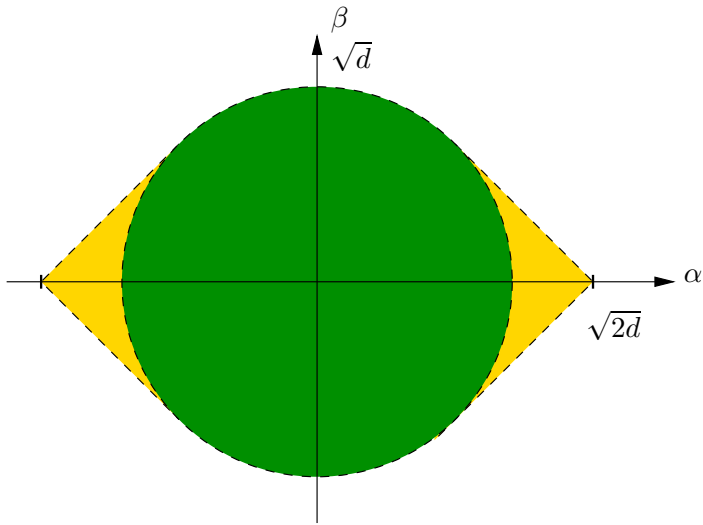}
\end{center}
\caption{\label{lafigure} 
The domain $\cP_{\mathrm{sub}}$. The green (dark) region $\alpha^2+\beta^2<d$ corresponds to the $\bbL_2$ region for which the proof of convergence is relatively straighforward (see Section \ref{l2sec}).
The yellow (lighter) region corresponds to the zone where a more advanced proof is required (and presented in Sections \ref{secdecoco} and \ref{complexgamma})}
\end{figure}

\medskip

\noindent Let us mention that when $(\alpha,\beta)\in \mathcal P_{\mathrm{sub}}$ (and under some additional assumption on the kernel $K$) the existence of a random distribution corresponding the formal expressions
\begin{equation}
  e^{\alpha  X(x)+ i\beta Y(x)+\frac{\beta^2-\alpha^2}{2}\bbE[(X(x))^2]} \nu(\dd x) \quad  \text{ and } \quad  e^{\gamma X(x)-\frac{\gamma^2}{2}\bbE[(X(x))^2]} \nu(\dd x)
\end{equation}
was established in
in \cite{LRV15} and \cite{junnila2019} respectively. In both cases, the construction relies on a martingale approximation of the field $X$ similar to Kahane's construction. What we establish in the present paper is that any convolution approximation of the field yields the same object in the limit.

\subsection{Results}

\subsubsection*{Convergence of $M^{(\alpha,\beta)}_{\gep}(f)$}

\begin{theorem}\label{decoco}
If $(\alpha,\beta)\in \mathcal P_{\mathrm{sub}}$, $f\in C_c(\cD)$ and
 $M^{(\alpha,\beta)}_{\gep}(f)$ is defined as in \eqref{simplecase}, then the following limit exists in probability and in $\bbL_1$ 
 \begin{equation}
  \lim_{\gep\to 0} M^{(\alpha,\beta)}_{\gep}(f)=M^{(\alpha,\beta)}_0(f).
 \end{equation}
Furthermore the limit does not depend on the choice of the smoothing kernel $\theta$.
\end{theorem}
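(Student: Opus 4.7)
The plan is to split the proof by the two sub-regions of $\cP_{\mathrm{sub}}$: the $\bbL^2$ disk $\{\alpha^2+\beta^2<d\}$, and the complementary "wings" where $|\alpha|+|\beta|<\sqrt{2d}$ and $|\alpha|>\sqrt{d/2}$. In both cases, convergence in probability/$\bbL^1$ is proved by showing the Cauchy property as $\gep\to 0$, and $\theta$-independence by running the same argument for two kernels and comparing cross-covariances.

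\emph{$\bbL^2$ region.} Here I aim for convergence in $\bbL^2$ via a direct second-moment computation. Independence of $X,Y$ and the Gaussian Laplace transform give
\begin{equation}\label{propeq:cov}
\bbE\!\left[M^{(\alpha,\beta)}_\gep(f)\,\overline{M^{(\alpha,\beta)}_{\gep'}(f)}\right]=\int_{\cD^2} e^{(\alpha^2+\beta^2)K_{\gep,\gep'}(x,y)}\,f(x)\overline{f(y)}\,\varrho(x)\varrho(y)\,dx\,dy.
\end{equation}
Expanding $\bbE[|M^{(\alpha,\beta)}_\gep(f)-M^{(\alpha,\beta)}_{\gep'}(f)|^2]$ via \eqref{propeq:cov} for the pairs $(\gep,\gep),(\gep',\gep'),(\gep,\gep')$, the integrand vanishes pointwise off the diagonal (by continuity of $K$) and is dominated by $C|x-y|^{-(\alpha^2+\beta^2)}\in\bbL^1(\cD^2)$. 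Dominated convergence gives the Cauchy property, and the same identity applied with two kernels $\theta,\theta'$ shows the limits agree almost surely.

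\emph{Beyond $\bbL^2$.} In the wings the second moment diverges, so I would use a mesoscopic coarse-graining with a barrier truncation. Pick a scale $\gep_0\in(\gep,1]$ and decompose $X=\bar X_{\gep_0}+\tilde X_{\gep_0}$, $Y=\bar Y_{\gep_0}+\tilde Y_{\gep_0}$ into independent Gaussian fields, with coarse parts of variance $\bar K_{\gep_0}\asymp\log(1/\gep_0)$ and residuals log-correlated below scale $\gep_0$. Choose a threshold $\alpha_*\in(\alpha\vee\sqrt{d/2},\sqrt{2d}-|\beta|)$, set $A_{\gep_0}(x):=\{\bar X_{\gep_0}(x)\leq\alpha_*\bar K_{\gep_0}(x)\}$, and define the truncated chaos $M^{\good}_\gep(f)$ by inserting $\ind_{A_{\gep_0}(x)}$ into \eqref{simplecase}. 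Two steps: (a) for fixed $\gep_0$, $M^{\good}_\gep(f)$ is Cauchy in $\bbL^2$ as $\gep\to 0$ --- conditioning on the coarse fields and using the truncation bound $e^{\alpha\bar X_{\gep_0}}\leq e^{\alpha\alpha_*\bar K_{\gep_0}}$ reduces the task to a controlled second-moment computation for the fine chaos, whose singular kernel $e^{(\alpha^2+\beta^2)\tilde K_{\gep_0}(x,y)}$ is regularized at scale $\gep_0$; (b) $\bbE[|M^{(\alpha,\beta)}_\gep(f)-M^{\good}_\gep(f)|]\to 0$ uniformly in $\gep$ as $\gep_0\to 0$, via a Cameron--Martin shift $X\mapsto X+\alpha K(\cdot,x)$ that converts the bad-event contribution into a Gaussian tail $\bbP(\bar X_{\gep_0}(x)>(\alpha_*-\alpha)\bar K_{\gep_0}(x))\lesssim \gep_0^{(\alpha_*-\alpha)^2/2}$, which must beat a divergent prefactor of order $\gep_0^{-\beta^2/2}$ coming from the modulus bound $|e^{\alpha X_\gep+i\beta Y_\gep+\frac{\beta^2-\alpha^2}{2}K_\gep}|=e^{\alpha X_\gep-\frac{\alpha^2}{2}K_\gep}\,e^{\frac{\beta^2}{2}K_\gep}$. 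The double limit $\gep\to 0$ then $\gep_0\to 0$ yields Cauchy in probability; $\bbL^1$ convergence follows by uniform integrability from a standard $\bbL^p$ bound ($p>1$) for complex GMC in the subcritical phase.

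\emph{Main obstacle.} The crux is to verify that the two constraints on $\alpha_*$ --- large enough for the truncation to make the coarse factor integrable against the fine kernel in (a), small enough that $\alpha_*<\sqrt{2d}-|\beta|$ and the tail in (b) dominates the $\beta^2$-divergence --- are simultaneously feasible exactly over $\cP_{\mathrm{sub}}$. Delivering (b) at the right threshold requires replacing the crude $\bbL^1$ estimate by a second-moment computation on the bad event, so that the effective constraint becomes $(\alpha_*-\alpha)^2>\beta^2-\text{correction}$ rather than a strictly stronger inequality: this is the main technical difficulty and is where the sharp shape of $\cP_{\mathrm{sub}}$ emerges from the proof.
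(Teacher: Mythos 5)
Your $\bbL^2$-disk argument is exactly the paper's (Proposition \ref{lecasl2}), and the overall shape of your plan in the wings --- truncate, prove $\bbL^2$-convergence of the truncated chaos, then control the discrepancy --- is also the paper's. But both halves of your implementation break down, for reasons the paper explicitly identifies. In step (a), your barrier acts at a \emph{single} mesoscopic scale $\gep_0$, so after the Cameron--Martin shift the indicator contributes a factor $\tilde\bbP\bigl(A_{\gep_0}(x)\cap A_{\gep_0}(y)\bigr)$ that is essentially constant in $|x-y|$ once $|x-y|\ll\gep_0$; it cannot regularize the singularity $e^{(\alpha^2+\beta^2)K_{\gep,\gep'}(x,y)}\asymp|x-y|^{-(\alpha^2+\beta^2)}$, which is non-integrable throughout the wings. (Likewise, the fine chaos below scale $\gep_0$ is regularized at scale $\gep$, not $\gep_0$, so its second moment also diverges as $\gep\to0$.) The paper's barrier runs over \emph{all} scales $k\ge q$, and that is what produces the decaying bound $\tilde\bbP_{\gep,\gep',x,y}(A_q(x,y))\le C_q(|x-y|\vee\gep)^{(2\alpha-\gl)^2/2}$ of Lemma \ref{ladomine}, with the exponent calibrated by \eqref{surlambda} to beat $\alpha^2+\beta^2$.

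In step (b), the divergent modulus factor is $e^{\frac{\beta^2}{2}K_\gep(x)}\asymp\gep^{-\beta^2/2}$, not $\gep_0^{-\beta^2/2}$: for fixed $\gep_0$ your estimate of $\bbE[|M^{(\alpha,\beta)}_\gep(f)-M^{\good}_\gep(f)|]$ is of order $\gep_0^{(\alpha_*-\alpha)^2/2}\,\gep^{-\beta^2/2}$, which blows up as $\gep\to0$; there is no uniformity in $\gep$, and no choice of $\alpha_*$ rescues it. Your proposed repair (a second-moment computation on the bad event) fares no better: under the tilt the bad event at scale $\gep_0$ has probability bounded away from zero (its tilted mean is $\approx 2\alpha\log(1/\gep_0)>\alpha_*\log(1/\gep_0)$ since $\alpha_*<\sqrt{2d}<2\alpha$), so that integral diverges for the same reason as in step (a). This is precisely the obstruction the paper singles out: in the complex case one cannot show that truncated and untruncated chaoses are close in $\bbL^1$. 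The paper instead arranges \emph{exact equality}: since $\gl>\sqrt{2d}$ exceeds the growth rate of $\sup_x X_k(x)$, the all-scales event $\cA_{q,\gl}(f)$ has probability tending to $1$ as $q\to\infty$ (Proposition \ref{lapropo}), and on it $M^{(\alpha,\beta)}_\gep(f)=M^{(\alpha,\beta)}_{\gep,q}(f)$ for \emph{every} $\gep$ simultaneously; convergence in probability then follows by decomposing over the a.s.\ finite random level $\mathfrak q$. Without this device (or an equivalent one) your scheme does not close, and the sharp shape of $\cP_{\mathrm{sub}}$ emerges not from your constraint on $\alpha_*$ but from the compatibility of $\gl>\sqrt{2d}$ with $d+\tfrac{(2\alpha-\gl)^2}{2}>\alpha^2+\beta^2$.
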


\noindent Note that the convergence in $\bbL_1$ implies that 
\begin{equation}
 \bbE \left[ M^{(\alpha,\beta)}_{0}(f)\right]= \lim_{\gep\to 0} \bbE\left[ M^{(\alpha,\beta)}_{\gep}(f) \right]=\int_{\cD} f(x) \varrho(x) \dd x,
\end{equation}
which indicates that the limit is non trivial.

\subsubsection*{Convergence of $M^{(\gamma)}_{\gep}(f)$}

In the case of a single complex parameter $\gamma$, we require and extra regularity assumption on $K$ (which comes from \cite{junnila2019}).
More specifically we are going to assume that $K$ can be written in the form \eqref{lexpress} 
where the function $L$ belong to the local Sobolev space 
$H^{s}_{\mathrm{loc}}(\cD\times \cD)$ for some $s>d$. 
For $k\ge 1$, the Sobolev space  $H^{s}(\bbR^k)$  is the Hilbert space associated with the norm
\begin{equation}
\|\varphi \|_{H^s(\bbR^k)}:=  \left(\int_{\bbR^k} (1+|\xi|^2)^s |\hat \varphi(\xi)|^2 \dd \xi < \infty \right)^{1/2},
\end{equation}
where $\hat \varphi$ denotes the Fourier transform of $\varphi$ defined for smooth functions by
\begin{equation}
 \hat \varphi(\xi):= \int_{\bbR^k} e^{i \xi. x} \varphi(x) \dd x.
\end{equation}
For an open set $U\subset \bbR^k$,  $H^{s}_{\mathrm{loc}}(U)$ denotes the set function which belongs to  $H^{s}(\bbR^k)$ after multiplication by an arbitrary smooth function with compact support
\begin{equation}
H^{s}_{\mathrm{loc}}(U):= \left\{  \varphi : U \to \bbR  \ | \ \forall \rho\in C^{\infty}_c(U), \  \rho\varphi\in
 H^{s}(\bbR^k)  \right\},
\end{equation}
where with some abuse of notation, $\rho\varphi$ is identified with its extension by zero on $\bbR^k$.
\begin{theorem}\label{coco}
Assuming that $K$ is of the form \eqref{lexpress} for a function 
$L\in H^{s}_{\mathrm{loc}}(\cD\times \cD)$, $s>d$.
If $(\alpha,\beta)\in \mathcal P_{\mathrm{sub}}$, $\gamma=\alpha+i\beta$, $f\in C_c(\cD)$ and 
$M^{(\gamma)}_{\gep}(f)$ is defined as in \eqref{lesssimple}, the following limit exists in probability and in $\bbL_1$
 
 \begin{equation}
  \lim_{\gep\to 0} M^{(\gamma)}_{\gep}(f)=M^{(\gamma)}_0(f).
 \end{equation}
 Furthermore the limit does not depend on the choice of the smoothing kernel $\theta$.
\end{theorem}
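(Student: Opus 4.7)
The strategy is to leverage the Sobolev regularity of $L$ to decompose the field $X$ into a ``universal'' log-correlated part plus a smooth remainder, and then to reduce the convergence problem on the universal part to a situation analogous to the independent-fields case handled by Theorem \ref{decoco}. Since $L \in H^s_{\mathrm{loc}}(\cD\times\cD)$ with $s>d$, $L$ is continuous by Sobolev embedding, and a standard Fourier/spectral argument (in the spirit of \cite{junnila2019}) shows that on a neighborhood of $\mathrm{Supp}(f)$ one can write $X = X' + X''$ where $X'$ and $X''$ are independent centered Gaussian fields, $X'$ has a kernel of the form $\log(1/|x-y|) + L'(x,y)$ with $L'$ smooth and positive definite and admitting an explicit integral (scale) decomposition, and $X''$ is a H\"older continuous Gaussian field on the same neighborhood.

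With this decomposition the chaos factorizes: writing $W_{\gep}(x) := e^{\gamma X''_{\gep}(x) - \frac{\gamma^2}{2} K''_{\gep}(x)}$ and $K'_{\gep}$ for the covariance of $X'_{\gep}$, one has
\begin{equation*}
M^{(\gamma)}_{\gep}(f) = \int_{\cD_{\gep}} f(x) \varrho(x) W_{\gep}(x) \, e^{\gamma X'_{\gep}(x) - \frac{\gamma^2}{2} K'_{\gep}(x)} \dd x.
\end{equation*}
Because $X''$ is H\"older continuous, $W_{\gep}$ converges almost surely and uniformly on the support of $f$ to a bounded continuous random function $W$. Hence the convergence of $M^{(\gamma)}_{\gep}(f)$ reduces to convergence of a mollified chaos associated with the universal field $X'$, tested against the random continuous function $g_\gep := f \varrho W_\gep$ (which one handles by conditioning on $X''$).

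For the universal field $X'$, the plan is to run the same two-step analysis as in the proof of Theorem \ref{decoco}. In the inner $\bbL_2$-region $\{\alpha^2+\beta^2<d\}$, one computes the variance $\bbE[|M^{(\gamma),\theta_1}_{\gep}(f) - M^{(\gamma),\theta_2}_{\gep'}(f)|^2]$ directly and shows it vanishes as $\gep,\gep'\to 0$ using the fact that $K'_{\gep,\gep'}(x,y)$ converges to $K'(x,y)$ off the diagonal and that the diagonal contribution is integrable; this simultaneously yields convergence, the Cauchy property across smoothing parameters, and independence of the kernel $\theta$. In the yellow region one truncates the field on the event $\{X'_{\gep}(x) \leq A\log(1/\gep)\}$ for a suitable threshold $A$ and works with a fractional moment $p\in (1,d/\alpha^2)$; the contribution of the bad event is controlled by a first-moment estimate that exploits the oscillation coming from the imaginary exponential $e^{i\beta X'_{\gep}}$. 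Universality (independence of $\theta$) is then obtained by a Gaussian interpolation between two kernels $\theta_1,\theta_2$, differentiating $t\mapsto \bbE[|M^{(\gamma),t}_{\gep}(f)|^p]$ and using Gaussian integration by parts.

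The main obstacle, compared with Theorem \ref{decoco}, is the absence of independence between the modulus $e^{\alpha X_{\gep}}$ and the phase $e^{i\beta X_{\gep}}$: both are driven by the same field $X$, and one can no longer condition on one to analyze the other. The Sobolev decomposition above handles part of this correlation by peeling off the regular part $X''$, but within the universal field $X'$ the correlation persists. The key technical point is to re-do the yellow-region fractional-moment estimates from the proof of Theorem \ref{decoco} in this correlated setting, using the scale decomposition of $X'$ to simulate the ``independence across scales'' that in the independent-fields case came from the independence of $X$ and $Y$; in particular one must keep track of the extra deterministic phase factor $e^{-i\alpha\beta K'_{\gep}(x)}$ and of cross-correlations between real and imaginary parts of $\gamma X'_{\gep}$, which is where I expect the principal work to lie.
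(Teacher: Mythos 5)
Your high-level skeleton is the right one and matches the paper: use the $H^s_{\mathrm{loc}}$ hypothesis (via \cite[Theorem 4.5]{junnila2019}, Proposition \ref{dapopo} here) to obtain, locally around each point of $\Supp(f)$, a decomposition of $K$ into a H\"older positive definite part plus a sum of compactly supported scale kernels $Q_n$, reduce to this decomposable case by a partition of unity, then truncate and prove $\bbL_2$ convergence of the truncated chaos. But the way you propose to handle the truncation error and the yellow region contains genuine gaps. First, your truncation $\{X'_{\gep}(x)\le A\log(1/\gep)\}$ is a single-scale, $\gep$-dependent event, so the truncated and untruncated chaoses differ on an event that changes with $\gep$, and you must then estimate that difference in some $\bbL_p$. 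Your plan to do this ``by a first-moment estimate that exploits the oscillation coming from the imaginary exponential'' cannot work: a first-moment bound passes through $|e^{\gamma X_{\gep}-\frac{\gamma^2}{2}K_{\gep}}|=e^{\alpha X_{\gep}-\frac{\alpha^2-\beta^2}{2}K_{\gep}}$, whose expectation is $e^{\frac{\beta^2}{2}K_{\gep}(x)}\to\infty$; taking moduli destroys exactly the oscillation you would need. This is the obstruction the paper singles out at the start of Section \ref{secdecoco}, and it is circumvented there by making the truncation \emph{multiscale and $\gep$-independent}: one couples $X$ with an auxiliary martingale-type field $Y_n$ (covariance $\sum_{k\le n}Q_k$) and truncates on $\cA_{q,\gl}=\{\forall k\ge q,\ \sup_x Y_k(x)\le k\gl\}$, so that $M^{(\gamma)}_{\gep,q}$ coincides \emph{exactly} with $M^{(\gamma)}_{\gep}$ for all $\gep$ simultaneously on an event of probability tending to $1$; no $\bbL_1$ estimate of a difference is ever made. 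Second, your fractional-moment range $p\in(1,d/\alpha^2)$ is empty once $\alpha\ge\sqrt{d}$, so it cannot cover the yellow region $|\alpha|\in(\sqrt{d/2},\sqrt{2d})$; the paper instead proves the truncated object is Cauchy in $\bbL_2$ for the whole subcritical range.

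Beyond these two points, the part you defer as ``where I expect the principal work to lie'' is in fact the heart of the proof and is not a routine adaptation of the independent-fields case. After the Cameron--Martin tilt by the modulus, one must bound $\bbE[:e^{i\beta(X_{\gep}(x)-X_{\gep'}(y))}:\ind_{\tilde A_{q,\gep,\gep'}}]$ by $C(|x-y|\vee\gep)^{(2\alpha-\gl)^2/2}$ so that the second-moment integrand is dominated by $|x-y|^{(2\alpha-\gl)^2/2-(\alpha^2+\beta^2)}$, which is integrable by the choice \eqref{surlambda}. A naive Cauchy--Schwarz or Jensen step pays $e^{\beta^2 n}$ where one can only afford $e^{\beta^2 n/2}$. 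The paper's solution is to decompose the truncation indicator according to the first scale at which the constraint is violated at $x$ and at $y$ (events $A^{(\gep,\gep')}_{n_0}$, $B^{(\gep,\gep')}_{n,j}$, $C^{(\gep,\gep')}_{n,m}$ in \eqref{ladecompa}) and, for each piece, to condition on a mixed $\sigma$-algebra $\cG_{n,m}=\cF_n\vee\sigma(Y_l(y),\,n<l\le m)$ chosen so that the conditional variance of $X_{\gep}(x)-X_{\gep'}(y)$ grows only like $m-n_0$ (resp.\ $n+m-2n_0$), yielding the correct $\beta^2/2$ factor. Without an argument of this type the domination fails and dominated convergence cannot be applied, so this step needs to be supplied, not just flagged.
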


\subsubsection*{Convergence as distributions}

{\red Theorem \ref{coco} and \ref{decoco} concerns the convergence of the chaos integrated over a  function $f$ considered as a random variable. It is possible to go further and prove that 
  $M^{(\alpha,\beta)}_{\gep}$ and $M^{(\gamma)}_{\gep}$
converge towards a limiting random  distribution.
The convergence holds in a Sobolev space of negative index, and in particular this means that a priori not every continuous function can be integrated against the limit $M^{(\gamma)}_0$.

\begin{theorem}\label{asadistrib}
If the assumptions of Theorem  \ref{decoco} or \ref{coco}  are satisfied then  for any $u>d/2$,
 $M^{(\alpha,\beta)}_{\gep}$ and $M^{(\gamma)}_{\gep}$
 converge in the local Sobolev space $H^{-u}_{\mathrm{loc}}(\cD)$ towards respective limiting distribution $M^{(\alpha,\beta)}_{0}$ and  $M^{(\gamma)}_{0}$.
 These convergences holds in probability.

\end{theorem}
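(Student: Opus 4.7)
By definition of the topology on $H^{-u}_{\mathrm{loc}}(\cD)$, the problem reduces, for each fixed $\chi\in C^\infty_c(\cD)$, to proving that $\chi M_\gep$ converges in $H^{-u}(\bbR^d)$ in probability. The limit is then identified with $\chi M_0$ by testing against $f\in C_c(\cD)$ and invoking Theorem~\ref{decoco} or \ref{coco}. The central representation is Plancherel's identity,
\[
\|\chi T\|_{H^{-u}(\bbR^d)}^2 \;\asymp\; \int_{\bbR^d}(1+|\xi|^2)^{-u}\bigl|T(f_\xi)\bigr|^2\,d\xi \;=\; \iint_{\bbR^d\times\bbR^d} G_u(x-y)\,T(dx)\,\overline{T(dy)},
\]
where $f_\xi(x)=\chi(x)e^{-i\xi\cdot x}$ and $G_u$ is the Bessel kernel (inverse Fourier transform of $(1+|\xi|^2)^{-u}$), which for $u>d/2$ is bounded and continuous. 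Each $f_\xi$ belongs to $C_c(\cD)$, so Theorem~\ref{decoco} or \ref{coco} gives $M_\gep(f_\xi)\to M_0(f_\xi)$ in $\bbL_1$ for every $\xi$.

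In the $\bbL_2$-regime $\alpha^2+\beta^2<d$ the proof is a direct second-moment computation. Applying the identity to $T=\chi(M_\gep-M_{\gep'})$ and taking expectation using $\bbE[F_\gep(x)\overline{F_{\gep'}(y)}]=e^{|\gamma|^2 K_{\gep,\gep'}(x,y)}$ (the same formula, with exponent $\alpha^2+\beta^2=|\gamma|^2$, holds in the two-field model $M^{(\alpha,\beta)}$ after a short Gaussian computation), the integrand
\[
G_u(x-y)\chi(x)\chi(y)\rho(x)\rho(y)\bigl[e^{|\gamma|^2 K_\gep(x,y)}+e^{|\gamma|^2 K_{\gep'}(x,y)}-e^{|\gamma|^2 K_{\gep,\gep'}(x,y)}-e^{|\gamma|^2 K_{\gep',\gep}(x,y)}\bigr]
\]
tends pointwise off the diagonal to $0$ as $\gep,\gep'\to 0$ (by continuity of the convolved covariances in $\gep$) and is dominated by $C(|x-y|\vee\gep\vee\gep')^{-(\alpha^2+\beta^2)}$, integrable on $\mathrm{supp}(\chi)^2$ when $\alpha^2+\beta^2<d$. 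Dominated convergence yields Cauchy in $L^2(\Omega;H^{-u})$, hence convergence in probability.

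Outside the $\bbL_2$-regime the expected squared $H^{-u}$-norm of $\chi M_\gep$ is no longer uniformly bounded in $\gep$, so the direct second moment approach fails. The plan is to combine (i) the $\bbL_1$-convergence $M_\gep(f_\xi)\to M_0(f_\xi)$ for each $\xi$, with (ii) a tightness argument for $(\chi M_\gep)_\gep$ in $H^{-u}(\bbR^d)$, and conclude by extracting almost-sure subsequences. Tightness follows from the compact embedding $H^{-u+\delta}\hookrightarrow H^{-u}$ (on the compactly supported subspace, for any $\delta>0$) together with a uniform bound of the form $\sup_\gep\bbE\|\chi M_\gep\|_{H^{-u+\delta}}^q\leq C$ for some $q\in(1,2)$ admissible throughout $\cP_{\mathrm{sub}}$. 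Such a bound can be extracted from the scale-decomposition estimates developed in Sections~\ref{secdecoco}--\ref{complexgamma} to prove Theorems~\ref{decoco}, \ref{coco}, by applying them to the oscillatory family $\{f_\xi:\xi\in\bbR^d\}$ and integrating against the weight $(1+|\xi|^2)^{-u+\delta}$ (integrable for $u-\delta>d/2$).

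The principal obstacle is therefore the general subcritical case. Beyond the $\bbL_2$-regime, one must verify that the $L^q$-moment Cauchy estimates on $M_\gep(f)$ produced by the scale decomposition depend on the test function $f$ only through $\|f\|_\infty$ and $\mathrm{supp}(f)$, rather than through any higher regularity of $f$; only then do they pass uniformly in $\xi$ to the highly oscillatory exponentials $f_\xi(x)=\chi(x)e^{-i\xi\cdot x}$. This is the key technical point: once the bounds are shown to be of the right uniform type, an application of dominated convergence in the $\xi$-integral closes the argument.
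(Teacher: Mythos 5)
Your reduction to $\chi M_\gep$ in $H^{-u}(\bbR^d)$, the Plancherel representation against the oscillatory family $f_\xi(x)=\chi(x)e^{-i\xi\cdot x}$, and the direct second-moment argument in the regime $\alpha^2+\beta^2<d$ all match the paper's proof in Section~\ref{tightness}, and the "key technical point" you isolate at the end --- that the estimates must depend on the test function only through $\|f\|_\infty$ and $\mathrm{supp}(f)$ so that they survive the oscillation of $f_\xi$ --- is exactly the observation the paper exploits. The gap is in how you handle the region outside the $\bbL_2$ phase. You correctly note that the naive second moment of the untruncated chaos blows up there, but you then route the argument through tightness via the compact embedding $H^{-u+\delta}\hookrightarrow H^{-u}$ plus a uniform bound $\sup_\gep\bbE\|\chi M_\gep\|_{H^{-u+\delta}}^q\le C$ for some $q\in(1,2)$, asserted to be "extractable" from Sections~\ref{secdecoco}--\ref{complexgamma}. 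This is not carried out and is not a routine extraction: those sections produce $\bbL_2$ estimates for the \emph{truncated} chaos $M_{\gep,q}$, not $L^q$ moment bounds with $q<2$ for the untruncated one, and uniform fractional moment bounds for complex GMC throughout $\cP_{\mathrm{sub}}$ are a delicate matter that your sketch simply postulates. Moreover, even granting tightness, "extracting almost-sure subsequences" from $\bbL_1$-convergence of the pairings $M_\gep(f_\xi)$ only identifies subsequential limits; to get convergence in probability in $H^{-u}$ you would still need a Cauchy-in-probability argument (e.g.\ via joint tightness of pairs $(\chi M_\gep,\chi M_{\gep'})$), which you do not spell out.

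The idea you are missing is that the second-moment approach \emph{does} work in the whole subcritical phase, provided it is applied to the truncated object. The paper defines $\hat M^{(\gamma,\rho,q)}_\gep(\xi)$ by inserting the indicator $\ind_{A_{q,\gl}(x)}$ of \eqref{defaqq} into the Fourier integral, and observes that the bound of Proposition~\ref{ladomine2} (via \eqref{lootz}) controls $\bbE[|\hat M^{(\gamma,\rho,q)}_\gep(\xi)-\hat M^{(\gamma,\rho,q)}_{\gep'}(\xi)|^2]$ by a quantity involving only $|\rho(x)\rho(y)|$, hence uniformly in $\xi$; combined with the pointwise-in-$\xi$ Cauchy property from Proposition~\ref{secondacase} and the integrability of constants against $(1+|\xi|^2)^{-u}\dd\xi$ for $u>d/2$, dominated convergence gives $L^2(\gO\times\bbR^d)$-convergence of the truncated Fourier transforms. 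The truncation is then removed in probability using Propositions~\ref{lapropo} and~\ref{lapropo2}, which supply an a.s.\ finite random $\mathfrak q$ beyond which truncated and untruncated chaoses coincide. Without this truncation step (or a genuine proof of your uniform $L^q$ bound), your argument does not close.
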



}

\begin{rem}\label{wopopop}
 While in \cite{junnila2019}, the complex GMC is not obtained using smoothing kernels, it is worthwhile mentionning that the limit defined above coincides with the complex GMC constructed in  \cite{junnila2019}. This follows from the uniqueness of the limit on the real line and analyticity in $\gamma$ (we refer to \cite{junnila2019} for details on how to prove analyticity). 
 In the same manner, the limit presented in Theorem \ref{decoco} coincides with the one defined \cite[Theorem 3.1]{LRV15}. Some details about this last point are given in Section \ref{nodepend}.
\end{rem}

\begin{rem}
{\red While most examples of log-correlated fields considered in the litterature satisfy it, the assumption $L\in H^{s}_{\mathrm{loc}}(\cD)$ for some $s>d$ is a genuine restriction. There exist positive kernels of the form \eqref{ladefdeL}, for which $L\notin H^{s}_{\mathrm{loc}}(\cD)$
for any $s>d/2$. This is the case of the kernel $K$ defined by 
$$K(x,y):=\int^{\infty}_0 (1+t^{-2}) \kappa(e^t|x-y|)\dd t.$$
with $\kappa$ as in \eqref{defkappa}. 
}
\end{rem}

\subsection{Possible extensions of the result, open problems and related work}

We have chosen to keep the setup as simple as can be for the ease of the exposition but let us mention here some small extension that can be obtained with only minor modifications in the proof.

\subsubsection*{Correlated real and imaginary part}
In \cite{ HK15, HK18, KK14} the case of multiplicative cascades with correlated real and imaginary part is also considered.
In our context this corresponds to considering 
$X$ and $Y$ with covariance $K$ and such that the covariance between $X$ and $Y$ is given by $p K$ for some fixed $p\in (-1,1)$. That is
$$\bbE \left[\langle X, \mu \rangle  \langle Y, \mu '\rangle \right]:= p \int K(x,y) \dd \mu(\dd x) \dd \mu'(\dd y).  $$
In that case, the tecniques we develop for the proof of Theorem \ref{coco} (in Section \ref{complexgamma}) fully adapts (without any need for change) under the same assumption for $K$ (that is $L \in H^s_{\mathrm{loc}}(\cD)$ for some $s>d$).
{\red
In fact we do not require $X$ and $Y$ to have the same marginal law.
The most general case that can be treated without substancial modification to the proof is 
covariance of the form
\begin{equation}
\begin{split}
  \bbE[X(x)X(y)]&=K_1(x,y)= \log \frac{1}{|x-y|} +L_1(x,y),\\
  \bbE[Y(x)Y(y)]&=K_2(x,y)= \log \frac{1}{|x-y|} +L_2(x,y),\\
  \bbE[X(x)Y(y)]&=K_3(x,y)=p\log \frac{1}{|x-y|} +L_3(x,y).
\end{split}
  \end{equation}
 with $L_1$, $L_2$ and $L_3$ in $H^{s}_{\mathrm{loc}}(\cD)$, $p\in (-1,1)$ and the adequate positive definiteness assumption that is for $f$ and $g$ in $C_c(\cD)$
 \begin{equation}\label{posdef}
  \int_{\cD^2} \left[K_1(x,y)f(x) f(y) + K_2(x,y)g(x) g(y)+K_3(x,y)f(x) g(y)\right] \dd x \dd y\ge 0.
 \end{equation}
The important point to check in that case is that we can have a martingale 
 decompositions  like the one introduced in Section \ref{lasecsec} for  $X$ and $Y$ 
 \textit{with a common filtration}. This can be deduced from Proposition \ref{dapopo}.}.
\subsubsection*{More general reference measures $\nu$}

We restricted our study to measures which are absolutely continuous with respect to Lebesgue. This assumption can be relaxed, and we can adapt our proof to a setup as general as the one considered in the real case \cite{NatEle}. More precisely, considering  $d'\in (0,d]$ and assuming that the measure $\nu$ satisfies 
$\int_{D\times D} \frac{1}{|x-y|^{d'}}\nu(\dd x )\nu(\dd y)<\infty,$
then  we have  convergence of  $M^{(\alpha,\beta)}_{\gep}$ and $M^{(\gamma)}_{\gep}$ as soon as 
$$  \alpha^2+\beta^2<d'  \quad \text{ or } \quad  |\alpha|\in (\sqrt{d'/2},\sqrt{2d'} ) \text{ and } |\beta|< \sqrt{2d'}-|\alpha|.$$

\medskip

\subsubsection*{Regularity of $M^{(\gamma)}$ as a distribution}

{\red 
In \cite{junnila2019regularity}, the authors investigated the regularity of $M^{(\gamma)}$  as a distribution, and proved that the distribution $M^{(\gamma)}$ is in fact more regular than the Sobolev regularity given Theorem \ref{asadistrib} - that is, $M^{(\alpha,\beta)}_{0}, M^{(\gamma)}_{0}$ both belong to  $H^{-u}_{\mathrm{loc}}$ for $u>d/2$. 
 The results \cite[Theorems 3 and 4]{junnila2019regularity} establishes a finer Besov regularity, with parameters which   depend on the value of $\gamma$.
It is natural to expect that $M^{(\gamma)}_{\gep}$ should also converge in these Besov functional space but this is out of the focus of the present paper.}

{\red
\subsubsection*{Remainder of the phase diagram}
The domain $\mathcal P_{\mathrm{sub}}$ corresponds to the subcritical regime of the Complex Gaussian Multiplicative Chaos (also called phase I in \cite{LRV15}). This is one of three phases which appear in the phase diagram of the model (this diagram appears for several related models e.g. \cite{DES93, HK18, KK14, LRV15}).
When $\gamma$ belongs to another phase, it is conjectured that $M^{(\gamma)}_\gep$ requires to be renormalized by a power of $\gep$ in order to converge to a non-trivial limit. 
Furthermore, in this case, the convergence only holds in the distributionnal sense and there is no almost sure convergence.
In \cite{Lac20}, the case of the so-called third phase
\begin{equation}
\cP_{\mathrm{III}}:= \{ \alpha+i \beta \ : \alpha,\beta \in \bbR, \ |\alpha|<\sqrt{d/2},\  \alpha^2+\beta^2> d  \}
\end{equation}
is treated and it is shown in that case that $\gep^{\frac{|\gamma|^2-d}{2}}M^{(\gamma)}_\gep$ converges in law (but not in probability)
to a complex Gaussian white noise with a random intensity, which is given by the real multiplicative chaos $M^{(2\alpha)}_0$.
The remaining part of the phase diagramm corrresponds to $\cP_{\mathrm{II}}$ which is also refered to as the  \textit{glassy phase}
$$\cP_{\mathrm{II}}:= \left\{\alpha+i \beta \ : |\alpha|+|\beta|>\sqrt{2d} \ ; \  |\alpha|> \sqrt{d/2} \right\}.$$
In this regime, it is conjectured that $(\log 1/\gep)^{\frac{3\alpha}{2}}\gep^{\sqrt{2d}\alpha-d} M^{(\gamma)}_\gep$ converges (also only in law) to a non-trivial limit. The limit should be purely atomic (i.e.\ be a weighted sum of Dirac masse).
A result has been proved in this direction when $\gamma\in \bbR$ (that is $\beta=0$, $|\alpha|>\sqrt{2d}$) in \cite{madaule16} although not for the convolution approximation of the field.
This phase  has also been investigated in \cite{HK15,madaulebis} for the related Branching Brownian Motion energy model.  }

\subsubsection*{Convergence on a part of the boundary of $\mathcal P_{\mathrm{sub}}$}
As mentionned in the introduction, the range of parameter $\cP_{\mathrm{sub}}$ is almost optimal for the convergence problem. Indeed,  the phase diagramm presented in \cite{LRV15} (which was discovered earlier in \cite{DES93} for the hierachical version of the model, see also \cite{KK14, HK18}) indicates that the limit of  $M^{(\alpha,\beta)}_{\gep}$ (and by analogy also $M^{(\gamma)}$) does not exist or  is  degenerate on the complement of the closure of $\cP_{\mathrm{sub}}$.
The boundary case is more delicate but \cite{LRV15} indicates that $M^{(\alpha,\beta)}_{\gep}$ and $M^{(\gamma)}_{\gep}$ should converge to a non-trivial limit only when
when $|\beta|=\sqrt{2d}-|\alpha|$, $|\alpha|\in (\sqrt{d/2},\sqrt{d})$, the other boundary cases require an other scaling and have a limit of a different nature.
Proving this rigourously and in full generality remains a challenging task.

\subsubsection*{Non Gaussian Chaos}
Multiplicative chaos has been studied beyond the Gaussian setup (see e.g. \cite{BJM10}). In  \cite{junnila2016m} the author investigated the complex exponential of Nongaussian fields with a $\log$ correlated structure, such as Fourrier series with Nongaussian random coefficients, and established the convergence of a martingale approximation in a complex domain which contains the segment $(-\sqrt{2d},\sqrt{2d})$. The fields can also be approximated by convolution and it remains an open question whether the convolution approximation converges and yields the same limit. The method presented here cannot operate outside of Gaussian setups since it heavily relies on the Cameron-Martin formula. 
Another interesting question is whether these Nongaussian chaos remain convergent in the full domain $\cP_{\mathrm{sub}}$.

\medskip

\subsection{Organization of the paper}

In the short  Section \ref{l2sec} we expose the argument which entails convergence in the case $\alpha^2+\beta^2<d$ (the so called $\bbL_2$ region). The argument is not new, but we include it since it is very short and yield some information about the proof strategy in the other cases.
In Section \ref{secdecoco}, we prove Theorem \ref{decoco} and in Section \ref{complexgamma} we prove Theorem \ref{coco}. The two proof are are partially inspired by the method used in \cite{NatEle}, though they present significant novelty. The proof of Theorem \ref{decoco} and that of Theorem \ref{coco}  share some common ideas, but the case of complex $\gamma$ requires some more advanced strategy. 
These sections are placed in increasing order of technical difficulty and should be read in that order. Finally In Section \ref{tightness}, we prove Theorem \ref{asadistrib} for the sequence $(M^{(\gamma)}_\gep)_{\gep>0}$ (the case $M^{(\alpha,\beta)}_\gep$ can be treated similarly).

\medskip

{\red
 {\bf Notational convention: } Throughout the paper, many inequalities are valid up to an additive or multiplicative constant. We use the generic letter $C$ for these constants and the value of $C$ is allowed to vary from one equation to another. The set $\cD$, the covariance kernel $K$ and the smoothing convolution kernel $\theta$  are considered fixed once and for all, and the constants denoted by $C$ are allowed to depend on these parameters.
 To simplify the notation, we assume that  $\nu$ is simply the Lebesgue measure, but the case $\nu(x)=\varrho(x)\dd x$ with $\varrho$ bounded  does not require any modification.  }

\section{The $\bbL_2$ convergence when $|\gamma|<\sqrt{d}$}
 \label{l2sec}

Let us display in this section the full proof of the convergence of  $M^{(\gamma)}_{\gep}(f)$, $f\in C_c(\cD)$ when $|\gamma|<\sqrt{d}$  (the same proof also applies to $M^{(\alpha,\beta)}_{\gep}(f)$ in the same range of parameters).
While this is not a new result (or proof), we have not seen it written up in details elsewhere in this context, and  it may provide to the reader some insight for the techniques used in the next sections.

\begin{proposition}\label{lecasl2}
 If $\gamma\in \bbC$ satisfies $|\gamma|<\sqrt{d}$, then the following limit exists in $\bbL_2$
 
 \begin{equation}
  \lim_{\gep\to 0} M^{(\gamma)}_{\gep}(f)=M^{(\gamma)}_0(f).
 \end{equation}
 Furthermore the limit does not depend on the choice of  the smoothing kernel $\theta$.

\end{proposition}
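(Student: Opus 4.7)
The plan is to exploit the fact that when $|\gamma|^2<d$, the quantity $e^{|\gamma|^2 K(x,y)}$ is integrable on $\cD^2$, so a direct $\bbL_2$ computation succeeds. Specifically, I would show that $(M^{(\gamma)}_\gep(f))_{\gep>0}$ is Cauchy in $\bbL_2$ by computing the cross-correlations explicitly, and then verify that the limit depends only on $K$ (not on $\theta$) by the same computation. Using that $X_\gep$ is a centered Gaussian field and the identity $\bbE[e^{aZ+bW}]=e^{(a^2\bbE Z^2+b^2\bbE W^2+2ab\,\bbE[ZW])/2}$, one computes
\begin{equation}\label{myproposalcross}
\bbE\!\left[M^{(\gamma)}_\gep(f)\, \overline{M^{(\gamma)}_{\gep'}(f)}\right]
= \int_{\cD_\gep\times\cD_{\gep'}} f(x)\overline{f(y)}\, e^{|\gamma|^2 K_{\gep,\gep'}(x,y)}\dd x\dd y.
\end{equation}
The key cancellation is that the renormalization $e^{-\frac{\gamma^2}{2}K_\gep(x)}$ (which for complex $\gamma$ is \emph{not} $e^{-\frac{|\gamma|^2}{2}K_\gep(x)}$) precisely kills the self-correlation terms, leaving only the cross covariance $|\gamma|^2 K_{\gep,\gep'}(x,y)$.

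Next, I would establish the standard upper bound
$$K_{\gep,\gep'}(x,y)\; \le\; \log\frac{1}{|x-y|\vee(\gep\vee\gep')}+C$$
uniformly on $\cD_\gep\times\cD_{\gep'}$, which follows from \eqref{labig}, the compact support of $\theta$, and continuity of $L$. This yields the uniform domination
$$|f(x)\overline{f(y)}|\,e^{|\gamma|^2 K_{\gep,\gep'}(x,y)}\le C\|f\|_\infty^2 \,\ind_{\Supp f\times\Supp f}(x,y)\,|x-y|^{-|\gamma|^2},$$
which is integrable on $\cD^2$ precisely because $|\gamma|^2<d$. Since $\theta_\gep$ is an approximate identity and $K$ is continuous off the diagonal, $K_{\gep,\gep'}(x,y)\to K(x,y)$ as $\gep,\gep'\to 0$ for every $x\neq y$. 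Dominated convergence applied to \eqref{myproposalcross} then gives
$$\lim_{\gep,\gep'\to 0} \bbE\!\left[M^{(\gamma)}_\gep(f)\,\overline{M^{(\gamma)}_{\gep'}(f)}\right] = \int_{\cD^2} f(x)\overline{f(y)}\,e^{|\gamma|^2 K(x,y)}\dd x\dd y,$$
from which $\bbE[|M^{(\gamma)}_\gep(f)-M^{(\gamma)}_{\gep'}(f)|^2]\to 0$ follows by the polarization $|a-b|^2=|a|^2+|b|^2-2\Re(a\bar b)$. Completeness of $\bbL_2$ then produces the limit $M^{(\gamma)}_0(f)$.

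For universality in the smoothing kernel, I would run the same argument \eqref{myproposalcross} but with two different kernels $\theta$ and $\tilde\theta$ on the two factors, producing a kernel $\tilde K_{\gep,\gep'}$ that still converges pointwise to $K(x,y)$ off the diagonal and satisfies the same uniform upper bound. Dominated convergence identifies the two $\bbL_2$ limits, so the limit is independent of $\theta$. The only mildly delicate step is the uniform upper bound on $K_{\gep,\gep'}$; everything else is a direct $\bbL_2$ isometry computation plus dominated convergence. I do not expect a serious obstacle in this regime: the whole point of the restriction $|\gamma|^2<d$ is precisely to place us inside the $\bbL_2$ regime where this Cauchy-sequence strategy closes without further work.
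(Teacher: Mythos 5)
Your proposal is correct and follows essentially the same route as the paper: compute $\bbE[M^{(\gamma)}_\gep(f)\overline{M^{(\gamma)}_{\gep'}(f)}]=\int e^{|\gamma|^2K_{\gep,\gep'}(x,y)}f(x)f(y)\,\dd x\dd y$ via the Gaussian exponential identity, dominate the integrand by $C|x-y|^{-|\gamma|^2}$ using the uniform bound $|K_{\gep,\gep'}(x,y)-\log\frac{1}{|x-y|\vee\gep\vee\gep'}|\le C$ (the paper's Lemma~\ref{anexo}), and conclude by dominated convergence; independence of $\theta$ is obtained exactly as you suggest, by mixing two kernels in the cross term. No gaps.
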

Our proof is going to rely on an estimate for the
correlation kernel $K_{\gep,\gep'}(x,y)$ (recall the definition \eqref{labig}.
The proof is standard and left to the reader (note that since $L$ is continuous and thus bounded on the considered set , it is sufficent to prove \eqref{stimatz}  for $K(x,y)=\log \frac{1}{|x-y|}$).

\begin{lemma}\label{anexo}
With the setup described above, given $\eta>0$ and $R>0$ there exists a positive constant $C_{\eta,R}>0$ such that for any $\gep,\gep' \in(0, \eta]$
and any $x,y \in  \cD_{\eta}\cup B(0,R)$
\begin{equation}\label{stimatz}
  \left| K_{\gep,\gep'}(x,y)- \log \frac{1}{|x-y|\vee \gep \vee \gep'} \right| \le  C_{\eta,R}
\end{equation}
and we have furthermore if $x,y\in \cD$ and  $x\ne y$
\begin{equation}\label{laconv}
\lim_{\gep,\gep'\to 0} K_{\gep,\gep'}(x,y)=K(x,y).
\end{equation}
\end{lemma}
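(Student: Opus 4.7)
The plan is to reduce to the pure logarithmic kernel by peeling off the continuous remainder $L$, then analyze the integral by a change of variables and a dichotomy on whether $|x-y|$ is large or small compared to $\gep \vee \gep'$.

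\textbf{Step 1: Reduction to the logarithmic part.} Write $K = \log(1/|x-y|) + L$ and split $K_{\gep,\gep'} = I_{\gep,\gep'} + L_{\gep,\gep'}$ accordingly, where
\[
I_{\gep,\gep'}(x,y) := \iint \theta_{\gep}(x-z_1)\theta_{\gep'}(y-z_2) \log\frac{1}{|z_1-z_2|}\, dz_1\, dz_2.
\]
Since $L$ is continuous on $\cD \times \cD$, it is bounded on the compact set $(\cD_\eta \cap \overline{B(0,R+\eta)})^2$, which contains the support of the integrand defining $L_{\gep,\gep'}(x,y)$ for $\gep,\gep' \le \eta$ and $x,y$ in the range of interest. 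Hence $|L_{\gep,\gep'}(x,y)| \le \|L\|_\infty$ uniformly, and by uniform continuity $L_{\gep,\gep'}(x,y) \to L(x,y)$ as $\gep,\gep' \to 0$. It remains to prove the analogue of \eqref{stimatz} and \eqref{laconv} for $I_{\gep,\gep'}$.

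\textbf{Step 2: Change of variables.} Substituting $z_1 = x - \gep u$, $z_2 = y - \gep' v$ yields
\[
I_{\gep,\gep'}(x,y) = \iint_{B(0,1)^2} \theta(u)\theta(v)\, \log\frac{1}{|(x-y) - \gep u + \gep' v|}\, du\, dv.
\]
Set $\delta := \gep \vee \gep'$.

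\textbf{Step 3: Dichotomy.} If $|x-y| \ge 4\delta$, then for all $u,v \in B(0,1)$ we have $|(x-y)-\gep u + \gep' v| \in [|x-y|/2,\, 2|x-y|]$, so $|I_{\gep,\gep'}(x,y) - \log(1/|x-y|)| \le \log 2$. If instead $|x-y| < 4\delta$, rescale by $\delta$: writing $\tilde r := (x-y)/\delta$, $a := \gep/\delta$, $b := \gep'/\delta$, we have $|\tilde r| < 4$ and $a,b \in (0,1]$, and
\[
I_{\gep,\gep'}(x,y) = \log\frac{1}{\delta} + \iint \theta(u)\theta(v)\, \log\frac{1}{|\tilde r - au + bv|}\, du\, dv.
\]
The remainder integral is bounded below by $-\log 6$ (the argument has modulus at most $6$), and bounded above by Fubini and a translation: for fixed $v$,
\[
\int \theta(u)\log\frac{1}{|\tilde r + bv - au|}\, du \le \|\theta\|_\infty \, a^{-d} \int_{B(0,\,a + |\tilde r + bv|)} \log\frac{1}{|w|}\, dw,
\]
which is uniformly bounded in $a,b \in (0,1]$ and $|\tilde r| < 4$ by local integrability of $\log(1/|\cdot|)$ on $\bbR^d$. (If desired, handle $a$ near $0$ separately by observing that then $|\tilde r + bv - au|$ is essentially $|\tilde r + bv|$, also uniformly bounded away from $\infty$.) Combining the two cases gives \eqref{stimatz} with a constant depending only on $\eta$, $R$, $\theta$, and $\|L\|_\infty$.

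\textbf{Step 4: Pointwise convergence.} If $x \ne y$, then for $\gep,\gep' < |x-y|/4$ we are in the first case of the dichotomy, and the integrand in the change-of-variable expression converges pointwise to $\log(1/|x-y|)$ while being uniformly bounded. Dominated convergence yields $I_{\gep,\gep'}(x,y) \to \log(1/|x-y|)$, which combined with $L_{\gep,\gep'}(x,y) \to L(x,y)$ from Step 1 gives \eqref{laconv}. The only mildly technical point is the uniform bound on the rescaled remainder integral in Step 3, which is purely elementary given local integrability of the logarithm in $\bbR^d$.
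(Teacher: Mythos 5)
The paper itself gives no proof of this lemma; it only records the reduction you perform in Step 1 (``since $L$ is continuous and thus bounded on the considered set, it is sufficient to prove \eqref{stimatz} for $K(x,y)=\log\frac{1}{|x-y|}$'') and declares the rest standard. Your Steps 1 and 2, the first half of Step 3 (the case $|x-y|\ge 4\delta$ and the lower bound $-\log 6$ in the other case), and Step 4 are exactly that standard argument and are correct.

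There is, however, a genuine flaw in the upper bound of Step 3 in the regime $|x-y|<4\delta$. The quantity you display, $\|\theta\|_\infty\, a^{-d}\int_{B(0,\,a+|\tilde r+bv|)}\log\frac{1}{|w|}\,\dd w$, is \emph{not} uniformly bounded in $a\in(0,1]$: the domain of integration does not shrink with $a$, so as written the expression grows like $a^{-d}$, and even after restricting to the positive part of the integrand (which you must do anyway, since enlarging the domain is only legitimate for a nonnegative integrand) it still grows like $\log\frac1a$. Your parenthetical repair does not close this, because $|\tilde r+bv|$ is not bounded away from $0$. In fact the uniform bound is \emph{false} under the hypotheses you state ($a,b\in(0,1]$, $|\tilde r|\le 4$): taking $\tilde r=0$ and $a=b=t\to 0$ gives
\begin{equation*}
\iint \theta(u)\theta(v)\log\frac{1}{t\,|v-u|}\,\dd u\,\dd v=\log\frac1t+O(1)\longrightarrow\infty .
\end{equation*}
What saves the argument is a fact you have available but never invoke: since $\delta=\gep\vee\gep'$, you have $a\vee b=1$. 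By the symmetry of the expression you may assume $\gep'\le\gep$, i.e.\ $a=1$; then, for fixed $v$, the substitution $w=u-(\tilde r+bv)$ (unit Jacobian) gives
\begin{equation*}
\int_{B(0,1)}\theta(u)\Bigl(\log\frac{1}{|\tilde r+bv-u|}\Bigr)_+\dd u\;\le\;\|\theta\|_\infty\int_{B(0,1)}\log\frac{1}{|w|}\,\dd w\;<\;\infty ,
\end{equation*}
uniformly in $v$, $b$ and $\tilde r$, and integrating against $\theta(v)\,\dd v$ finishes the upper bound. (Equivalently: always integrate first in whichever of $u,v$ carries the coefficient equal to $1$.) With this correction your proof is complete and is the one the paper intends.
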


\begin{proof}[Proof of Proposition \ref{lecasl2}]
Since $f$  and $\gamma$ is fixed, with a small abuse of notation, we  simply write $M_\gep$ for $M^{(\gamma)}_{\gep}(f)$
It is sufficient to prove that the sequence is Cauchy in $\bbL_2$. We have 
\begin{equation}\label{decomp}
\bbE\left[ |M_\gep-M_{\gep'}|^2\right] \!\!\!
= \bbE \left[ |M_\gep|^2 \right]+ \bbE \left[ |M_{\gep'}|^2 \right]- \bbE \left[M_\gep \overline{M} _{\gep'}\right]-\bbE \left[\bar M_\gep {M} _{\gep'}\right].
 \end{equation}
Hence it is sufficient to show that $\bbE \left[M_\gep \overline{M} _{\gep'}\right]$ converges when $\gep$ and $\gep'$ both go to zero (this implies that the four terms in the r.h.s.\ of \eqref{decomp} cancel out in the limit).
Assuming that $\gep$ and $\gep'$ are sufficiently small so that the support of $f$ is included in  $\cD_{\gep\vee \gep'}$ (recall \eqref{cdgep}) we have 
\begin{multline}
 \bbE \left[M_\gep \overline{M} _{\gep'}\right]
 =\int_{\cD^2} \bbE\left[ e^{\gamma X_{\gep}(x)+\bar \gamma X_{\gep}(y)-
 \frac{\gamma^2 K_{\gep}(x)+\bar \gamma^2  K_{\gep'}(y)}{2}}\right]f(x)f(y) \dd x \dd y\\
 =\int_{\cD^2} e^{|\gamma|^2 K_{\gep,\gep'}(x,y)}f(x)f(y)\dd x \dd y.
\end{multline}
From Lemma \ref{anexo}, we have $ e^{|\gamma|^2 K_{\gep,\gep'}(x,y)}\le C|x-y|^{-|\gamma|^2}$ when $x$ and $y$ are in the support of $f$ and thus  we obtain by dominated convergence 
\begin{equation}
 \lim_{\gep,\gep'\to 0} \int_{\cD^2} e^{|\gamma|^2 K_{\gep,\gep'}(x,y)}f(x)f(y)\dd x \dd y= \int_{\cD^2} e^{|\gamma|^2 K(x,y)}f(x)f(y)\dd x \dd y.
\end{equation}
\end{proof}

\section{Proof of Theorem \ref{decoco}}\label{secdecoco}

\subsection{The strategy of proof}\label{secstratz}

In this section we prove Theorem \ref{decoco}. The proof builds on  the ideas developped in \cite{NatEle} to prove Theorem \ref{darealcase}, the main one being to consider a ``truncated'' version of $M^{(\alpha,\beta)}_{\gep}(f)$ by discarding the contribution of excessively high values of $X_{\gep}$.
However, there is a key difference here. In \cite{NatEle}, it is shown that the difference between the truncated partition function and the original one is small in $\bbL_1$. This is not possible to show this in the complex case and we have to make sure that our truncated partition function exactly coincides with the original one with a probability which tends to one when the truncation level goes to infinity.
Our result is proved by showing that:
\begin{itemize}
 \item [(A)] The truncated version of the partition function converges in $\bbL_2$,
 \item [(B)] With a large probability the truncated and non-truncated version of the partition function coincide.
\end{itemize}
Note that without loss of generality we can assume that $\alpha$ and $\beta$ are both non-negative.
Let us assume that $(\alpha,\beta)\in \cP_{\mathrm{sub}}$ with  $\alpha\in(\sqrt{d/2},\sqrt{2d})$ and $\beta>0$ (the other case can be treated with the $\bbL_2$ method as in Proposition \ref{lecasl2}). We fix  $\gl>0$ that satisfies 
\begin{equation}\label{surlambda}
 \sqrt{2d}<\gl<2\alpha \quad \text{ and } \quad  d +\frac{(2\alpha-\gl)^2}{2}> \alpha^2+\beta^2.
\end{equation}
The reader can check that the existence of such a $\gl$ follows from our assumptions. 
For $k\ge 1$ we define (with some minor abuse of notation) $X_k:=X_{\gep_k}$ where $\gep_k=e^{-k}$. {\red For any integer $q$ such that the support of $f$, satisfies $\Supp(f)\subset \cD_{\gep_q}$ (we let $q_0(f)$ denote the smallest such integer)}
we define for $q\ge q_0(f)$ the events, $A_{q,\gl}(x)$ for $x\in \cD_{\gep_k}$, ) and $\cA_{q,\gl}(f)$ as 

\begin{equation}\label{defaqgl}\begin{split}
                 A_{q,\gl}(x)&:= \left\{  \forall k\ge q, \quad  X_k(x) \le  k \gl  \right\},\\
                  \cA_{q,\gl}(f) := \bigcap_{ x\in \Supp(f)}  & A_{q,\gl}(x)= \left\{  \forall k\ge q, \quad  \sup_{x\in  \Supp(f)} X_k(x) \le  k \gl  \right\}.
                \end{split}
\end{equation}
Now we define $M^{(\alpha,\beta)}_{\gep,q}(f)$ (we will omit the dependence in $\alpha$ and $\beta$ most of the time to alleviate the notation) by 
\begin{equation}
 M^{(\alpha,\beta)}_{\gep,q }(f):= \int_{\cD_{\gep}} e^{\alpha X_\gep(x)+i\beta Y_\gep(x)+\frac{\beta^2-\alpha^2}{2} K_{\gep}(x)} \ind_{ A_{q,\gl}(x)}f(x)\dd x.
 \end{equation}
The convergence of   $M^{(\alpha,\beta)}_{\gep}$ is deduced from the two following statements. 
 \begin{proposition}\label{firstacase}
Given $f\in C_{c}(\cD)$, $q\ge q_0(f)$ the sequence  $(M^{(\alpha,\beta)}_{\gep,q }(f))_{\gep\in(0,1]}$ is Cauchy in $\bbL_2$. In particular the following limit exists
\begin{equation}
 \lim_{\gep\to 0}M^{(\alpha,\beta)}_{\gep,q }(f):= M^{(\alpha,\beta)}_{0,q }(f).
\end{equation}
Furthermore the limit does not depend on the choice of $\theta$.
  
 \end{proposition}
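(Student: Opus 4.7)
The plan is to establish the Cauchy property in $\bbL_2$ directly by decomposing
\[
\bbE\!\left[|M^{(\alpha,\beta)}_{\gep,q}(f) - M^{(\alpha,\beta)}_{\gep',q}(f)|^2\right] = \bbE\!\left[|M_{\gep,q}|^2\right] + \bbE\!\left[|M_{\gep',q}|^2\right] - 2\Re\,\bbE\!\left[M_{\gep,q}\overline{M_{\gep',q}}\right]
\]
and showing that each of the three terms converges, as $\gep,\gep'\to 0$, to the same limiting integral. Expanding the cross term as a double integral over $(x,y)\in \Supp(f)^2$, the independence of $X$ and $Y$ allows us to compute the $Y$-contribution exactly: after cancellation with the normalising $K_\gep(x) + K_{\gep'}(y)$ in \eqref{simplecase}, it produces a factor $e^{\beta^2 K_{\gep,\gep'}(x,y)}$. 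For the $X$-contribution we apply the Cameron--Martin formula: the Gaussian exponential $e^{\alpha(X_\gep(x) + X_{\gep'}(y)) - \frac{\alpha^2}{2}(K_\gep(x)+K_{\gep'}(y))}$ has expectation $e^{\alpha^2 K_{\gep,\gep'}(x,y)}$ and, under the corresponding tilted probability $\tilde\bbP_{\gep,\gep',x,y}$, shifts the process $(X_k(z))_{k\ge q}$ by its conditional mean $\alpha(K_{\gep_k,\gep}(z,x) + K_{\gep_k,\gep'}(z,y))$. We thus reach
\[
\bbE\!\left[M_{\gep,q}\overline{M_{\gep',q}}\right] = \int\!\!\!\int_{\cD^2} e^{(\alpha^2+\beta^2)K_{\gep,\gep'}(x,y)}\, p_{\gep,\gep',q}(x,y)\, f(x) f(y)\,\dd x\, \dd y,
\]
where $p_{\gep,\gep',q}(x,y):=\tilde\bbP_{\gep,\gep',x,y}\bigl(A_{q,\gl}(x)\cap A_{q,\gl}(y)\bigr)$.

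Pointwise convergence of the integrand for $x\ne y$ is straightforward: Lemma \ref{anexo} gives $K_{\gep,\gep'}(x,y)\to K(x,y)$, and under $\tilde\bbP_{\gep,\gep',x,y}$ the Gaussian process $(X_k(x),X_k(y))_{k\ge q}$ has covariance independent of $(\gep,\gep')$ and mean converging to a deterministic limit, so $p_{\gep,\gep',q}(x,y)$ converges to a limit $p_{0,0,q}(x,y)$. The main technical obstacle is to exhibit a uniform bound of the form
\[
e^{(\alpha^2+\beta^2)K_{\gep,\gep'}(x,y)}\, p_{\gep,\gep',q}(x,y) \leq C |x-y|^{-\eta} \quad \text{with some } \eta<d,
\]
valid for all $\gep,\gep' \in (0,\gep_q]$ and $x,y\in \Supp(f)$, so that dominated convergence can be applied.

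To prove this bound, fix $x\ne y$ and, in the case $|x-y|\ge \gep\vee \gep'$, set $k_*:=\lceil\log(1/|x-y|)\rceil$. By Lemma \ref{anexo} we have $K_{\gep_{k_*},\gep}(x,x)\asymp k_*$ and $K_{\gep_{k_*},\gep'}(x,y)\asymp k_*$, so that under $\tilde\bbP_{\gep,\gep',x,y}$ the variable $X_{k_*}(x)$ has mean approximately $2\alpha k_*$ and variance of order $k_*$. The event $\{X_{k_*}(x)\le \gl k_*\}$ therefore corresponds to a downward deviation of size $(2\alpha-\gl)k_*$, which by the standard Gaussian tail bound has probability at most $C\exp(-(2\alpha-\gl)^2 k_*/2)\le C|x-y|^{(2\alpha-\gl)^2/2}$ (using $\gl<2\alpha$ from \eqref{surlambda}). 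Combined with $e^{(\alpha^2+\beta^2)K_{\gep,\gep'}(x,y)}\le C|x-y|^{-(\alpha^2+\beta^2)}$, this yields the bound with exponent $\eta=\alpha^2+\beta^2-(2\alpha-\gl)^2/2$, which is strictly less than $d$ by the second inequality in \eqref{surlambda}. The case $|x-y|<\gep\vee\gep'$ is handled by using the scale $k=\lceil\log(1/(\gep\vee\gep'))\rceil$ in place of $k_*$, giving the analogous bound $\le C(\gep\vee\gep')^{-\eta}$; since $K_{\gep,\gep'}(x,y)$ is bounded on that region (by Lemma \ref{anexo}), this integrates on the small diagonal without loss.

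Granted this dominated bound, $\bbE[M_{\gep,q}\overline{M_{\gep',q}}]$, $\bbE[|M_{\gep,q}|^2]$ and $\bbE[|M_{\gep',q}|^2]$ all converge to $\int\!\!\int e^{(\alpha^2+\beta^2)K(x,y)}p_{0,0,q}(x,y)f(x)f(y)\,\dd x\,\dd y$ by dominated convergence, proving the Cauchy property. Independence of the limit from the choice of $\theta$ is then obtained by applying the same computation to the cross term $\bbE[M^{(\theta_1)}_{\gep,q}\overline{M^{(\theta_2)}_{\gep',q}}]$ built from two mollifiers $\theta_1,\theta_2$: the integrand in the limit is identical, so the two limits in $\bbL_2$ must coincide. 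I expect the delicate step to be justifying the Gaussian tail estimate at the correct scale $k_*$ uniformly in $\gep,\gep'$, especially the verification that the shifts in the tilted law behave as claimed even at scales $k$ that interact with the cutoffs $\gep$ and $\gep'$; the choice of $\gl$ in \eqref{surlambda} is the exact quantitative input needed for this bound to close.
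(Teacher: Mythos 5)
Your proposal follows essentially the same route as the paper's proof: average out the independent field $Y$ to produce the factor $e^{\beta^2 K_{\gep,\gep'}(x,y)}$, apply a Cameron--Martin tilt to rewrite the cross term as $\int_{\cD^2} e^{(\alpha^2+\beta^2)K_{\gep,\gep'}(x,y)}\,\tilde \bbP_{\gep,\gep',x,y}(A_q(x,y))\,f(x)f(y)\,\dd x\,\dd y$, dominate the tilted probability by a single Gaussian tail estimate at the scale $k_*\approx \log \frac{1}{|x-y|\vee \gep}$ yielding the exponent $\frac{(2\alpha-\gl)^2}{2}$, conclude by dominated convergence using \eqref{surlambda}, and obtain independence of $\theta$ from the same computation applied to the cross term with two mollifiers. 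The one step you treat more casually than the paper is the pointwise convergence of $\tilde \bbP_{\gep,\gep',x,y}(A_q(x,y))$: since $A_q(x,y)$ involves infinitely many constraints, convergence of the shifted means does not by itself yield convergence of the probability, and the paper closes this with a short truncation argument (cutting the intersection at a finite level $\ell_0$ and bounding the tail contribution uniformly in $\gep,\gep'$ via a union bound using $\gl>\alpha$) that your ``so'' elides.
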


\begin{proposition}\label{lapropo}
If  $\gl>\sqrt{2d}$ then we have 
 \begin{equation}
  \lim_{q\to \infty} \bbP[\cA_{q,\gl}(f)]=1.
 \end{equation}

\end{proposition}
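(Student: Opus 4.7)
The plan is a Borel--Cantelli type bound. Since
\begin{equation*}
\bbP\bigl[\cA_{q,\gl}(f)^c\bigr] \le \sum_{k\ge q} \bbP\!\left[\sup_{x\in \Supp(f)} X_k(x) > k\gl\right],
\end{equation*}
it is enough to prove that the single-scale probability $p_k := \bbP[\sup_{x\in \Supp(f)} X_k(x) > k\gl]$ decays at least exponentially in $k$, so that $\sum_{k\ge q} p_k \to 0$ as $q\to\infty$. At the mollification scale $\gep_k = e^{-k}$ the supremum of a log-correlated field typically grows like $\sqrt{2d}\,k$, and the assumption $\gl > \sqrt{2d}$ leaves room for an exponentially small overshoot probability.

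To control $p_k$ I would use a one-scale covering. From Lemma \ref{anexo}, as soon as $k\ge q_0(f)$ we have $K_{\gep_k}(x) \le k + C$ uniformly for $x\in \Supp(f)$, so each $X_k(x)$ is centered Gaussian with variance at most $k+C$. Pick an $\gep_k$-net $N_k$ of $\Supp(f)$ with $|N_k| \le C\gep_k^{-d} = C e^{kd}$. The standard Gaussian tail and a union bound over $N_k$ give, for any fixed constant $T$,
\begin{equation*}
\bbP\!\left[\max_{x\in N_k} X_k(x) > k\gl - T\right] \le |N_k|\, e^{-(k\gl - T)^2/(2(k+C))} \le C_T\, e^{-k(\gl^2/2 - d) + O(1)}.
\end{equation*}
Because $\gl^2 > 2d$, the exponent is linear in $-k$ with strictly negative coefficient.

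To upgrade the bound on the net to a bound on the actual supremum I would use the smoothness of $\theta$: the field $X_k$ is differentiable in $x$, with $\nabla X_k(x) = \int X(y)\, \nabla_x \theta_{\gep_k}(x-y)\, \dd y$ a centered Gaussian vector whose components have variance of order $\gep_k^{-2}$ (from the rescaling of $\theta$). Consequently the oscillation of $X_k$ on a ball of radius $\gep_k$ is of sub-Gaussian size $O(1)$, and a standard Kolmogorov/Dudley chaining inside a single $\gep_k$-ball yields a constant $T_0$ and a bound $\bbP[\sup_{y\in B(x,\gep_k)} (X_k(y) - X_k(x)) > T_0] \le e^{-2kd}$, uniformly in $x$. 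A union bound over $N_k$ absorbs the polynomial factor $e^{kd}$, so
\begin{equation*}
p_k \le \bbP\!\left[\max_{x\in N_k} X_k(x) > k\gl - T_0\right] + C e^{-kd} \le C e^{-c_0 k}
\end{equation*}
for some $c_0>0$, and summing over $k\ge q$ gives $\bbP[\cA_{q,\gl}(f)^c] \le C e^{-c_0 q} \to 0$.

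The only technical point is the oscillation estimate on a single $\gep_k$-ball; all other ingredients are a pointwise Gaussian tail and a union bound over a net of cardinality $e^{kd}$, which produces exactly the critical slope $\sqrt{2d}$ matching the assumption. In particular, no refined chaining across scales or second moment computation is required, since the pointwise tail already meets the entropy of the net at the right constant.
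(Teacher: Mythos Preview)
Your approach is the same as the paper's: Borel--Cantelli reduction to the single-scale overshoot, a union bound over a grid, and a chaining estimate for the local oscillation. There is, however, one slip in the oscillation step. On a ball of radius $\gep_k$ the process $X_k(\cdot)-X_k(x)$ has intrinsic (canonical) diameter of order $O(1)$ --- this is exactly your observation that $\nabla X_k$ has variance of order $\gep_k^{-2}$ --- so by Borell--TIS its supremum has a sub-Gaussian tail with a \emph{fixed} parameter: one obtains $\bbP\bigl[\sup_{y\in B(x,\gep_k)}(X_k(y)-X_k(x))>T_0\bigr]\le Ce^{-cT_0^2}$, which for constant $T_0$ is a constant, not $e^{-2kd}$. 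The union bound over the $e^{kd}$ balls then diverges.

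The fix is easy and there are two natural options. One is to let the threshold grow: take $T_0=C'\sqrt{k}$ with $C'$ large, so that $Ce^{-cT_0^2}\le e^{-2kd}$, while the net bound becomes
\[
|N_k|\,e^{-(k\gl-C'\sqrt{k})^2/(2(k+C))}\le C\,e^{-k(\gl^2/2-d)+O(\sqrt{k})},
\]
still summable since $\gl^2>2d$. The other, which is what the paper does, is to refine the mesh to $e^{-(1+\delta)k}$ with $\delta>0$ chosen so that $2d(1+\delta)<\gl^2$: then the increment variance on a mesh cell is $O\bigl(ke^{-\delta k}\bigr)\to 0$, the chaining yields oscillations $\le 1$ with probability $\ge 1-e^{-k}$, and the enlarged net ($e^{kd(1+\delta)}$ points) is still beaten by the pointwise tail. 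Either route closes the gap.
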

The proof of Proposition \ref{firstacase} is detailed in the next subsection. 
The asymptotic of the maximum of log-correlated Gaussian fields is a much studied topic and results that are  much more precise that Proposition \ref{lapropo} have been proved for related models (see for instance \cite{Mad15,BDZ16,BL16}).
We could not find a reference that matches the setup considered in the present paper, and for this reason, we include a proof in Appendix \ref{prooflapropo}.

\begin{proof}[Proof of Theorem \ref{decoco} from Proposition \ref{firstacase} and \ref{lapropo}]
 
Let $\mathfrak q$ be the smallest integer value of $q$ such that $\cA_{q,\gl}$ holds. Proposition \ref{lapropo} implies that $\mathfrak q$ is finite almost surely.
We have for every $\gep$, $M^{(\alpha,\beta)}_{\gep}(f)= M^{(\alpha,\beta)}_{\gep,\mathfrak q}(f),$
and thus as consequence of Proposition \ref{firstacase}, 
$M^{(\alpha,\beta)}_{\gep}(f)$ converges in probability towards 
$M^{(\alpha,\beta)}_{0,\mathfrak q}(f)=M^{(\alpha,\beta)}_0(f)$ ($\mathfrak q$ is a random variable but the convergence in probability can be obtained by decomposing on all its possible values since there are only countably many).
\end{proof}

\subsection{Proof of Proposition \ref{firstacase}}

We first prove the convergence result for a fixed $\theta$ and discuss the dependence in $\theta$ (which turns out to be direct consequence of the proof) in Section \ref{nodepend}.
For the same reason as in \eqref{decomp}, we only need to prove the convergence of $\bbE\left[ M^{(\alpha,\beta)}_{\gep,q }(f)\bar M^{(\alpha,\beta)}_{\gep',q }(f) \right]$ towards a finite limit. We omit the dependence in $f$, $\alpha$ and $\beta$ in the computation.
Let us assume that $\gep'\le \gep$ and that the support of $f$ is included in $\cD_{\gep}$ (recall \eqref{cdgep}).
 Averaging first we respect to $Y$, and setting  $A_q(x,y)=A_{q,\gl}(x)\cap A_{q,\gl}(y)$ we obtain 
\begin{equation}\begin{split}\label{ziut}
 \bbE&\left[  M_{\gep,q }\bar M_{\gep',q } \right]\\
 &= \int_{\cD^2} e^{\beta^2 K_{\gep,\gep'}(x,y)} \bbE\left[ e^{\alpha (X_{\gep}(x)+ X_{\gep'}(y))-
 \frac{\alpha^2}{2}(K_{\gep}(x)+K_{\gep'}(y))} \ind_{A_q(x,y)}\right]f(x)f(y)\dd x \dd y
 \\&= \int_{\cD^2} e^{(\alpha^2+\beta^2) K_{\gep,\gep'}(x,y)} \tilde \bbP_{\gep,\gep',x,y}(A_q(x,y))f(x)f(y)\dd x \dd y,
\end{split}
 \end{equation}
where  $\tilde \bbP_{\gep,\gep',x,y}$ is defined by its density with respect to $\bbP$ which is equal to 
\begin{equation}\label{cameron}
 \frac{\dd \tilde \bbP_{\gep,\gep',x,y}}{\dd \bbP}=
 e^{\alpha X_{\gep}(x)+\alpha X_{\gep'}(y)-
 \frac{\alpha^2}{2}[K_{\gep}(x)+K'_{\gep}(y)+2K_{\gep,\gep'}(x,y)]}.
\end{equation}
We conclude from \eqref{ziut} using dominated convergence theorem and the following estimate for $\tilde \bbP_{\gep,\gep',x,y}(A_q(x,y))$.

\begin{lemma}\label{ladomine}
The following domination and convergence results hold.
\begin{itemize}
  \item  [(A)]There exists a constant $C_q>0$ such that for every $x,y\in D$,  if $\gep'\le \gep$ we have
  \begin{equation}\label{ladomi}
   \tilde \bbP_{\gep,\gep',x,y}(A_q(x,y))\le C_q (|x-y|\vee \gep)^{ \frac{(2\alpha-\gl)^2}{2} } 
  \end{equation}

  \item [(B)]  We have $ \lim_{\gep,\gep'\to 0} \tilde \bbP_{\gep,\gep',x,y}(A_q(x,y)) =\bbP[ \bar A_{q}(x,y)]$,
  where 
  \begin{equation*}
    \bar A_{q}(x,y):= \bigcap_{k\ge q}\left\{ X_k(x) \le  k \gl -\alpha H_k(x,y)\ ; \  X_k(y) \le  k \gl -\alpha H_k(y,x) \right\},
  \end{equation*}
and $H_k(x,y):= K_{\gep_k,0}(x,x)+ K_{\gep_k,0}(y,x)$ ($\gep_k=e^{-k}$ and $K_{\gep,0}$ is defined by \eqref{labig}).
 \end{itemize}
\end{lemma}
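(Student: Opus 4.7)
The whole strategy rests on interpreting $\tilde{\bbP}_{\gep,\gep',x,y}$ via Cameron-Martin. The density \eqref{cameron} is precisely $e^{Z-\frac{1}{2}\Var Z}$ for the centered Gaussian $Z=\alpha X_\gep(x)+\alpha X_{\gep'}(y)$, so under $\tilde{\bbP}_{\gep,\gep',x,y}$ the process $X$ keeps its original covariance and acquires the mean shift $\bbE_{\tilde{\bbP}}[X_k(z)] = \alpha K_{\gep_k,\gep}(z,x)+\alpha K_{\gep_k,\gep'}(z,y)$. Writing $h^{(\gep,\gep')}_k(z) := K_{\gep_k,\gep}(z,x)+K_{\gep_k,\gep'}(z,y)$, this reduces both parts of the lemma to statements about $\bbP$: setting $\ttK^{(\gep,\gep')}_k(z) := k\gl - \alpha h^{(\gep,\gep')}_k(z)$ we have
\begin{equation*}
\tilde{\bbP}_{\gep,\gep',x,y}[A_q(x,y)] = \bbP\!\left[\,\forall k\ge q,\ X_k(x)\le \ttK^{(\gep,\gep')}_k(x),\ X_k(y)\le \ttK^{(\gep,\gep')}_k(y)\,\right].
\end{equation*}

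For (A), the plan is to keep only the constraint at a single, well-chosen scale. Fix $k^{\star} := \lfloor \log \bigl(1/(|x-y|\vee \gep)\bigr)\rfloor$; we may assume $k^{\star}\ge q$, since otherwise $|x-y|\vee \gep$ is bounded below by $\gep_q$ and the trivial bound $\tilde{\bbP}\le 1$ already gives \eqref{ladomi} for a suitable $C_q$. At this scale $\gep_{k^\star}\asymp |x-y|\vee \gep\ge \gep'\wedge\gep$, hence Lemma \ref{anexo} gives $K_{\gep_{k^\star},\gep}(x,x)=k^\star+O(1)$ and $K_{\gep_{k^\star},\gep'}(x,y)=k^\star+O(1)$, so that $\alpha h^{(\gep,\gep')}_{k^\star}(x)=2\alpha k^\star+O(1)$. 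Dropping all other constraints we obtain
\begin{equation*}
\tilde{\bbP}_{\gep,\gep',x,y}[A_q(x,y)] \le \bbP\!\left[X_{k^\star}(x)\le k^\star(\gl-2\alpha)+O(1)\right].
\end{equation*}
Since $\gl-2\alpha<0$ by \eqref{surlambda} and $X_{k^\star}(x)$ is a centered Gaussian with variance $k^\star+O(1)$, the standard Gaussian lower tail bounds this probability by $Ce^{-k^\star(2\alpha-\gl)^2/2}$, which is exactly the desired $C_q(|x-y|\vee\gep)^{(2\alpha-\gl)^2/2}$.

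For (B), I would combine pointwise convergence of the thresholds with a uniform tail estimate. For fixed $x\neq y$, Lemma \ref{anexo} entails that for each individual $k\ge q$ the shifts $h^{(\gep,\gep')}_k(x)$ and $h^{(\gep,\gep')}_k(y)$ converge, as $\gep,\gep'\to 0$, to the limiting values appearing in the definition of $\bar{A}_q(x,y)$. For any finite $N$, the joint Gaussian vector $(X_k(x),X_k(y))_{q\le k\le N}$ is non-degenerate, so the probability of the finite intersection up to level $N$ converges to its limit. To upgrade this to the infinite intersection, the remaining task is to show that the tail contribution $\bbP[\exists k>N\colon X_k(x)>\ttK^{(\gep,\gep')}_k(x)$ or $X_k(y)>\ttK^{(\gep,\gep')}_k(y)]$ goes to zero as $N\to\infty$ uniformly in small $\gep,\gep'$. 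Since for $k\ge k^\star$ the shifts are bounded above by $2k+O(1)$ uniformly in $\gep,\gep'$, this reduces to an estimate on the maximum of a log-correlated Gaussian field and can be obtained from the same argument used to establish Proposition \ref{lapropo}. A sandwich argument then closes the proof: (A) provides the envelope, (B) follows by letting $N\to\infty$ after the finite-level convergence.

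The main obstacle is (B): producing the uniform-in-$(\gep,\gep')$ tail bound that converts pointwise convergence of shifts at each scale into convergence of the probability of the full infinite intersection. Part (A) is essentially a careful bookkeeping exercise combining Lemma \ref{anexo} with a single-scale Gaussian estimate; its only subtlety is identifying the optimal scale $k^\star$ and verifying that condition \eqref{surlambda} produces an exponent that, together with the Euclidean singularity $e^{(\alpha^2+\beta^2)K_{\gep,\gep'}(x,y)}\le C|x-y|^{-(\alpha^2+\beta^2)}$, is integrable over $\cD^2$ and hence justifies applying dominated convergence to \eqref{ziut}.
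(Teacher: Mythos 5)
Your part (A) is essentially the paper's own argument: discard all constraints except the one at the single scale $k_0=\log\frac{1}{|x-y|\vee\gep}$, where the Cameron--Martin shift equals $2\alpha k_0+O(1)$ and the variance is $k_0+O(1)$, and apply the Gaussian lower-tail bound \eqref{gbound}. No objection there.

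The gap is in part (B), exactly at the step you yourself identify as the main obstacle. You propose to control the tail $\bbP\bigl[\exists k>N:\ X_k(z)>k\gl-\alpha h^{(\gep,\gep')}_k(z)\bigr]$ using the bound $h^{(\gep,\gep')}_k(z)\le 2k+O(1)$. With that bound the threshold is only guaranteed to be $\ge k(\gl-2\alpha)+O(1)$, and since $\gl<2\alpha$ by \eqref{surlambda} this tends to $-\infty$ linearly in $k$; the probability that the centered Gaussian $X_k(z)$ (variance $\asymp k$) exceeds a threshold of order $-ck$ tends to $1$, so the tail contribution does not vanish and the proposed reduction to Proposition \ref{lapropo} (which concerns the unshifted threshold $k\gl$ with $\gl>\sqrt{2d}$) cannot work. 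What you are missing is that only one of the two terms in $h_k(z)=K_{\gep_k,\gep}(z,x)+K_{\gep_k,\gep'}(z,y)$ grows with $k$: for $z=x$, say, the self term is $\le k+C$, but the cross term is capped by $\log\frac{1}{|x-y|}+C$, a \emph{constant} for the fixed pair $x\ne y$ under consideration. Hence $h^{(\gep,\gep')}_k(z)\le k+\log\frac{1}{|x-y|}+C$ uniformly in $\gep,\gep'$, the threshold is $\ge k(\gl-\alpha)-C_{x,y}$ with $\gl-\alpha>0$, and a plain union bound over $k>N$ and $z\in\{x,y\}$ combined with \eqref{gbound} makes the tail vanish as $N\to\infty$ uniformly in small $\gep,\gep'$ --- no appeal to Proposition \ref{lapropo} is needed, since there is no supremum over a continuum of points here. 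This is precisely how the paper closes part (B) (its events $C^{\ell}_q(\gep,\gep')$); with that correction your three-term sandwich (finite-level convergence, exhaustion of $\bar A_q$ by the finite-level events, uniform tail) coincides with the paper's proof.
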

\noindent By Lemma \ref{anexo} and Lemma \ref{ladomine}, the integrand in the r.h.s. of \eqref{ziut} satisfies
$$ e^{(\alpha^2+\beta^2) K_{\gep,\gep'}(x,y)} \tilde \bbP_{\gep,\gep',x,y}(A_q(x,y))\le  C'_q |x-y|^{ \frac{(2\alpha-\gl)^2}{2}-(\alpha^2+\beta^2)},$$
for some constant positive constant $C'_q$,
and thus is integrable due to the assumption \eqref{surlambda}.
Hence using dominated convergence we obtain that 
\begin{equation}\label{ouicaconv}
 \lim_{\gep,\gep'\to 0} \bbE\left[ M_{\gep,q }\bar M_{\gep',q } \right]= \int_{\cD^2} e^{(\alpha^2+\beta^2) K(x,y)}\bbP[ \bar A_{q}(x,y)]\dd x \dd y<\infty.
\end{equation}
\qed

\begin{proof}[Proof of Lemma \ref{ladomine}]
 The change of measure given by \eqref{cameron} is simply a Cameron-Martin shift.
 It does not change the variance of the field $X_{k}$ but it modifies its mean,
 we have 
 \begin{equation}
  \bbE_{\gep,\gep',x,y}[X_{k}(z)]=\alpha\left( K_{\gep_k,\gep}(z,x)+ K_{\gep_k,\gep'}(z,y)\right)=: \alpha J_{\gep,\gep'}(k,z).
 \end{equation}
Hence we have 
\begin{equation}\label{scrime}
 \tilde \bbP_{\gep,\gep',x,y}(A_q(x,y))=\bbP\left[ \forall k\ge q, \forall z\in \{x,y\}, \ X_k(z)\le  k\gl - \alpha J_{\gep,\gep'}(k,z) \right].
\end{equation}
To obtain the domination \eqref{ladomi} it is sufficient to evaluate the probability 
\begin{equation}\label{probk0}
\bbP\left[ X_{k_0}(x) \le  k_0\gl - \alpha J_{\gep,\gep'}(k_0,z) \right],
\end{equation}
with $k_0(\gep,x,y):=\log \left(\frac{1}{|x-y|\vee \gep}\right)$.
We have from \eqref{stimatz} for some adequate constant $C$
\begin{equation}\begin{split}\label{wiizz}
J_{\gep,\gep'}(k,x)&\ge 2k_0-C/\alpha,\\
\Var(X_{k_0}(x))&\le  k_0+C,
\end{split}
\end{equation}
Assuming that  $k_0(\gl-2\alpha) + C$ is negative and that $k_0\ge q$
(which we can, all other cases can be treated by taking $C_q$ large since a  probability is always smaller than one)
 the probability \eqref{probk0} is smaller than 
\begin{equation}
 \bbP\left[  X_{k_0} (x)\le  k_0(\gl-2\alpha)+C \right] \le  2 e^{-\frac{(k_0(2\alpha-\gl)-C)^2}{2(k_0+C)}}\le C' (|x-y|\vee \gep)^{ \frac{(2\alpha-\gl)^2}{2} }  
\end{equation}
where we have used \eqref{wiizz} and  the following simple Gaussian bound valid for all $u\ge 0$  
\begin{equation}\label{gbound}
 \frac{1}{\sqrt{2\pi} \sigma}\int^{\infty}_u e^{-\frac{x^2}{2\sigma^2}}\dd x \le 2 e^{-\frac{u^2}{\sigma^2}}.
\end{equation}
The convergence for fixed distinct values of $x$ and $y$ is simply a consequence of the convergence
of  $J_{\gep,\gep'}(k,x)$ and $J_{\gep,\gep'}(k,y)$ to $H_k(x,y)$ and $H_k(y,x)$ respectively. Some care is needed here since we are dealing with 
countably many $X_k$'s.
Let us define for any integer $\ell\ge q$
\begin{equation}\begin{split}
B^\ell_q(\gep,\gep')&:=\left\{ \forall k\in \lint q,   \ell\rint ,\forall z\in\{x,y\},  \ X_k(z)\le   k\gl - \alpha J_{\gep,\gep'}(k,z)\right\},\\
C^\ell_q(\gep,\gep')&:= \left\{ \exists k \ge \ell+1,  \exists z\in\{x,y\}, \  X_k(z)>   k\gl - \alpha J_{\gep,\gep'}(k,z)\right\}.
 \end{split}
\end{equation}
We use the notation $B^\ell_q(0)$  for the event corresponding to $\gep, \gep'=0$.
We have from \eqref{scrime}
\begin{equation}
  \tilde \bbP_{\gep,\gep',x,y}(A_q(x,y))= \bbP[B^\ell_q(\gep,\gep')]- \bbP[B^\ell_q(\gep,\gep')\cap C^\ell_q(\gep,\gep')].
\end{equation}
Hence we have
\begin{multline}
   |\tilde \bbP_{\gep,\gep',x,y}(A_q(x,y))- \bbP[ \bar A_q(x,y)]| 
 \\  \le \big|\bbP[B^{\ell}_q(\gep,\gep')]-\bbP[B^{\ell}_q(0)]\big|+  \big|\bbP[B^{\ell}_q(0)]-\bbP[ \bar A_q(x,y)]\big|+ \bbP[C^\ell_q(\gep,\gep')].
\end{multline}
Let us fix $\delta>0$. We are first going to show that for $\ell=\ell_0(\delta,x,y)$ sufficiently large, each of the two last terms are smaller than $\delta/3$, and then conclude using the fact that since for a fixed $\ell_0$ we have
$$ \lim_{\gep,\gep'\to 0}\bbP\left[ B^{\ell_0}_q(\gep,\gep')\right]=\bbP[B^{\ell_0}_q(0)],$$
so that the first term can also be made smaller than $\delta/3$ by choosing $\gep$ and $\gep'$ small.
Since $\cap_{\ell\ge q} B^\ell_q(0)=\bar A_q(x,y)$, the second term is indeed small if $\ell_0$  sufficiently large. Now
from \eqref{stimatz} we have for every $\gep,\gep'$ and $z\in\{x,y\}$
$$J_{\gep,\gep'}(k,z)\le k+\log\frac{1}{|x-y|}+C. $$
Using the Gaussian bound \eqref{gbound} and making the value of $\ell_0$ large if necessary, this implies that for some constant $C'$ (allowed to depend on $x$ and $y$)
$$ \bbP[ C^{\ell_0}_q(\gep,\gep')]\le \bbP[\exists k \ge \ell_0+1, \exists z\in\{y,z\}, X_k(z)>   k(\gl-\alpha)- C' ].$$
The above probability can be bounded from above by something arbitrarily small if $\ell_0$ is large by using a union bound and the Gaussian tail bound \eqref{gbound} (here we are using that $\alpha<\gl$ and the fact that the variance of $X_k$ is of order $k$).

\end{proof}

\subsection{The limit does not depend on $\theta$}\label{nodepend}

Given $\theta'$ another smoothing kernel we let $X'_{\gep}$ be the regularized field obtained by convolution with $\theta'_{\gep}$ and $M_{q,\gep}'$ be the corresponding truncated partition function (based on the event $A'_{q,\lambda}$ defined as in \eqref{defaqgl} with $X$ replaced by $X'$).
We show that $\lim_{\gep\to 0}\bbE[|M_{q,\gep}- M_{q,\gep}'|^2]=0$  by showing that 
\begin{multline}
\lim_{\gep\to 0}\bbE[|M_{q,\gep}|^2]=\lim_{\gep\to 0} \bbE[|M'_{q,\gep}|^2]
=\lim_{\gep\to 0}\bbE[ M_{q,\gep} \bar M'_{q,\gep}]\\=\int_{D^2} e^{(\alpha^2+\beta^2) K(x,y)}\bbP[ \bar A_{q}(x,y)]f(x)f(y)\dd x \dd y.
\end{multline}
The two first convergence statements are special cases of \eqref{ouicaconv}.
For $\bbE[ M_{q,\gep} \bar M'_{q,\gep}]$, we just have to prove a variant of Lemma \ref{ladomine} for the adequate tilting measure, which can be done without difficulty by reproducing the exact same proof.

\medskip

\noindent \textit{About Remark \ref{wopopop}}
In \cite{LRV15}, instead of being approximated by convolutions, $X$ is given a martingale approximation (see \cite[Equation (2.2)]{LRV15}) which we denote here by $\tilde X_{\gep}$.
If similarly to what is done above, we replace $M_{q,\gep}$ by $\tilde M_{q,\gep}$ which is defined by replacing $X$ by $\tilde X$ in every definition, we can also prove in the same manner (and under the assumption of regularity given in \cite{LRV15} for the covariance kernel of $\tilde X_{\gep}$) that
 $$\lim_{\gep\to 0}\bbE[|M_{q,\gep}- \tilde M_{q,\gep}|^2]=0,$$
 and hence that our limit coincides with the chaos defined in \cite{LRV15}.

\section{Proof of Theorem \ref{decoco}} \label{complexgamma}

To prove the main result of this section, we  partly adapt the strategy used in Section \ref{secdecoco}, but we need to introduce some refinement to it because the real and imaginary part of the field cannot be treated separately anymore. Our method requires some additional technical assumption on the covariance function, which ensures that the field $X$ can be written as a sum of independent functional increments. 
It has been recently proved in \cite{junnila2019} that this decomposition assumption is satisfied locally as soon as our function $L$ in \eqref{lexpress} is sufficiently regular.

\subsection{The result for kernels admitting a nice decomposition}

We are going to prove the result with an additional assumption on the covariance kernel. We assume that $K$ can be written in the form
\begin{equation}\label{tiltz}
 K(x,y)=  Q_0(x,y)+\sum_{n\ge 1}^{\infty} Q_n(x,y),
\end{equation}
where $Q_0(x,y)$ is  positive definite (in the sense \eqref{treks}) and H\"older continuous (in both variable $x$ and $y$).
The functions 
$(Q_n)_{n\ge 1}$ are continuous positive definite function on $\cD$ satisfying 
\begin{equation}\begin{cases}\label{toltz}
Q_n(x,y)\ge 0 ,\\  Q_n(x,x)=1, \\
Q_n(x,y)=0 \quad \text{ if } |x-y|\ge e^{-n}, 
\\ |Q_n(x,y)-Q_n(x',y')|\le
Ce^n(|x-x'|+|y-y'|) 
\end{cases}\end{equation}
for every $x,x',y,y' \in \cD$.
It is not  difficult to check that these assumptions imply in particular that \eqref{tiltz} hold.
Our main task is to prove convergence of $M^{(\gamma)}_{\gep}$ in this setup.
\begin{proposition}\label{avecladeco}
 Let us assume that $K$ satisfies assumption \eqref{tiltz}- \eqref{toltz}.
 If $(\alpha,\beta)\in \mathcal P_{\mathrm{sub}}$, $\gamma=\alpha+i\beta$ and 
$M^{(\gamma)}_{\gep}(f)$ is defined as in \eqref{lesssimple}, then the following limit exists in probability and in $\bbL_1$
 
 \begin{equation}
  \lim_{\gep\to 0} M^{(\gamma)}_{\gep}(f)=M^{(\gamma)}_0(f).
 \end{equation}
 Furthermore the limit does not depend on the choice of  the smoothing kernel $\theta$.
\end{proposition}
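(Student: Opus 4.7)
My plan is to transpose the two-step scheme of Section \ref{secdecoco}: (A) introduce the truncated chaos
\begin{equation*}
M^{(\gamma)}_{\gep,q}(f):=\int_{\cD_{\gep}} e^{\gamma X_\gep(x)-\frac{\gamma^2}{2}K_{\gep}(x)}\ind_{A_{q,\gl}(x)}f(x)\dd x,
\end{equation*}
with $\gl$ satisfying \eqref{surlambda}, and show it is Cauchy in $\bbL_2$; (B) deduce convergence of the untruncated sequence via Proposition \ref{lapropo} exactly as in the proof of Theorem \ref{decoco}. Step (B) goes through verbatim. The novelty lies entirely in (A), where it is no longer possible, as in \eqref{ziut}, to average out an independent imaginary field. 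Instead, one must estimate uniformly in $(x,y)$ and then pointwise as $\gep,\gep'\to 0$ the quantity
\begin{equation*}
I_{\gep,\gep'}(x,y):=\bbE\Bigl[e^{\gamma X_\gep(x)+\bar\gamma X_{\gep'}(y)-\tfrac{\gamma^2}{2}K_\gep(x)-\tfrac{\bar\gamma^2}{2}K_{\gep'}(y)}\ind_{A_{q,\gl}(x)\cap A_{q,\gl}(y)}\Bigr],
\end{equation*}
which absent the indicator would equal $e^{|\gamma|^2 K_{\gep,\gep'}(x,y)}$, and it must be shown that the oscillating imaginary contribution $i\beta(X_\gep(x)-X_{\gep'}(y))$ does not destroy the estimate.

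The decomposition hypothesis \eqref{tiltz}--\eqref{toltz} is precisely what enables this. Writing $X=X_0+\sum_{n\geq 1}Y_n$ with independent centered Gaussian fields $Y_n$ of covariance $Q_n$, and setting $n_0:=\lfloor\log(1/(|x-y|\vee\gep\vee\gep'))\rfloor$, the local support property of $Q_n$ guarantees that $Y_n(x)$ and $Y_n(y)$ are independent for every $n>n_0$. Split $X_\gep(z)=U_\gep(z)+W_\gep(z)$, where $U_\gep$ is the convolution of $\sum_{n\leq n_0}Y_n$ with $\theta_\gep$ and $W_\gep$ the convolution of the high-frequency tail. Conditionally on the macroscopic field $U$, the pair $(W_\gep(x),W_{\gep'}(y))$ consists (up to a boundary correction due to the non-commutation of convolution by $\theta_\gep$ with the scale cutoff at $n_0$) of independent centered Gaussians. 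Integration over the $W$-contribution using $\bbE[e^{\gamma W_\gep(x)+\bar\gamma W_{\gep'}(y)}]\approx e^{\tfrac{\gamma^2}{2}\Var W_\gep(x)+\tfrac{\bar\gamma^2}{2}\Var W_{\gep'}(y)}$ cancels the ``high-frequency'' part of the renormalizing variances, reducing the computation to an expression on $U$ of the form
\begin{equation*}
\bbE\Bigl[e^{\gamma U_\gep(x)+\bar\gamma U_{\gep'}(y)-\tfrac{\gamma^2}{2}\Var U_\gep(x)-\tfrac{\bar\gamma^2}{2}\Var U_{\gep'}(y)}\ind_{\tilde A_q(x,y)}\Bigr],
\end{equation*}
where $\tilde A_q$ is an indicator event depending only on $U$ approximating $A_q$ up to a controlled error.

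One now applies a real Cameron--Martin shift of size $\alpha$ to $U$, exactly as in \eqref{cameron}. The remainder after removing the Radon--Nikodym density has modulus $e^{\tfrac{\beta^2}{2}(\Var U_\gep(x)+\Var U_{\gep'}(y))+\alpha^2\Cov(U_\gep(x),U_{\gep'}(y))}$; since each of $\Var U_\gep(x)$, $\Var U_{\gep'}(y)$ and $\Cov(U_\gep(x),U_{\gep'}(y))$ is of order $n_0\sim\log(1/(|x-y|\vee\gep))$, this modulus is of order $(|x-y|\vee\gep)^{-|\gamma|^2}$. The tilted probability of $\tilde A_q$ is then bounded, by the same Gaussian tail argument as in the proof of Lemma \ref{ladomine}, by $C_q(|x-y|\vee\gep)^{(2\alpha-\gl)^2/2}$. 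Combining the two estimates, the integrand is dominated by $C_q|x-y|^{-|\gamma|^2+(2\alpha-\gl)^2/2}$, which is integrable thanks to \eqref{surlambda}. Dominated convergence, together with the pointwise convergence of $U_\gep$ and of the relevant (co)variances, yields a finite limit; independence from the choice of $\theta$ follows by repeating the argument for a pair of different mollifiers, exactly as in Section \ref{nodepend}.

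The hardest point I anticipate is the bookkeeping around two distinct approximation errors. First, since the convolution by $\theta_\gep$ does not commute with the cutoff at scale $n_0$, the fields $W_\gep(x)$ and $W_{\gep'}(y)$ are only approximately independent of each other and of $U$, with residual cross-covariances coming from scales near $n_0$ that must be controlled via the H\"older estimate in \eqref{toltz}. Second, the truncation event $A_{q,\gl}(x)$ depends on the full mollified field $X_k=U_k+W_k$ at every scale $k\geq q$, whereas the splitting $U/W$ is tuned to the integration variables $(x,y,\gep,\gep')$; reconciling these will require either enlarging $A_{q,\gl}$ to an auxiliary event $\bar A_{q,\gl}$ adapted only to the macroscopic field (at a mild quantitative cost), or combining the argument with a uniform Gaussian tail estimate on $\sup_k|W_k|$ across scales $k\geq q$, probably through a Borel--Cantelli-type step.
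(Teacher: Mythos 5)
Your architecture (truncate, prove the truncated chaos is Cauchy in $\bbL_2$, remove the truncation via the maximum estimate) matches the paper's, and you have correctly located the difficulty: one must bound the modulus of $\bbE[:e^{i\beta(X_\gep(x)-X_{\gep'}(y))}:\ind_{A_q(x,y)}]$ by $C(|x-y|\vee\gep)^{(2\alpha-\gl)^2/2}$ even though the naive Jensen bound costs a factor $e^{\frac{\beta^2}{2}\Var(X_\gep(x)-X_{\gep'}(y))}$ that blows up as $\gep,\gep'\to 0$ for fixed $x\ne y$. But your proposed resolution --- split $X=U+W$ at the scale $n_0\sim\log(1/(|x-y|\vee\gep))$ and integrate out the high-frequency part $W$ --- does not close, and your own last paragraph concedes the reason: the truncation event constrains the field at \emph{every} scale $k\ge q$, hence is not measurable with respect to the macroscopic field $U$, so $W$ cannot be integrated out against the indicator. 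Neither of your fallbacks repairs this. Replacing $A_{q,\gl}$ by a $U$-measurable event destroys step (B), because $n_0$ depends on the integration variables $(x,y,\gep,\gep')$, so there is no single event on which the truncated and untruncated chaoses coincide for all $\gep$. A uniform tail bound on $\sup_k|W_k|$ does not help either, because the small-scale part of the constraint is not an error term to be discarded: it is precisely what produces the decay that must compensate the oscillation.

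The mechanism in the paper, absent from your proposal, is an inclusion--exclusion over the \emph{first scale at which the constraint is violated}: $\ind_{\tilde A_q}=\ind_{A_{n_0}}-\sum_{n>n_0}(\ind_{B_{n,1}}+\ind_{B_{n,2}})+\sum_{n,m>n_0}\ind_{C_{n,m}}$ as in \eqref{ladecompa}. Each term is measurable with respect to $\cF_n$ (or an asymmetric $\cG_{n,m}$), so the conditional-expectation identity \eqref{lesmezur} replaces $X_\gep(x)-X_{\gep'}(y)$ by its projection onto $\cF_n$, whose variance is only of order $n-n_0$ rather than $\log(1/\gep)$. The resulting cost $e^{\frac{\beta^2}{2}(n-n_0)}$ is then beaten by the probability of first violation at scale $n$, which by Lemma \ref{lescle} carries a factor $e^{-\frac{(\gl-\alpha)^2}{2}(n-n_0)}$; summability over $n$ uses $\beta^2<(\gl-\alpha)^2$, which is exactly where the geometry of $\cP_{\mathrm{sub}}$ and the choice $\gl>\sqrt{2d}$ enter. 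Two further points you would need to supply: the truncation events must be defined through the auxiliary martingale fields $Y_k$ of the decomposition as in \eqref{defaqq}, not through $X_{e^{-k}}$ as in Section \ref{secdecoco}, so that the decomposed events are \emph{exactly} $\cF_n$-measurable rather than approximately so; and for the doubly-violated events $C_{n,m}$ a refined conditioning on $\cG_{n,m}=\cF_n\vee\sigma(Y_l(y),\,n<l\le m)$ is required to obtain the coefficient $\frac{\beta^2}{2}$ rather than $\beta^2$ in the exponent, without which the double sum over $(n,m)$ need not converge.
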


\begin{rem}\label{laremark}
 Our assumptions on $Q$ are not all necessary. For instance the assumptions $Q_n(x,y)\ge 0 $ could be suppressed. Some mild assumptions on the decay of correlation could  replace the one about compact support and $Q_n(x,x)=1$ could be replaced by $|Q_n(x,x)-1|\le r(n)$ for a summable function $n$. As we felt that this would not present a significant extension of Theorem \ref{coco} in any case, we preferred to keep stronger assumptions in order to keep the proof as readable as possible.
 \end{rem}

\subsection{Deducing Theorem  \ref{coco} from the decomposable case}

To prove Theorem \ref{coco} building on the case of decomposable kernels, we crucially rely on a result in \cite{junnila2019} which implies that
if $L\in H^{s}_{\mathrm{loc}}(\cD\times \cD)$, $s>d$, then our kernel $K$ admits a decomposition satisfying \eqref{tiltz}-\eqref{toltz}.

\medskip

\noindent We present only a simple consequence of this result which is sufficient to our purpose.
{\red We start with a function $\kappa: \bbR_+ \to \bbR$, satisfying the following assumptions:
\begin{itemize}
 \item [(i)] $\kappa$ is Lipshitz-continuous and non-negative,
 \item [(ii)] $\kappa(0)=1$, $\kappa(r)=0$ for $r\ge 1$.
 \item [(iii)]$(x,y)\mapsto \kappa(|x-y|)$ defines a positive definite function on $\bbR^d\times \bbR^d$. 
\end{itemize}
One possibility is to define
\begin{equation}\label{defkappa}
\kappa(r):= \frac{|B(0,1)\cap B(2r{\bf e}_1,1)|}{|B(0,1)|}    
\end{equation}
where $B(x,R)$ denote the open Euclidean ball of radius $R$ and ${\bf e}_1$ is the vector $(1,0,\dots,0)$, and $|\cdot|$ is used for Lebesgue measure.}
The following proposition is a particular case of \cite[Theorem
4.5]{junnila2019}.

\begin{proposition}\label{dapopo}
  If $K$ is of the form \eqref{lexpress} with $L\in H^{s}_{\mathrm{loc}}(\cD\times \cD)$, $s>d$, and $\kappa$ is as above,  then for any $z\in \cD$ there exists $\delta(z)>0$ and $t_0(z)>0$ which are such that the function (extended by continuity on the diagonal)
  \begin{equation}
    Q_0(x,y):= L(x,y)-\int^{\infty}_{t_0+1} \kappa (e^{t}|x-y|)\dd t + \log \frac{1}{|x-y|}, 
  \end{equation}
   is a positive definite function on $B(z,\delta(z))$.
 
\end{proposition}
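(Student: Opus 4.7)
\emph{Plan.} The task splits into (i) checking that the formula defining $Q_0$ extends continuously across the diagonal, and (ii) establishing positive-definiteness on a small enough ball around $z$.

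\medskip

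\emph{Step 1: Regularity of $Q_0$.} The substitution $u=e^tr$ gives, for $r\in(0,1)$,
\begin{equation*}
\int_0^\infty\kappa(e^tr)\,dt=\int_r^1\frac{\kappa(u)}{u}\,du=\log\tfrac{1}{r}+c_\kappa+\eta(r),
\end{equation*}
where $c_\kappa:=\int_0^1(\kappa(u)-1)/u\,du$ is finite (since $\kappa$ is Lipschitz with $\kappa(0)=1$), and $\eta$ is Lipschitz on $[0,1)$ with $\eta(0)=0$. Choosing $\delta(z)<\tfrac12 e^{-(t_0+1)}\wedge\mathrm{dist}(z,\partial\cD)$ guarantees $|x-y|<e^{-(t_0+1)}$ on $B(z,\delta)^2$, so the logarithmic singularities cancel and
\begin{equation*}
Q_0(x,y)=L(x,y)-c_\kappa-\eta(|x-y|)+\int_0^{t_0+1}\kappa(e^t|x-y|)\,dt.
\end{equation*}
The right-hand side extends continuously across the diagonal, and the Sobolev embedding $H^s(\bbR^{2d})\hookrightarrow C^{s-d}$ (using $s>d$) makes $L$ Hölder continuous on $B(z,\delta)^2$.

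\medskip

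\emph{Step 2: Setting up positive-definiteness.} Write $Q_0(x,y)=\Psi_{t_0}(|x-y|)+\Phi(x,y)$, with
\begin{equation*}
\Psi_{t_0}(r):=\int_0^{t_0+1}\kappa(e^tr)\,dt,\qquad \Phi(x,y):=L(x,y)-c_\kappa-\eta(|x-y|).
\end{equation*}
Each rescaling $\kappa(e^t\cdot)$ is positive definite on $\bbR^d$ (scaling preserves positive-definiteness), hence so is $\Psi_{t_0}$, which means $\hat\Psi_{t_0}\geq 0$. By Plancherel, for $f\in C_c(B(z,\delta))$,
\begin{equation*}
\int Q_0(x,y)\,f(x)f(y)\,dx\,dy=(2\pi)^{-d}\int \hat\Psi_{t_0}(\xi)\,|\hat f(\xi)|^2\,d\xi+\int \Phi(x,y)\,f(x)f(y)\,dx\,dy.
\end{equation*}
The first term is non-negative; the plan is to choose $t_0$ and $\delta$ so that it dominates the second. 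A direct change of variable yields $\hat\Psi_{t_0}(\xi)=|\xi|^{-d}\int_{|\xi|e^{-(t_0+1)}}^{|\xi|}v^{d-1}\hat\kappa_{\mathrm{rad}}(v)\,dv$, which is bounded below by a positive constant on $|\xi|\lesssim e^{t_0+1}$ and matches, in scaling, the $|\xi|^{-d}$ symbol of the $\log$-kernel.

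\medskip

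\emph{Step 3: Main obstacle.} The principal difficulty is that $\Phi$ is not translation invariant, so the Fourier decomposition does not give a direct pointwise symbol comparison. My plan is to introduce a smooth cutoff $\rho\in C_c^\infty(B(z,\delta))$ with $\rho\equiv 1$ on $B(z,\delta/2)$ and work with $(\rho\otimes\rho)\,Q_0$, exploiting that $(\rho\otimes\rho)\,L\in H^s(\bbR^{2d})$ has a continuous, $L^\infty$-bounded Fourier transform --- this is exactly where the hypothesis $s>d$ is used, via $H^s(\bbR^{2d})\hookrightarrow C^{0}$. One then compares this bound against the Fourier transform of $(\rho\otimes\rho)\,\Psi_{t_0}(|x-y|)$, whose mass concentrates on the anti-diagonal $\{\xi+\eta=0\}\subset\bbR^{2d}$ with density explicitly controlled by $t_0$. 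Choosing $t_0$ large and $\delta$ small (with $e^{t_0}\delta$ kept bounded so that the Hölder control on $\Phi$ and the Fourier lower bound on $\Psi_{t_0}$ are simultaneously effective) forces the symbol of $(\rho\otimes\rho)Q_0$ to be non-negative. Making this pointwise Fourier estimate rigorous is the most delicate step, and is precisely the content of \cite[Theorem 4.5]{junnila2019}.
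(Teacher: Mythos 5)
The paper gives no proof of this proposition at all: it is stated verbatim as "a particular case of \cite[Theorem 4.5]{junnila2019}", so there is nothing internal to compare your argument against except that citation. Your Steps 1--2 are correct and are a useful elaboration of what that citation is doing: the computation $\int_0^\infty\kappa(e^tr)\,dt=\log(1/r)+c_\kappa+\eta(r)$ with $\eta$ Lipschitz is right (and is essentially re-used by the paper when it verifies \eqref{tiltz}--\eqref{toltz} for the $Q_n$, $n\ge1$), the cancellation of the logarithmic singularity on $B(z,\delta)^2$ is right, and the identity $\hat\Psi_{t_0}(\xi)=|\xi|^{-d}\int_{|\xi|e^{-(t_0+1)}}^{|\xi|}v^{d-1}\hat\kappa_{\mathrm{rad}}(v)\,dv$ checks out.

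The gap is exactly where you say it is, and it is a genuine one: Step 3 is a plan, not an argument. The whole content of the proposition is that the non--translation-invariant perturbation $\Phi$ (essentially $L$ plus Lipschitz corrections) can be absorbed by the positive symbol of $\Psi_{t_0}$ after localizing to a small ball, and this is precisely what cannot be obtained by a soft Plancherel comparison: multiplying by $\rho\otimes\rho$ destroys positive-definiteness of the $\Psi_{t_0}$ part as well, so one cannot simply compare the two localized symbols pointwise; moreover $\|\widehat{(\rho\otimes\rho)L}\|_{L^\infty}$ does not shrink as $\delta\to0$ without further argument, whereas the "mass near the anti-diagonal" of the localized $\Psi_{t_0}$ does spread out. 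Making the trade-off between $t_0$ large and $\delta$ small rigorous under only the hypothesis $L\in H^s_{\mathrm{loc}}$, $s>d$, is the actual theorem of Junnila--Saksman--Webb, and your write-up (candidly) defers to it. So the proposal is best read not as an independent proof but as a correct reduction of the proposition to \cite[Theorem 4.5]{junnila2019} --- which is the same logical status the statement has in the paper, where it is simply quoted.
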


\begin{proof}[Deducing Theorem \ref{coco} from Propositions \ref{avecladeco} and \ref{dapopo}]

Note that from Sobolev and Morrey's inequality, the assumption $L\in H^{s}_{\mathrm{loc}}(\cD \times \cD)$ , $s>d$, implies that  $L(x,y)$ is locally H\"older continuous and thus so is $Q_0$ (the reader can check that $Q_0-L$ is Lipshitz).
Now defining for $n\ge 1$
\begin{equation}
 Q_n(x,y):= \int^{t_0+n+1}_{t_0+n} \kappa (e^{t}|x-y|)\dd t,
\end{equation}
it is easy to check that conditions \eqref{tiltz}-\eqref{toltz} are satisfied on $B(z,\delta(z))$.
Now since the support of $f$ is compact, we can cover it by a finite collection of balls $B(z_i,\delta_i)^k_{i=1}$ obtained with Proposition \ref{dapopo}. {\red Using a partition of unity we can write  $f$ as a sum $f=\sum_{i=1}^k f_i$ where each $f_i$ has a support included in $B(z_i,\delta_i)^k_{i=1}$.
Then  we establish the convergence of  
\begin{equation}
M^{(\gamma)}_{\gep}(f)= \sum_{i=1}^k M^{(\gamma)}_{\gep}(f_i)
\end{equation}
simply using Proposition \ref{avecladeco} for each term of the sum.}

\end{proof}
\subsection{Extending the probability space and truncating the partition function}\label{lasecsec}

To prove Proposition \ref{avecladeco}, we are going to work in an extended probability space. Together with the Gaussian process $X$ indexed by $\cM_K$ (recall Section \ref{leledef})
we define a process $(Y_n(x))_{n\ge 1,x\in \cD}$ such that $(X,Y)$ is jointly Gaussian and centered. The covariance function of $Y$ is given by 
\begin{equation}\label{covarian1}
 \bbE\left[ Y_n(x) Y_m(y) \right]= \sum_{k=1}^{m\wedge n} Q_k(x,y)=:K_{n\wedge m}(x,y),
\end{equation}
and the covariance with $X$ is given by
 $\bbE[ Y_n(x) \langle X,\mu \rangle ]= \int_\cD K_n(x,z) \mu(\dd z),$ for $\mu\in \cM_K$.
In particular we have for $y\in \cD_{\gep}$ (recall \eqref{cdgep})
\begin{equation}\label{covarian2}
 K_{n,\gep}(x,y):=\bbE\left[ Y_n(x) X_\gep(y) \right] =\int_{\cD} K_n(x,z) \theta_\gep(x-z)\dd z.
\end{equation}
We consider for every $n$ a continuous version of the field $Y_n(\cdot)$ (which exists since $K_n$ is Lipshitz).
{\red
The existence (and uniqueness in distribution) of such a Gaussian process $(\langle X, \mu\rangle_{\mu \in \mathcal M_K},(Y_n(x))_{n\ge 1, x\in \cD})$ follows simply from Kolmogorov's extension Theorem after checking that for finite dimensional marginals, the covariance matrices given by \eqref{hatK}, \eqref{covarian1} and \eqref{covarian2} are  positive definite. }

\medskip

We assume (without loss of generality) that both $\alpha$ and $\beta$ are positive, 
that $(\alpha,\beta)\in \cP_{\mathrm{sub}}$ with $\alpha>\sqrt{d/2}$ (the other cases belong to the $\bbL_2$ region), and consider $\lambda$ satisfying \eqref{surlambda}.
We can now introduce a truncated version of the partition function, similar to the one considered in the previous section, but with $X_{e^{-k}}$ replaced by $Y_k$.
We recycle the notation of the previous section, and redefine the events $A_{q,\gl}(x)$, $\cA_{q,\gl}$ by setting 
\begin{equation}\begin{split}\label{defaqq}
                 A_{q,\gl}(x)&:= \left\{  \forall k \ge  q, \ Y_k(x) \le  k \gl  \right\},\\
                  \cA_{q,\gl}(f):= \bigcap_{x\in \Supp(f)}  & A_{q,\gl}(x)= \left\{  \forall k\ge q, \quad  \sup_{x\in \Supp(f)} Y_k(x) \le  k \gl  \right\},
                \end{split}
\end{equation}
and then set 
\begin{equation}
 M^{(\gamma)}_{\gep,q}= \int_{\cD} e^{\gamma X_{\gep}(x)-\frac{\gamma^2}{2}K_{\gep}(x)}\ind_{A_{q,\gl}(x)} f(x)\dd x.
\end{equation}
Then we proceed as in\ref{secstratz}.  Firs we show that $M^{(\gamma)}_{\gep,q}$ converges for every value of $q$. 
\begin{proposition}\label{secondacase}
 For every $q\ge 1$ the sequence  $(M^{(\gamma)}_{\gep,q })_{\gep\in(0,1]}$ is Cauchy in $\bbL_2$. In particular we the existence of the following limit
 
\begin{equation}
 \lim_{\gep\to 0}M^{(\gamma)}_{\gep,q }(f):= M^{(\gamma)}_{0,q }(f).
\end{equation}
Furthermore the limit does not depend on the choice of $\theta$.
  
 \end{proposition}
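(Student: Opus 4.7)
The scheme of proof is that of Proposition \ref{firstacase}. By polarization (cf.~\eqref{decomp}), establishing $\bbL_2$-Cauchyness reduces to showing the convergence of $\bbE[M^{(\gamma)}_{\gep,q}(f)\overline{M^{(\gamma)}_{\gep',q}(f)}]$ as $\gep,\gep'\to 0$. Writing this quantity as a double integral via Fubini, the heart of the matter is to evaluate and estimate, for $x,y$ in the support of $f$,
\begin{equation*}
I_{\gep,\gep'}(x,y):=e^{-\frac{\gamma^2}{2}K_\gep(x)-\frac{\bar\gamma^2}{2}K_{\gep'}(y)}\bbE\bigl[e^{\gamma X_\gep(x)+\bar\gamma X_{\gep'}(y)}\ind_{A_{q,\gl}(x)\cap A_{q,\gl}(y)}\bigr].
\end{equation*}

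The central computation is a Cameron--Martin shift with the complex weight $\gamma X_\gep(x)+\bar\gamma X_{\gep'}(y)$. Using the decomposition $X=X^{(Q_0)}+\sum_{k\ge 1}X^{(Q_k)}$ provided by \eqref{tiltz}--\eqref{toltz}, and noting that $X^{(Q_0)}_\gep$ is independent of the $\sigma$-algebra generated by $(Y_n)_{n\ge 1}$ while having uniformly bounded variance (because $Q_0$ is continuous on the support of $f$), the $X^{(Q_0)}$-contribution to the complex tilt produces only a bounded multiplicative correction. For the $(X^{(Q_k)})_{k\ge 1}$ part, which is $Y$-measurable, the tilting shifts the mean of each $Y_k(z)$ by the complex quantity $\gamma K_{k,\gep}(z,x)+\bar\gamma K_{k,\gep'}(z,y)$. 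Decomposing this shift into real and imaginary parts, applying the real Cameron--Martin transform for the real part, and treating the residual imaginary oscillation by a conditional-Gaussian computation on the $Y$-space, one obtains after simplification that
$|I_{\gep,\gep'}(x,y)|\le C_q\, e^{|\gamma|^2 K_{\gep,\gep'}(x,y)}\,\tilde\bbP_{\gep,\gep',x,y}\bigl(A_{q,\gl}(x)\cap A_{q,\gl}(y)\bigr)$,
where $\tilde\bbP_{\gep,\gep',x,y}$ denotes the Gaussian measure under which $Y_k(z)$ has mean $\alpha(K_{k,\gep}(z,x)+K_{k,\gep'}(z,y))$ (and unchanged covariance).

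Once this bound is in hand, the argument proceeds exactly as in Section \ref{secdecoco}. An analogue of Lemma \ref{ladomine} gives: using \eqref{stimatz} adapted to the partial-sum kernel $K_k$, the shift of $Y_k(z)$ at $z\in\{x,y\}$ is $\ge 2\alpha k-O(1)$ at the critical scale $k_0:=\log(1/(|x-y|\vee\gep\vee\gep'))$; the Gaussian tail bound \eqref{gbound} then yields
$\tilde\bbP_{\gep,\gep',x,y}\bigl(A_{q,\gl}(x)\cap A_{q,\gl}(y)\bigr)\le C_q(|x-y|\vee\gep\vee\gep')^{(2\alpha-\gl)^2/2}$.
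Combining with $e^{|\gamma|^2 K_{\gep,\gep'}(x,y)}\le C(|x-y|\vee\gep\vee\gep')^{-|\gamma|^2}$ from Lemma \ref{anexo}, the integrand is dominated by $C_q|x-y|^{-(|\gamma|^2-(2\alpha-\gl)^2/2)}$, which is integrable on $\cD\times\cD$ thanks to the choice \eqref{surlambda}. Pointwise convergence of the integrand as $\gep,\gep'\to 0$ (for fixed $x\ne y$) follows from the same truncation-and-tail argument as in Lemma \ref{ladomine}(B), and dominated convergence concludes the proof.

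The main technical obstacle is the Cameron--Martin step with the complex weight. In Proposition \ref{firstacase} the independence of the imaginary field $Y$ from $X$ allowed one to integrate out $Y$ first, producing a clean factor $e^{\beta^2 K_{\gep,\gep'}(x,y)}$, and the remaining real Cameron--Martin shift on $X$ interacted with the event in a straightforward manner. Here the truncation event is coupled to the tilting variable through the common Gaussian structure of $(X, Y)$, and the decomposition assumption \eqref{tiltz}--\eqref{toltz} is essential: it confines the ``irregular'' part of $K$ into the $(Y_k)_{k\ge 1}$ summands, whose contribution to the complex tilt can be tracked explicitly, while the bounded remainder $Q_0$ is dispatched separately. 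Independence of the limit from the smoothing kernel $\theta$ follows by running the same computation with a second kernel $\theta'$ in place of one of the two copies, as in Section \ref{nodepend}.
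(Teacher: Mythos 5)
Your reduction to the convergence of $\bbE[M^{(\gamma)}_{\gep,q}(f)\overline{M^{(\gamma)}_{\gep',q}(f)}]$, the use of the decomposition \eqref{tiltz}--\eqref{toltz}, the probability estimate $\tilde\bbP_{\gep,\gep',x,y}(A_q(x,y))\le C_q(|x-y|\vee\gep)^{(2\alpha-\gl)^2/2}$ and the final domination via \eqref{surlambda} all match the paper. But the central inequality you assert, namely
\begin{equation*}
|I_{\gep,\gep'}(x,y)|\le C_q\, e^{|\gamma|^2 K_{\gep,\gep'}(x,y)}\,\tilde\bbP_{\gep,\gep',x,y}\bigl(A_{q,\gl}(x)\cap A_{q,\gl}(y)\bigr),
\end{equation*}
is exactly the hard part of the proof, and the phrase ``treating the residual imaginary oscillation by a conditional-Gaussian computation on the $Y$-space, one obtains after simplification'' hides a genuine gap. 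After the real Cameron--Martin shift by $\alpha(X_\gep(x)+X_{\gep'}(y))$ one is left with $\bbE[\,:e^{i\beta(X_\gep(x)-X_{\gep'}(y))}:\ind_{\tilde A_{q,\gep,\gep'}(x,y)}]$ (this is \eqref{lootz}). The modulus of the Wick exponential is the deterministic factor $e^{\frac{\beta^2}{2}\Var(X_\gep(x)-X_{\gep'}(y))}$, and $\Var(X_\gep(x)-X_{\gep'}(y))=K_\gep(x)+K_{\gep'}(y)-2K_{\gep,\gep'}(x,y)$ diverges like $\log(1/\gep')$ as $\gep'\to 0$ for fixed $x\ne y$. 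So the naive bound ``(modulus of oscillating factor) $\times$ (probability of the event)'' is useless, and no single conditioning rescues it either, because the truncation event $\tilde A_{q,\gep,\gep'}(x,y)$ involves \emph{all} the increments $Y_k$, $k\ge q$, hence is not measurable with respect to any finite $\cF_n$.

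What the paper actually does (Propositions \ref{ladomine2} and \ref{lezestim}) is decompose the indicator $\ind_{\tilde A_{q,\gep,\gep'}}$ as the algebraic sum \eqref{ladecompa} over first-violation events $A^{(\gep,\gep')}_{n_0}$, $B^{(\gep,\gep')}_{n,j}$, $C^{(\gep,\gep')}_{n,m}$, each measurable with respect to a finite $\sigma$-algebra $\cF_n$ or $\cG_{n,m}$; on each piece the field is replaced by its conditional expectation, whose variance is only $O(n\vee m-n_0)$ rather than $O(\log(1/\gep'))$, at the price of a probability factor decaying like $e^{-\frac{(\gl-\alpha)^2}{2}(n\vee m-n_0)}$. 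The resulting bound is a \emph{sum} over $n,m$ whose summability requires $\beta^2<(\gl-\alpha)^2$ --- a second, independent use of the subcriticality condition $|\beta|<\sqrt{2d}-|\alpha|$ that your outline never confronts --- and even then the doubly-indexed terms $C_{n,m}$ require the finer conditioning on $\cG_{n,m}=\cF_n\vee\sigma(Y_l(y),\,n<l\le m)$ to get the factor $\beta^2/2$ instead of $\beta^2$ in the exponent. Your clean product bound simply does not arise as an intermediate step, and asserting it amounts to assuming the conclusion of the most delicate estimate in the section.
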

Then we prove that for large $q$, $M^{(\gamma)}_{\gep,q}$ coincides with $M^{(\gamma)}_{\gep}$ 
with high probability, which is a consequence of the following result.
 
\begin{proposition}\label{lapropo2}
 We have for any $\gl>\sqrt{2d}$ 
 \begin{equation}
  \lim_{q\to \infty} \bbP[\cA_{q,\gl}]=1.
 \end{equation}

\end{proposition}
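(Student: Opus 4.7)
The plan is a Borel--Cantelli style argument. Writing $E_k := \{\sup_{x\in \Supp(f)} Y_k(x) > k\gl\}$, we have $\bbP[\cA_{q,\gl}^c] \le \sum_{k\ge q}\bbP[E_k]$, so it suffices to show that $\sum_{k\ge 1}\bbP[E_k]$ is finite. The pointwise variance of $Y_k$ satisfies $\bbE[Y_k(x)^2] = \sum_{j=1}^k Q_j(x,x) = k$, which yields the single-point Gaussian bound $\bbP[Y_k(x) > k\gl] \le e^{-k\gl^2/2}$.

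The second key ingredient is a uniform modulus-of-continuity estimate. Using $Q_j(x,x)=1$ together with the Lipschitz bound in \eqref{toltz}, one checks that
$$\Var(Y_k(x)-Y_k(y)) = \sum_{j=1}^k\bigl[Q_j(x,x)+Q_j(y,y)-2Q_j(x,y)\bigr] \le C\bigl(1 + e^k|x-y|\bigr),$$
so that on any Euclidean ball $B(x_0,e^{-k})$ the canonical pseudometric $d_Y(x,y)=\sqrt{\Var(Y_k(x)-Y_k(y))}$ has bounded diameter and covering numbers that grow polynomially in $1/\delta$ after the natural rescaling. Dudley's entropy bound therefore gives $\bbE\bigl[\sup_{x\in B(x_0,e^{-k})}(Y_k(x)-Y_k(x_0))\bigr]\le C_1$ uniformly in $k$ and $x_0$, and Borell--TIS then yields, for any fixed $\delta>0$ and $k$ large enough,
$$\bbP\!\left[\sup_{x\in B(x_0,e^{-k})}(Y_k(x)-Y_k(x_0)) > k\delta\right] \le e^{-ck^2}.$$

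Next I would cover the compact set $\Supp(f)$ by $N_k \le Ce^{kd}$ Euclidean balls $B(x_i^{(k)},e^{-k})$ and choose $\delta > 0$ small enough that $\gl-\delta > \sqrt{2d}$ (possible since $\gl>\sqrt{2d}$). Writing $\sup_{x\in \Supp(f)}Y_k(x) \le \max_i Y_k(x_i^{(k)}) + \max_i \sup_{x\in B(x_i^{(k)},e^{-k})}(Y_k(x)-Y_k(x_i^{(k)}))$ one obtains
$$\bbP[E_k] \le \bbP\!\left[\max_{i\le N_k} Y_k(x_i^{(k)}) > k(\gl-\delta)\right] + \bbP\!\left[\max_{i\le N_k} \sup_{x\in B(x_i^{(k)},e^{-k})}\!(Y_k(x)-Y_k(x_i^{(k)})) > k\delta\right].$$
The first term is bounded via the single-point Gaussian tail and a union bound by $N_k e^{-k(\gl-\delta)^2/2}\le C e^{-k[(\gl-\delta)^2/2-d]}$, which is summable since $(\gl-\delta)^2/2>d$. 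The second is bounded via the oscillation estimate and a union bound by $N_k e^{-ck^2}\le Ce^{kd - ck^2}$, also summable. Hence $\sum_k\bbP[E_k]<\infty$ and $\bbP[\cA_{q,\gl}^c]\to 0$ as $q\to\infty$.

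The main technical obstacle is the uniformity of the oscillation bound across all scales $k$ and all net centers $x_0^{(k)}$: this is where the Lipschitz control $|Q_n(x,y)-Q_n(x',y')|\le Ce^n(|x-x'|+|y-y'|)$ from \eqref{toltz} is used essentially, ensuring that after rescaling $x\mapsto e^kx$ the field looks the same on each ball of radius $e^{-k}$ so that Dudley and Borell--TIS give constants independent of $k$.
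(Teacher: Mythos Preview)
Your proof is correct and follows the same overall strategy as the paper's (Appendix~\ref{prooflapropo}): bound $\bbP[E_k]$ exponentially via a discretization of $\Supp(f)$, a union bound over net points using the single-point Gaussian tail, and a separate control of the oscillation within each cell coming from the Lipschitz estimate in \eqref{toltz}. The two arguments differ only in packaging: the paper uses a dyadic grid of mesh $e^{-(1+\delta)k}$ (with $2d(1+\delta)<\gl^2$) and a hands-on Kolmogorov--Chentsov chaining to show the oscillation is at most $1$, whereas you use balls of radius $e^{-k}$ and invoke Dudley plus Borell--TIS to get the oscillation below $k\delta$. Both choices give summable tails; yours is slightly more streamlined, while the paper's avoids citing the concentration inequalities as black boxes.
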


\subsection{Reducing the proof of Proposition \ref{secondacase} to a domination statement}

As in previous cases, we only need to  prove that $\bbE \left[ M^{(\gamma)}_{\gep,q}(f) \overline{M} ^{(\gamma)}_{\gep',q}(f)\right]$ converges to a finite limit when both $\gep$ and $\gep'$ go to zero (the fact that the limit does not depend on $\theta$ follows from the argument developed in Section \ref{nodepend} which also applies to the present case).
Again we drop the dependence in $\gamma$ and $f$ in the notation. {\red We further always assume that (recall \eqref{cdgep})
\begin{equation}\label{always}
  \Supp(f) \subset\cD_{\gep}.
\end{equation}
For all results  in the remainder of this section, the constant $C$ are allowed to depend on $K$, $\gamma$, $q$, $\Supp(f)$ and $\theta$, but not on $x$, $y$, $\gep$ and $\gep'$.}
Setting $A_q(x,y):= A_{q,\gl}(x)\cap A_{q,\gl}(y)$
and interpreting the real part of the exponential tilt as a change of measure (we use the definition \eqref{cameron} for 
$\tilde \bbP_{\gep,\gep',x,y}$), we have
\begin{equation}\begin{split}\label{letrucaudessus}
 \bbE \left[M_\gep \overline{M} _{\gep'}\right]
 &=\int_{\cD^2} \bbE\left[e^{\gamma X_{\gep}(x)+\bar \gamma X_{\gep'}(y)-
 \frac{\gamma^2 K_{\gep}(x)+\bar \gamma^2  K_{\gep'}(y)}{2}} \ind_{A_q(x,y)}\right]f(x)f(y)\dd x \dd y
 \\&=  \int_{\cD^2}e^{\alpha^2 K_{\gep,\gep'}(x,y)+ \left(
 \frac{\beta^2}{2} -i\alpha\beta\right) K_{\gep}(x)+ \left(\frac{\beta^2}{2} +i\alpha\beta\right) K_{\gep'}(y) } \\
 & \quad \quad \quad \quad  \quad \quad \quad  \quad\times \tilde \bbE_{x,y,\gep,\gep'} \left[ e^{ i\beta \left(X_{\gep}(x)- X_{\gep}(y)\right) }\ind_{A_q(x,y)}\right]f(x)f(y)\dd x \dd y
 \end{split}
\end{equation}
As noticed before, under $\tilde \bbP_{\gep,\gep',x,y}$, the mean of the field is shifted and its covariance is preserved. More precisely we have for any $\eta\in \{\gep,\gep'\}$, $n\ge 1$ and $z\in \cD_{\gep}$ 
\begin{equation}\label{leshift}\begin{split}
 \tilde \bbE_{\gep,\gep',x,y}[X_{\eta}(z)]=\alpha \left( K_{\gep,\eta}(x,z)+K_{\gep',\eta}(x,z)\right),\\
  \tilde \bbE_{\gep,\gep',x,y}[Y_n(z)]=\alpha \left(K_{n,\gep}(z,x)+K_{n,\gep'}(z,y)\right).
  \end{split}
\end{equation}
We introduce the functions $L_{n,\gep,\gep'}$ and $L_n$ defined by 
(these functions depend also on $x$ and $y$ but we want to keep the notation as light as possible)
\begin{equation}\label{defL}
L_{n,\gep,\gep'}(z):= K_{n,\gep}(z,x)+K_{n,\gep'}(z,y) \text{ and } L_n(z)=K_{n}(z,x)+K_{n}(z,y).
\end{equation}
Then setting  
\begin{equation}
 \tilde A_{q,\gep,\gep'}(x,y):= \left\{ \forall n\ge q,  \forall z\in \{x,y\},\ Y_n(z) \le \gl n-\alpha L_{n,\gep,\gep'}(z) \right\}
\end{equation}
we deduce from \eqref{letrucaudessus} and \eqref{leshift} that 
\begin{equation}\label{lootz}
 \bbE \left[M_\gep \overline{M} _{\gep'}\right]  = \int_{\cD^2} e^{ (\alpha^2+\beta^2) K_{\gep,\gep'}(x,y) }
\bbE\left[ :e^{ i\beta \left(X_{\gep}(x)- X_{\gep'}(y)\right)}: \ind_{\tilde A_{q,\gep,\gep'}(x,y)}\right] 
  f(x)f(y)\dd x \dd y,
\end{equation}
where we have used the Wick exponential notation for a centered Gaussian variable $Z$
\begin{equation}
:e^{u Z}:\ := e^{uZ-\frac{u^2}{2}\bbE[Z^2]}.
\end{equation}
To conclude we use the following result, which is  analogous to Lemma \ref{ladomine} and use dominated convergence.

\begin{proposition}\label{ladomine2}
There exists a constant $C>0$ such that  for  every $\gep'\le \gep$ and   $x,y\in \Supp(f)$, 
\begin{equation}\label{xdomine}
\left|\bbE\left[ : e^{ i\beta \left(X_{\gep}(x)- X_{\gep'}(y)\right)}:\ind_{\tilde A_{q,\gep,\gep'}(x,y)}\right]\right|\le C (|x-y|\vee \gep)^{\frac{(2\alpha-\gl)^2}{2}} 
\end{equation}
Furthermore the  above expectation admits a limit when $\gep$ and $\gep'$ both go to zero.
\end{proposition}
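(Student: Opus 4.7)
Set $k_0=k_0(x,y,\gep):=\lceil\log(1/(|x-y|\vee\gep))\rceil$, denote $\cF_{k_0}:=\sigma(Z_1,\ldots,Z_{k_0})$ with $Z_j:=Y_j-Y_{j-1}$, and write $Z_{k,\eta}(z):=\int Z_k(w)\theta_\eta(z-w)\dd w$. Decompose $X_\gep(x)-X_{\gep'}(y)=L+H$ with $L:=\sum_{k=1}^{k_0}[Z_{k,\gep}(x)-Z_{k,\gep'}(y)]=\bbE[X_\gep(x)-X_{\gep'}(y)\,|\,\cF_{k_0}]$ the $\cF_{k_0}$-measurable low-frequency part and $H$ its orthogonal complement (independent of $\cF_{k_0}$). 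Split $\tilde A_{q,\gep,\gep'}(x,y)=\tilde A_\le\cap \tilde A_>$ according to whether the constraining index $k$ in \eqref{defaqq} satisfies $k\le k_0$ or $k>k_0$; the event $\tilde A_\le$ is $\cF_{k_0}$-measurable. Since $L$ and $H$ are independent Gaussians, $:\!e^{i\beta(L+H)}\!:\,=\,:\!e^{i\beta L}\!:\,\cdot\,:\!e^{i\beta H}\!:$, and conditioning on $\cF_{k_0}$ yields $\bbE[:\!e^{i\beta(L+H)}\!:\ind_{\tilde A}]=\bbE[:\!e^{i\beta L}\!:\ind_{\tilde A_\le}\,\Psi(Y_{k_0}(x),Y_{k_0}(y))]$, where $\Psi(a_1,a_2):=\bbE[:\!e^{i\beta H}\!:\ind_{\tilde A_>(a_1,a_2)}]$.

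\textbf{Bounded variance of $L$ and factorization of $\Psi$.} Using the Lipschitz condition in \eqref{toltz}, $\Var(Z_k(x)-Z_k(y))=2(1-Q_k(x,y))\le 2Ce^k|x-y|$, with an analogous bound for the convolved variant; geometric summation over $k\le k_0$ together with $e^{k_0}(|x-y|\vee\gep)=O(1)$ gives $\Var(L)\le C$, whence $|:\!e^{i\beta L}\!:|=e^{\beta^2\Var(L)/2}\le C$ pointwise. For $k>k_0$ we have $e^{-k}<|x-y|\vee\gep$, so the support condition $Q_k(u,v)=0$ for $|u-v|>e^{-k}$ makes the restrictions of $Z_k$ near $x$ and near $y$ independent (provided $\gep,\gep'$ are smaller than a fixed multiple of $|x-y|$; the finitely many borderline scales contribute only a bounded prefactor). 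Hence $H=H_x-H_y$ with $H_x\perp H_y$, and $\tilde A_>=\tilde A_>^x(a_1)\cap\tilde A_>^y(a_2)$ with the two events independent as well, which factorizes $\Psi(a_1,a_2)=\Psi_x(a_1)\overline{\Psi_y(a_2)}$, where $\Psi_z(a):=\bbE[:\!e^{i\beta H_z}\!:\ind_{\tilde A_>^z(a)}]$.

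\textbf{Main step: $\sup_a|\Psi_x(a)|\le C$.} Write $\Psi_x(a)=1-\sum_{k>k_0}\bbE[:\!e^{i\beta H_x}\!:\ind_{\{\tau=k\}}]$, with $\tau:=\inf\{k>k_0:B_k>b_k(a)\}$, $B_k:=Y_k(x)-Y_{k_0}(x)$, $b_k(a):=\gl k-\alpha L_{k,\gep,\gep'}(x)-a$, and observe that $M_k:=\bbE[:\!e^{i\beta H_x}\!:\,|\,\cF_k]=\,:\!e^{i\beta\,\bbE[H_x|\cF_k]}\!:$ is a complex martingale with $|M_k|\le e^{\beta^2(k-k_0)/2}$ (since $\Var(\bbE[H_x|\cF_k])\le k-k_0$). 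The tower property gives $\bbE[:\!e^{i\beta H_x}\!:\ind_{\{\tau=k\}}]=\bbE[M_k\ind_{\{\tau=k\}}]$. On the range of $a$ relevant to $\tilde A_\le$, namely $a\le k_0(\gl-2\alpha)+C$, one has $b_k(a)\ge(\gl-\alpha)(k-k_0)+O(1)$, and the Gaussian tail bound \eqref{gbound} applied to $B_k\sim N(0,k-k_0)$ yields $\bbP(\tau=k)\le \bbP(B_k>b_k(a))\le Ce^{-(\gl-\alpha)^2(k-k_0)/2}$. The key inequality $(\gl-\alpha)^2>\beta^2$ (an immediate consequence of $\gl>\sqrt{2d}$ in \eqref{surlambda} and $|\beta|<\sqrt{2d}-|\alpha|$) makes $\sum_{k>k_0}|\bbE[M_k\ind_{\{\tau=k\}}]|$ geometrically convergent uniformly in $a$, giving $|\Psi_x(a)|\le C$; the same argument (with an analogous split of the sum when $k$ crosses $\log(1/\gep)$ and $|M_k|$ saturates) treats $\Psi_y$ by symmetry.

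\textbf{Conclusion and passage to the limit.} Combining the uniform bounds yields $|\bbE[:\!e^{i\beta(L+H)}\!:\ind_{\tilde A}]|\le C\,\bbP(\tilde A_\le)$, and the estimate $\bbP(\tilde A_\le)\le C(|x-y|\vee\gep)^{(2\alpha-\gl)^2/2}$ follows exactly as in the proof of Lemma \ref{ladomine}, by retaining only the constraint at scale $k=k_0$ and applying \eqref{gbound} to the centered Gaussian $Y_{k_0}(z)$ (with variance $k_0+O(1)$) against a threshold $\le k_0(\gl-2\alpha)+C$, i.e.\ at a distance $\ge k_0(2\alpha-\gl)-C$ from the mean. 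Passage to the limit $\gep,\gep'\to 0$ for fixed $x\ne y$ in $\Supp(f)$ then follows from Lemma \ref{anexo} (pointwise convergence of the kernels $K_{n,\gep,\gep'}$ and $K_{\gep,\gep'}$) together with the uniform dominator just established, in analogy with \eqref{ouicaconv}. The main obstacle is the third step: the naive Cauchy--Schwarz bound $|\Psi_x(a)|^2\le e^{\beta^2\Var(H_x)}\bbP(\tilde A_>^x(a))^2$ loses a divergent factor $(|x-y|/\gep)^{\beta^2}$, so one genuinely needs to exploit the martingale/oscillation structure of the complex Wick exponential against the barrier-crossing event.
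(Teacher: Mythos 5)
Your overall architecture is sound and it correctly isolates the real difficulty (the naive Jensen/Cauchy--Schwarz bound losing a factor $(\,(|x-y|\vee\gep)/\gep\,)^{-\beta^2}$-type divergence, which you fix by paying only $e^{\beta^2(k-k_0)/2}$ per scale against a crossing probability $e^{-(\gl-\alpha)^2(k-k_0)/2}$, with $(\gl-\alpha)^2>\beta^2$). But there is a genuine gap in your third step: the factorization $\Psi(a_1,a_2)=\Psi_x(a_1)\overline{\Psi_y(a_2)}$ rests on the claim that for $k>k_0$ the increments $Z_k$ near $x$ and near $y$ are independent because $e^{-k}<|x-y|\vee\gep$. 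This is only true when $|x-y|$ dominates, i.e.\ under your parenthetical hypothesis $\gep\le c|x-y|$. The proposition, however, must hold uniformly for \emph{all} $x,y\in\Supp(f)$ and $\gep'\le\gep$, in particular when $|x-y|\le\gep$ (this is exactly the half of the statement where $(|x-y|\vee\gep)=\gep$, and it cannot be discarded: it is the regime that matters in the integral \eqref{lootz} near the diagonal). When $|x-y|\ll\gep$ one has $k_0=\lceil\log(1/\gep)\rceil$, and \emph{every} scale $k$ with $k_0<k<\log(1/|x-y|)$ has $Q_k(x,y)>0$; the number of such correlated scales is $\log(\gep/|x-y|)$, which is unbounded, so this is not "finitely many borderline scales contributing a bounded prefactor". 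In that regime $H_x$ and $H_y$ are strongly correlated (identical when $x=y$) and $\Psi$ does not factorize, so your main step as written does not apply there.

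The gap is fixable, and it is instructive to compare with how the paper proceeds. The paper does not factorize: it expands $\ind_{\tilde A_{q,\gep,\gep'}}$ by inclusion--exclusion over the first crossing scales at $x$ and at $y$ (the events $B^{(\gep,\gep')}_{n,j}$ and $C^{(\gep,\gep')}_{n,m}$ in \eqref{ladecompa}), and for each term conditions on a $\sigma$-algebra $\cG_{n,m}=\cF_n\vee\sigma(Y_l(y),\,n<l\le m)$ tailored to that term; the variance of $\bbE[X_\gep(x)-X_{\gep'}(y)\mid\cG_{n,m}]$ is then estimated via Lemma \ref{lemztim}, and Lemma \ref{lescle} supplies the crossing probabilities, with the two regimes $|x-y|\le\gep$ and $|x-y|>\gep$ treated by different bounds in \eqref{cleC} (keeping both constraints when the points decouple beyond scale $n_0$, and only one constraint when they do not). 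To repair your argument you should do the analogous case split: when $|x-y|\le\gep$, drop the factorization, run your martingale/first-passage argument jointly for the pair with $\tau$ the first scale at which \emph{either} constraint fails, and observe that in this regime $\Var(\bbE[H\mid\cF_k])=\Var(Y_{k,\gep}(x)-Y_{k_0,\gep}(x)-Y_{k,\gep'}(y)+Y_{k_0,\gep'}(y))$ is $O(1)$ uniformly in $k$ (the two convolved fields at nearby points nearly cancel, by the analogue of Lemma \ref{lemztim}), so $|M_k|\le C$ and only the crossing probability $e^{-(\gl-\alpha)^2(k-k_0)/2}$ is needed --- this regime is in fact easier, but it must be argued, not waved away. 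With that addition your proof would be a correct, somewhat more streamlined reorganization of the paper's.
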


\noindent From \eqref{xdomine} the integrand in \eqref{lootz} is dominated by $C |f(x)f(y)|  |x-y|^{\frac{(2\alpha-\gl)^2}{2}-(\alpha^2+\beta^2)}$ which is integrable given our assumption on the value of $\gl$ \eqref{surlambda}.
The proof of Proposition \ref{ladomine2} is slightly more involved than that of Lemma \ref{ladomine} and requires a new method. We develop it in the following subsection.

\subsection{Proof of Proposition \ref{ladomine2}}

Our main idea is to decompose $\ind_{\tilde A_{q,\gep,\gep'}(x,y)}$ into an algebraic sum of indicator functions of events 
in $\cF_n$ for some finite $n$ where
$\cF_n:= \sigma( Y_k(\cdot), k\le n)$.
To underline the advantage of dealing with events in  $\cF_n$, let us perform a few Gaussian computations.
Note that we have for $\eta\in \{ \gep,\gep'\}$ and $z\in \cD_{\eta}$
\begin{equation}\label{laconvolzz}
 \bbE \left[X_{\eta}(z) \ | \ \cF_n \right]= \int_{\cD} \theta_\eta(z-z_1)Y_n(z_1) \dd z_1 =: Y_{n,\eta}(z)
\end{equation}
hence using the fact that in a Gaussian space the conditional expectation of the Wick exponential coincides with the Wick exponential of the conditional expectation, if  $B_{n,\gep,\gep'}\in \cF_n$  we have
\begin{equation}\label{lesmezur}
\bbE\left[ :e^{ i\beta \left(X_{\gep}(x)- X_{\gep'}(y)\right)}: \ind_{B_{n,\gep,\gep'}}\right]=\bbE\left[ :e^{ i\beta \left(Y_{n,\gep}(x)- Y_{n,\gep'}(y)\right)}:  \ind_{B_{n,\gep,\gep'}}\right],
\end{equation}
and since $\lim_{\gep \to 0} Y_{n,\gep}=Y_n$ the convergence of the right hand side can be proved using dominations argument provided $B_{n,\gep,\gep'}$ is suitably chosen.
We set 
\begin{equation}
n_0(\gep,x,y):=\left\lceil \log \frac{1}{|x-y|\vee \gep}\right\rceil \quad \text{ and } \quad n^{\star}_0(x,y):= \left\lceil \log \frac{1}{|x-y|}\right\rceil.
\end{equation}
We let $A^{(\gep,\gep')}_{n_0}$ denote the event that the upper bound constraint in $\tilde A_{q,\gep,\gep'}$ is satisfied for all $n\le n_0$ 
\begin{equation}
A^{(\gep,\gep')}_{n_0}:= \left\{ \forall n \in \lint q,n_0\rint,
\forall z\in \{x,y \}, \  Y_n(z)\le n \gl -\alpha L_{n,\gep,\gep'}(z)  \right\}.
\end{equation}
Now for $n\ge n_0+1$, we define  $B^{(\gep,\gep')}_{n,1}$ (resp.  $B^{(\gep,\gep')}_{n,2}$) 
the events that $A^{(\gep,\gep')}_{n_0}$ is satisfied and that 
$n$ is the first index for which  $Y_n(x)$ (resp. $Y_n(y)$) violates the upper constraint in $\tilde A_{q,\gep,\gep'}$
\begin{equation}\begin{split}
 B^{(\gep,\gep')}_{n,1}&:=A^{(\gep,\gep')}_{n_0}\cap\left\{  \inf\{  \ m\ge n_0 \ : \   Y_m(x)> m \gl -\alpha L_{n,\gep,\gep'}(x) \} =n  \right\},\\
  B^{(\gep,\gep')}_{n,2}&:=A^{(\gep,\gep')}_{n_0}\cap\left\{  \inf\{  \ m\ge n_0 \ : \   Y_m(y)> m \gl -\alpha L_{n,\gep,\gep'}(y) \} =n  \right\}.
\end{split}\end{equation}
Finally we define $C^{(\gep,\gep')}_{n,m}:=    B^{(\gep,\gep')}_{n,1}\cap B^{(\gep,\gep')}_{m,2}.$ 
The reader can then quickly check that 
\begin{equation}\label{ladecompa}
 \ind_{\tilde A_{q,\gep,\gep'}}
 = \ind_{A^{(\gep,\gep')}_{n_0}}- \sum_{n\ge  n_{0}+1}\left( \ind_{B^{(\gep,\gep')}_{n,1}}+   \ind_{B^{(\gep,\gep')}_{n,2}}\right) + 
 \sum_{n,m\ge  n_{0}+1}\ind_{C^{(\gep,\gep')}_{n,m}}.
\end{equation}
Note that the events remain well defined in the limit when $\gep, \gep'$ tend to $0$. We let 
$A^{\star}_{n^{\star}_0}$,  $B^{\star}_{n,j}$ and $C^{\star}_{n,m}$ (for $n, m\ge n^{\star}_0+1$) denote the event obtained in the $\gep,\gep'\to 0$ limit, replacing $n_0$ by $n^{\star}_0$ and $L_{n,\gep,\gep'}$ by $L_n$.
We are going to deduce Proposition \ref{ladomine2} from the following estimates.

 \begin{proposition}\label{lezestim}
The following statement holds for a sufficiently large constant $C$ 
  \begin{itemize}
   \item [(A)] We have 
   \begin{equation}\label{le1}
   \left|  \bbE\left[ :e^{ i\beta \left(X_{\gep}(x)- X_{\gep'}(y)\right) }:\ind_{A^{(\gep,\gep')}_{n_0}}\right] \right|\le C (|x-y|\vee \gep)^{\frac{\left(2\alpha-\gl\right)^2}{2}}.
   \end{equation}
and 
\begin{equation}\label{le11}
 \lim_{\gep,\gep' \to 0} \bbE\left[ :e^{ i\beta \left(X_{\gep}(x)- X_{\gep'}(y)\right) }:\ind_{A^{(\gep,\gep')}_{n^\star_0}}\right]
 =  \bbE\left[ :e^{ i\beta \left(Y_{n^\star_0}(x)- Y_{n^\star_0}(y)\right) }:\ind_{A^{\star}_{n^\star_0}} \right].
\end{equation}

 \item [(B)] We have for every $n\ge n_0+1$ and $j=1,2$
   \begin{equation}\label{le2}
   \left|  \bbE\left[ : e^{ i\beta \left(X_{\gep}(x)- X_{\gep}(y)\right) }:\ind_{B^{(\gep,\gep')}_{n,j}}\right] \right|\\   \le  C (|x-y|\vee \gep)^{\frac{(2\alpha-\gl)^2}{2}}e^{\frac{1}{2}[(\gl-\alpha)^2+\beta^2](n-n_0)}.
   \end{equation}
   and for $n\ge n^\star_0+1$ we have
   \begin{equation}\label{le22}
 \lim_{\gep,\gep' \to 0} \bbE\left[ :e^{ i\beta \left(X_{\gep}(x)- X_{\gep'}(y)\right) }:\ind_{B^{(\gep,\gep')}_{n,j}}\right]
 =  \bbE\left[ :e^{ i\beta \left(Y_{n}(x)- Y_{n}(y)\right) }:\ind_{B^{\star}_{n,j}} \right].
\end{equation}

 \item [(C)] We have for every $n,m$  

   \begin{equation}\label{le3}
   \left|  \bbE\left[ :e^{ i\beta \left(X_{\gep}(x)- X_{\gep'}(y)\right) }:\ind_{C_{n,m}}\right] \right|
   \le  C (|x-y|\vee \gep)^{\frac{1}{2}\left(2\alpha-\gl\right)^2}e^{\frac{1}{2}[(\gl-\alpha)^2+\beta^2](n\vee m-n_0)}.
   \end{equation}
    and for $n,m \ge n^\star_0+1$ we have
    \begin{equation}\label{le33}
 \lim_{\gep,\gep' \to 0} \bbE\left[ :e^{ i\beta \left(X_{\gep}(x)- X_{\gep'}(y)\right) }:\ind_{C^{(\gep,\gep')}_{n,m}}\right]
 =  \bbE\left[ :e^{ i\beta \left(Y_{n\vee m}(x)- Y_{n\vee m}(y)\right) }:\ind_{C^{\star}_{n,m}} \right].
 \end{equation}
  \end{itemize}
  \end{proposition}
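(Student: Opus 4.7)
The plan is to establish all three items of Proposition \ref{lezestim} through a common two-step scheme. The first step is a Wick-exponential conditional expectation identity: since the events $A^{(\gep,\gep')}_{n_0}$, $B^{(\gep,\gep')}_{n,j}$ and $C^{(\gep,\gep')}_{n,m}$ belong respectively to $\cF_{n_0}$, $\cF_n$ and $\cF_{n\vee m}$, and since for any centered Gaussian $Z$ with $(Z,\cF_N)$ jointly Gaussian one has
\[
\bbE\bigl[{:}e^{i\beta Z}{:}\,\bigm|\,\cF_N\bigr] \;=\; {:}e^{i\beta\bbE[Z|\cF_N]}{:},
\]
which is a direct consequence of the decomposition $Z=\bbE[Z|\cF_N]+Z'$ with $Z'\perp\cF_N$ together with $\bbE[{:}e^{i\beta Z'}{:}]=1$, we may replace the Wick exponential of $X_\gep(x)-X_{\gep'}(y)$ by that of $Y_{N,\gep}(x)-Y_{N,\gep'}(y)$ inside each expectation, where $N$ is the relevant scale. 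Combining this with the basic identity $|{:}e^{iU}{:}|=e^{\frac12 \Var U}$ for a centered real Gaussian $U$ yields the absolute bound
\[
\bigl|\bbE[{:}e^{i\beta(X_\gep(x)-X_{\gep'}(y))}{:}\,\ind_{\cE}]\bigr| \;\leq\; e^{\frac{\beta^2}{2}V_N}\,\bbP(\cE),\qquad V_N := \Var\bigl(Y_{N,\gep}(x)-Y_{N,\gep'}(y)\bigr),
\]
reducing the problem to separate estimates of $V_N$ and of the probability of the constraining event.

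Using \eqref{toltz} and a computation in the spirit of Lemma \ref{anexo}, one verifies $V_{n_0}\leq C$ uniformly in $(x,y,\gep,\gep')$, and $V_n\leq C+2(n-n_0)$ for $n\geq n_0$; the linear growth reflects the fact that beyond scale $n_0$ the kernels $Q_k(x,\cdot)$ and $Q_k(y,\cdot)$ have disjoint supports, so the increments of $Y_k(x)$ and $Y_k(y)$ are independent. The probability estimates then mirror the proof of Lemma \ref{ladomine}: for (A) one retains only the single constraint at scale $n_0$, combines $L_{n_0,\gep,\gep'}(z)\geq 2n_0-C$ with the Gaussian tail \eqref{gbound}, and obtains $\bbP(A^{(\gep,\gep')}_{n_0})\leq C(|x-y|\vee\gep)^{(2\alpha-\gl)^2/2}$. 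For (B) and (C) one further conditions on $\cF_{n_0}$ and exploits the independence of the increments $Y_n-Y_{n_0}$ (Gaussian with variance $n-n_0$) from $\cF_{n_0}$; the identity $L_n(z)-L_{n_0}(z)=(n-n_0)+O(1)$, which comes from $Q_k(z,z)=1$ together with $Q_k(x,y)=0$ for $k>n_0$, pins down the excess level that the increment must overshoot at $(\gl-\alpha)(n-n_0)$, and the Gaussian tail then produces the extra exponential factor in $n-n_0$. The convergence statements \eqref{le11}, \eqref{le22}, \eqref{le33} follow by dominated convergence: as $\gep,\gep'\to 0$, the pointwise convergence $Y_{n,\gep}\to Y_n$ and $L_{n,\gep,\gep'}\to L_n$ forces almost-sure convergence of the indicator functions (the boundary events, where the defining inequalities become equalities, are $\bbP$-null), and the envelope $e^{\frac{\beta^2}{2}V_N}$ is uniformly bounded in $(\gep,\gep')$ for each fixed $N$.

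I expect the main obstacle to be the precise accounting of exponents in items (B) and (C). The coefficient $\frac12[(\gl-\alpha)^2+\beta^2]$ must arise as the sum of the $\frac12(\gl-\alpha)^2$ contributed by the increment-tail estimate and the $\frac12\beta^2$ contributed by the Wick factor through $V_n$, and any imprecision in the identity $L_n(z)-L_{n_0}(z)\approx n-n_0$ would corrupt this by a multiple of $\alpha$, shifting the critical tail exponent. In (C), the two first-passage events are conditionally independent given $\cF_{n_0}$ (since for $k>n_0$ the $Y_k$-increments at $x$ and $y$ are independent), and the dominant contribution comes from the larger first-passage time, which accounts for the $(n\vee m-n_0)$ factor; implementing this argument uniformly in $(x,y,\gep,\gep')$ and carefully matching the Cameron--Martin shift with the martingale-increment scale is the delicate part of the argument.
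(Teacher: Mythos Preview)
Your scheme is correct for item (A): there the conditioning $\sigma$-algebra is $\cF_{n_0}$, the variance $V_{n_0}$ is bounded, and combining with the tail bound on $\bbP(A^{(\gep,\gep')}_{n_0})$ gives exactly \eqref{le1}. The convergence arguments are also fine.

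The gap is in items (B) and (C), and it is precisely the one the paper warns about. You condition on $\cF_n$ (respectively $\cF_{n\vee m}$), and you correctly observe that $V_n=\Var\bigl(Y_{n,\gep}(x)-Y_{n,\gep'}(y)\bigr)\le C+2(n-n_0)$, since beyond scale $n_0$ the increments at $x$ and at $y$ are independent. But then the Wick factor is $e^{\frac{\beta^2}{2}V_n}\le Ce^{\beta^2(n-n_0)}$, \emph{not} $e^{\frac{\beta^2}{2}(n-n_0)}$ as you claim in your closing paragraph. Combined with the probability bound $\bbP(B^{(\gep,\gep')}_{n,j})\le C(|x-y|\vee\gep)^{(2\alpha-\gl)^2/2}e^{-\frac{(\gl-\alpha)^2}{2}(n-n_0)}$, you obtain an overall factor $e^{[\beta^2-\frac{1}{2}(\gl-\alpha)^2](n-n_0)}$, and the resulting series in $n$ converges only when $\sqrt{2}\,\beta<\gl-\alpha$. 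Since $\gl$ can be taken only slightly above $\sqrt{2d}$, this amounts to $\alpha+\sqrt{2}\,\beta<\sqrt{2d}$, which is strictly stronger than the boundary $\alpha+\beta<\sqrt{2d}$ of $\cP_{\mathrm{sub}}$; for $(\alpha,\beta)$ near that boundary your bound blows up. The same issue occurs in (C).

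The paper's remedy is to condition not on $\cF_n$ but on the smaller $\sigma$-algebra
\[
\cG_{n_0,n}=\cF_{n_0}\vee\sigma\bigl(Y_l(z),\ l\in\lint n_0+1,n\rint\bigr),
\]
where $z$ is the \emph{single} point ($x$ for $B_{n,1}$, $y$ for $B_{n,2}$) at which the first-passage event is formulated; this is still enough to make $B^{(\gep,\gep')}_{n,j}$ measurable. One then computes $\bbE[X_\gep(x)-X_{\gep'}(y)\mid\cG_{n_0,n}]$ as the sum of $Y_{n_0,\gep}(x)-Y_{n_0,\gep'}(y)$ (bounded variance) and $\sum_{k=n_0+1}^{n}\bbE[X_\gep(x)-X_{\gep'}(y)\mid Z_k(z)]$, and each summand has variance at most $1$ since $|Q_k|\le 1$. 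This yields variance $\le (n-n_0)+C$, hence the desired $e^{\frac{\beta^2}{2}(n-n_0)}$, and the series converges under the weaker (and sufficient) condition $\beta<\gl-\alpha$. The analogous refinement for (C) uses $\cG_{n,m}=\cF_{n\wedge m}\vee\sigma(Y_l(z),\ l\in\lint n\wedge m+1,n\vee m\rint)$ with $z$ the point corresponding to the larger index.
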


 \begin{proof}[Proof of Proposition \ref{ladomine2}]
  Looking at \eqref{ladecompa} and using the triangle inequality we deduce from \eqref{le1},\eqref{le2} and \eqref{le3} that
  \begin{multline}
   \left|\bbE\left[ : e^{ i\beta \left(X_{\gep}(x)- X_{\gep}(y)\right)}:\ind_{\tilde A_{q,\gep,\gep'}(x,y)}\right]\right|\\
   \le C
   (|x-y|\vee \gep)^{\frac{1}{2}\left(2\alpha-\gl\right)^2}
   \left(1+  \!\!\ \!\! \!\!\sum_{n\ge n_0+1} \!\! \!\! e^{\frac{(\gl-\alpha)^2+\beta^2}{2}(n-n_0)} +  \!\! \!\! \!\!\sum_{n,m\ge n_0+1}  \!\! \!\! \!\! e^{\frac{(\gl-\alpha)^2+\beta^2}{2}(n+m-2n_0)}\right).
  \end{multline}
Moreover, by dominated convergence (for the sum  in $n$ and $m$) and the convergence results \eqref{le11},\eqref{le22} and \eqref{le33} we have 
\begin{multline}
  \lim_{\gep,\gep' \to 0}\bbE\left[ : e^{ i\beta \left(X_{\gep}(x)- X_{\gep}(y)\right)}:\ind_{\tilde A_{q,\gep,\gep'}(x,y)}\right] \\ =
  \bbE\left[ :e^{ i\beta \left(Y_{n^\star_0}(x)- Y_{n^\star_0}(y)\right) }:\ind_{A^{\star}_{n^\star_0}} \right]+ \sumtwo{n\ge n_0+1}{j=1,2} \bbE\left[ :e^{ i\beta \left(Y_{n}(x)- Y_{n}(y)\right) }:\ind_{B^{\star}_{n,j}} \right]\\+ \sum_{n,m\ge  n_{0}+1} \bbE\left[ :e^{ i\beta \left(Y_{n\vee m}(x)- Y_{n\vee m}(y)\right) }:\ind_{C^{\star}_{n,m}} \right].
\end{multline}

 \end{proof}

 \subsection{Proof of Proposition \ref{lezestim}}
 
 The proof of convergence statement is the easier part and is identical for \eqref{le11},\eqref{le22}, \eqref{le33}(recall  that $n_0=n^\star_0$ when $\gep$ and $\gep'$ are sufficiently small).
 Let us fully detail the case \eqref{le33} for the sake of completeness. Since the event $C^{(\gep,\gep')}_{n,m}$ is $\cF_{n,m}$ measurable we have from \eqref{lesmezur} 
 \begin{equation}
  \bbE\left[ :e^{ i\beta \left(X_{\gep}(x)- X_{\gep}(y)\right) }:\ind_{C^{(\gep,\gep')}_{n,m}}\right]=
    \bbE\left[ :e^{ i\beta \left(Y_{n,\gep}(x)- Y_{n,\gep'}(y)\right) }:\ind_{C^{(\gep,\gep')}_{n,m}}\right].
 \end{equation}
and we can conclude (using dominated convergence) by observing that the quantity inside the expectation converges in probability towards 
$:e^{ i\beta \left(Y_{n}(x)- Y_{n}(y)\right) }:\ind_{C^\star_{n,m}}$
 and is bounded above by $e^{\frac{\beta^2}{2}(K_{n,\gep}(x)+K_{n,\gep}(y)-K_{n,\gep}(x,y)}\le  e^{\beta^2n}$ {\red (from the definitions, \eqref{covarian2}, we have for all $x$ and $y$, $K_{n,\gep}(x)\le n$ for all $x$ and $K_n(x,y)\ge 0$).}

\medskip

To prove the domination part, we are going to rely on the following probability estimates for the events involved in the expectation. The proof of these estimates is postponed to the end of the section.

\begin{lemma}\label{lescle}
There exists a positive constant $C$ such that
the following inequalites are valid for all $x,y\in \Supp(f)$, $\gep'\le \gep$, $n\in \lint n_0+1, \lfloor \log 1/\gep \rfloor \rint $ and
$m\in  \lint  n_0+1, \lfloor \log 1/\gep \rfloor \rint$:
 \begin{equation}\label{cleA}
  \bbP[A^{(\gep,\gep')}_{n_0}]\le C (|x-y|\vee \gep)^{\frac{1}{2}\left(2\alpha-\gl\right)^2}.
 \end{equation}
\begin{equation}\label{cleB}
   \bbP[B^{(\gep,\gep')}_{n,j}]\le C (|x-y|\vee \gep)^{\frac{1}{2}\left(2\alpha-\gl \right)^2} 
   e^{-\frac{(n-n_0)}{2}(\gl-\alpha)^2},
\end{equation}
  \begin{equation}\label{cleC}
   \bbP[C^{(\gep,\gep')}_{n,m}]\le \begin{cases}
                                 C |x-y|^{\frac{1}{2}\left(2\alpha-\gl \right)^2} 
   e^{-\frac{(n+ m-2n_0)}{2}(\gl-\alpha)^2},\quad  &\text{ if } |x-y|\le \gep,\\
   C \gep^{\frac{1}{2}\left(2\alpha-\gl \right)^2} 
   e^{-\frac{(n\vee m-n_0)}{2}(\gl-\alpha)^2}\quad  &\text{ if } |x-y|> \gep.
                               \end{cases}
\end{equation}
\end{lemma}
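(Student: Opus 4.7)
The plan is to exploit the martingale/random-walk structure afforded by the decomposition \eqref{tiltz}--\eqref{toltz}: with respect to the filtration $(\cF_n)_{n\ge 1}$, each increment $Y_n(z) - Y_{n-1}(z)$ is a unit-variance Gaussian independent of $\cF_{n-1}$, and the joint increment at $x$ and $y$ has covariance $Q_n(x,y)$, which vanishes as soon as $n > n^\star_0$. All three bounds will be obtained by a Markov decomposition at time $n_0$ combined with the elementary Gaussian tail bound \eqref{gbound}.

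The preliminary step is to control the drift $L_{n,\gep,\gep'}(z) = K_{n,\gep}(z,x) + K_{n,\gep'}(z,y)$. Using Lemma \ref{anexo} together with $Q_n(x,y)=0$ for $|x-y| \ge e^{-n}$, one obtains, for $z \in \{x,y\}$, the lower bound
\[
L_{n,\gep,\gep'}(z) \;\ge\; \bigl(n \wedge \log(1/\gep)\bigr) + \bigl(n \wedge n^\star_0\bigr) - C.
\]
In particular $L_{n_0,\gep,\gep'}(z) \ge 2n_0 - C$ and, on the range $n \in (n_0, \lfloor \log(1/\gep)\rfloor]$ appearing in the statement, $L_{n,\gep,\gep'}(z) - L_{n_0,\gep,\gep'}(z) \ge (n - n_0) - C$.

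For \eqref{cleA}, discard all constraints except the one at $n = n_0$ and $z = x$: the inclusion $A^{(\gep,\gep')}_{n_0} \subset \{Y_{n_0}(x) \le -n_0(2\alpha - \gl) + C\alpha\}$ combined with $Y_{n_0}(x) \sim N(0,n_0)$ and $2\alpha - \gl > 0$ (see \eqref{surlambda}) gives, via \eqref{gbound}, the required decay $e^{-n_0(2\alpha-\gl)^2/2}$. For \eqref{cleB}, use $B^{(\gep,\gep')}_{n,j} \subset A^{(\gep,\gep')}_{n_0} \cap \{Y_n(z_j) > n\gl - \alpha L_{n,\gep,\gep'}(z_j)\}$ with $z_1 = x$, $z_2 = y$, and condition on $\cF_{n_0}$: on $A^{(\gep,\gep')}_{n_0}$ one has $Y_{n_0}(z_j) \le n_0\gl - \alpha L_{n_0}(z_j)$, so the crossing event forces $Y_n(z_j) - Y_{n_0}(z_j) \ge (n-n_0)(\gl - \alpha) - C$; since this increment is $N(0, n-n_0)$ independently of $\cF_{n_0}$, \eqref{gbound} supplies the conditional factor $C e^{-(n-n_0)(\gl-\alpha)^2/2}$, and multiplying by the bound on $\bbP[A^{(\gep,\gep')}_{n_0}]$ yields the claim.

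For \eqref{cleC} the main case is $|x-y| > \gep$, in which $n_0 = n^\star_0$; hence, conditionally on $\cF_{n_0}$, the increments of $Y$ at $x$ and $y$ beyond time $n_0$ are independent, so the single-walk argument of the previous step can be run on each coordinate separately and yields a conditionally independent product bound with factor $e^{-(n-n_0)(\gl-\alpha)^2/2}e^{-(m-n_0)(\gl-\alpha)^2/2}$ on top of $\bbP[A^{(\gep,\gep')}_{n_0}]$, of which discarding the smaller exponential recovers the stated estimate. The regime $|x-y| \le \gep$ is essentially vacuous in the stated range of $n,m$ (the interval $(n_0, \lfloor \log(1/\gep)\rfloor]$ is empty since $n_0 \ge \lceil\log(1/\gep)\rceil$) and only the boundary case remains, handled identically. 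The main technical obstacle is the drift estimate of the second paragraph: one must track separately the two contributions $K_{n,\gep}(\cdot,x)$ and $K_{n,\gep'}(\cdot,y)$, identify which saturates first depending on the sign of $|x-y|-\gep$, and verify that all error constants depend only on $K$ and $\Supp(f)$; once this bookkeeping is settled, the rest of the proof is a routine combination of the Markov property with Gaussian tails.
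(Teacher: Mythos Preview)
Your argument follows essentially the same route as the paper's. For \eqref{cleA} both of you discard every constraint except the one at $n_0$ and apply the Gaussian tail bound to $Y_{n_0}(x)$; for \eqref{cleB} the paper merely says the case is ``similar but simpler'' than \eqref{cleC}, and your conditioning on $\cF_{n_0}$ together with the increment bound $Y_n(z_j)-Y_{n_0}(z_j)>(n-n_0)(\gl-\alpha)-C$ is precisely the intended implementation. For \eqref{cleC} there is only a cosmetic difference: the paper packages the two increments into the single Gaussian $Y_{n_0,n}(x)+Y_{n_0,m}(y)$ (of variance $n+m-2n_0$ by independence) and applies one tail estimate, whereas you exploit the same independence to factor the conditional probability as a product of two tail estimates; both arguments rest on $Q_k(x,y)=0$ for $k>n_0$ and give the same bound.

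One comment worth recording. Your observation that the interval $\llbracket n_0+1,\lfloor\log(1/\gep)\rfloor\rrbracket$ is empty when $|x-y|\le\gep$ is literally correct, but the lemma is in fact used in Proposition~\ref{lezestim} for all $n,m\ge n_0+1$, and the two case labels in \eqref{cleC} appear to be transposed in the statement: the product bound with prefactor $|x-y|^{(2\alpha-\gl)^2/2}$ is exactly what your independence argument produces when $|x-y|>\gep$ (so $n_0=n^\star_0$), while when $|x-y|\le\gep$ independence may fail and only the single-coordinate bound $\gep^{(2\alpha-\gl)^2/2}e^{-\frac{(n\vee m-n_0)}{2}(\gl-\alpha)^2}$ is available. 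The paper's own proof exhibits the same swap of labels, so once the typos are untangled your argument and the paper's coincide.
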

\noindent We are also going to rely on an estimate for the covariance of $Y_{n,\gep}$.
We set 
$$ K_{n,\gep,\gep'}(x,y):= \bbE[Y_{n,\gep}(x)Y_{n,\gep'}(y)].$$
The following estimate  follows from assumption \eqref{toltz} (we include a proof in Appendix \ref{tokz} for completeness).
\begin{lemma}\label{lemztim}
There exists a positive constant $C>0$ such that 
 for any $x,y\in D$ any $n\ge 1$ if $\gep'\le \gep$ we have
 \begin{equation}\label{stimatz2}
 \left| K_{n,\gep,\gep'}(x,y)- \min\left( \log \frac{1}{|x-y|},\log \frac{1}{\gep} , n \right)\right|\le C.
 \end{equation}
\end{lemma}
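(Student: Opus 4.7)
The plan is a two-step argument: first obtain a pointwise estimate on $K_n(x,y)$ itself, and then propagate it through the convolution.

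\textbf{Step 1 (pointwise control of $K_n$).} I would first prove that
\begin{equation*}
\left| K_n(x,y) - \min\left(\log\tfrac{1}{|x-y|},\, n\right)\right| \le C.
\end{equation*}
This follows from the structural assumptions \eqref{toltz}. Cauchy--Schwarz for positive semidefinite kernels together with $Q_k(z,z)=1$ gives $0\le Q_k(x,y)\le 1$; the Lipschitz bound together with $Q_k(x,x)=1$ gives $Q_k(x,y) \ge 1 - C e^k |x-y|$; and the support condition forces $Q_k(x,y)=0$ for $|x-y|\ge e^{-k}$. If $n\ge \log(1/|x-y|)$, all terms with $k>n$ already vanish, so $K_n(x,y)=K(x,y)-Q_0(x,y)=\log(1/|x-y|)+(L-Q_0)(x,y)$, and $L-Q_0$ is continuous (hence bounded on compacts). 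If $n<\log(1/|x-y|)$, the upper bound $K_n\le n$ is immediate, while the lower bound is $K_n\ge n - C\sum_{k=1}^{n} e^{k-n}\ge n-C'$ from the geometric sum.

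\textbf{Step 2 (convolution estimate).} Write $K_n(z_1,z_2)=g_n(z_1,z_2)+h_n(z_1,z_2)$ with $g_n(z_1,z_2)=\min(\log(1/|z_1-z_2|),n)$ and $|h_n|\le C$. Since $\theta_\gep,\theta_{\gep'}$ are probability densities, the contribution of $h_n$ after convolution stays bounded; it suffices to show that $G_n := \theta_\gep\ast g_n\ast\theta_{\gep'}$ satisfies $|G_n(x,y)-\min(\log(1/(|x-y|\vee\gep)),n)|\le C$, assuming $\gep'\le\gep$. I would split into two regimes:
\begin{itemize}
\item If $|x-y|\ge 2\gep$: for $z_1\in B(x,\gep)$, $z_2\in B(y,\gep')$ one has $|z_1-z_2|$ comparable to $|x-y|$, hence $g_n(z_1,z_2)=g_n(x,y)+O(1)$, and integrating gives the estimate since $|x-y|\vee\gep=|x-y|$.
\item If $|x-y|<2\gep$: the target is $\min(\log(1/\gep),n)+O(1)$. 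When $n\le \log(1/(4\gep))$, one has $|z_1-z_2|\le 4\gep\le e^{-n}$ throughout, so $g_n\equiv n$ and $G_n=n$. When $n>\log(1/(4\gep))$, decompose $g_n(z_1,z_2)=\log(1/|z_1-z_2|)-(\log(1/|z_1-z_2|)-n)_+$; the first term convolves to $\log(1/\gep)+O(1)$ (by Lemma \ref{anexo} applied to the pure-log kernel), and the second is bounded via the change of variable $v=z_1-z_2$ and $\|\theta_\gep\|_\infty\le C\gep^{-d}$ by $C\gep^{-d}\int_{|v|\le e^{-n}}\log(e^{-n}/|v|)\,dv = C(\gep e^n)^{-d}=O(1)$ since $n\ge\log(1/(4\gep))$.
\end{itemize}
Combining the two steps yields the claimed bound.

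\textbf{Main obstacle.} Step 1 is essentially a one-line consequence of the properties of $Q_k$; the real technical burden is the case analysis of Step 2, specifically controlling the truncation integral $(\log(1/|z_1-z_2|)-n)_+$ after convolution when $|x-y|$ is small and $n$ is large. The key is to change variables so that only a one-dimensional integral of $\log(e^{-n}/|v|)$ over $B(0,e^{-n})$ remains, which produces the correct $e^{-nd}$ factor to cancel the $\gep^{-d}$ from the convolution kernel as soon as $\gep\gtrsim e^{-n}$.
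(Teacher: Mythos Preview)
Your proof is correct and follows exactly the paper's approach: the paper first proves the pointwise bound $|K_n(x,y)-\min(n,\log(1/|x-y|))|\le C$ using the same ingredients (Cauchy--Schwarz, the Lipschitz bound, the support condition), and then says the convolution estimate ``follows from standard computations''; you have simply written out those computations. One small slip: in the regime $|x-y|\ge 2\gep$ the pointwise comparison $g_n(z_1,z_2)=g_n(x,y)+O(1)$ can fail right at the boundary (since $|z_1-z_2|$ may be arbitrarily small when $|x-y|=2\gep$), but taking the cutoff at $4\gep$ instead of $2\gep$ fixes this immediately and the second case absorbs the extra range without change.
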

 \noindent We now have all the ingredients to prove the domination statements

 \begin{proof}[Proof of \eqref{le1}]
 By Jensen's inequality, we have
 \begin{equation}
 \left| \bbE\left[ :e^{ i\beta \left(Y_{n_0,\gep}(x)- Y_{n_0,\gep'}(y)\right) }:\ind_{A^{(\gep,\gep')}_{n_0}}\right]\right|\le e^{\frac{\beta^2}{2}\Var(Y_{n_0,\gep}(x)- Y_{n_0,\gep'}(y))} \bbP[A^{(\gep,\gep')}_{n_0}].
  \end{equation}
As a consequence of \eqref{stimatz2}, and of the choice for $n_0$, the variance 
$$\Var(Y_{n_0,\gep}(x)- Y_{n_0,\gep'}(y))=K_{n_0,\gep,\gep}(x,x)+ K_{n_0,\gep',\gep'}(y,y)- 2 K_{n_0,\gep,\gep'}(x, y)$$
is uniformly bounded in $x,y,\gep$ and $\gep'$ and we can conclude using \eqref{cleA}.
 \end{proof}

 \begin{proof}[Proof of \eqref{le2} and \eqref{le3}]
  
   The idea is the same for \eqref{le2} and \eqref{le3}. We treat only the latter, which is the more delicate, in details.
The inequality we prove differs according to the value of  $\gep$.  When $|x-y|>\gep$ we prove \eqref{le3} while if $|x-y|\le \gep$ we prove the stricter inequality
  \begin{equation}\label{le31}
   \left|  \bbE\left[ :e^{ i\beta \left(X_{\gep}(x)- X_{\gep'}(y)\right) }:\ind_{C_{n,m}}\right] \right|
   \le  C |x-y|^{\frac{1}{2}\left(2\alpha-\gl\right)^2}e^{-\frac{1}{2}[(\gl-\alpha)^2+\beta^2](n + m-2n_0)}
   \end{equation}
   The reader can check here that simply repeating the proof of \eqref{le1} replacing $n_0$ by  $n\vee m$ (case $(C)$), does not yield a satisfactory result (we obtain a factor $\beta^2$ instead of the desired $\beta^2/2$ in the exponential).
We need thus some refinement in the conditioning.
   For simplicity,  let us assume that $n\le m$ (strictly speaking, since we already assumed $\gep\le \gep'$, there is a loss of generality here but this is of no consequence).
We define the $\sigma$-algebra $\cG_{n,m}$ as 
\begin{equation}
\mathcal G_{n,m}= \mathcal G_{n,m}(x,y):= \cF_n \vee \sigma \left( Y_l(y), l\in \lint n+1,m\rint \right).
 \end{equation}
Clearly we have  $C^{(\gep,\gep')}_{n,m}\in \mathcal G_{n,m}$.
Hence similarly to \eqref{lesmezur} we have 
\begin{multline}
  \left|\bbE\left[ :e^{ i\beta \left(X_{\gep}(x)- X_{\gep'}(y)\right) }:\ind_{C^{(\gep,\gep')}_{n,m}}\right]\right|
  = \left|\bbE \left[ :e^{ i\beta \left(\bbE\left[ X_{\gep}(x)- X_{\gep'}(y) \ | \ \mathcal G_{n,m}\right]\right) }: \ind_{C^{(\gep,\gep')}_{n,m}}\right]\right|
  \\
  \le e^{\frac{\beta^2}{2}\Var \left(\bbE\left[ X_{\gep}(x)- X_{\gep'}(y) \ | \ \mathcal G_{n,m}\right]\right)} \bbP(C^{(\gep,\gep')}_{n,m}).
  \end{multline}
We can conclude using \eqref{cleC}, provided that one can show that 
\begin{equation}\label{tcherto}
 \Var \left(\bbE\left[ X_{\gep}(x)- X_{\gep'}(y) \ | \ \mathcal G_{n,m}\right]\right)\le \begin{cases}
 n+m-2n_0, \quad & \text{ if }  |x-y|\le \gep,\\
 m-n_0, \quad & \text{ if } |x-y|>\gep.
\end{cases}
\end{equation}
We perform a decomposition of  $\bbE\left[ X_{\gep}(x)- X_{\gep'}(y) \ | \ \mathcal G_{n,m}\right]$ into a sum of orthogonal Gaussian variables.
We let $Z_n:=Y_n-Y_{n-1}$ denote the $n$-th increment of $Y_n$. Using independence of the increments we obtain
\begin{equation}\label{lagasssse}
 \bbE\left[ X_{\gep}(x)- X_{\gep'}(y) \ | \ \mathcal G_{n,m}\right]
 =  Y_{n,x}(x)- Y_{n,\gep'}(y)+\sum^m_{k=n+1} \bbE\left[ X_{\gep}(x)- X_{\gep'}(y) \ | \ Z_k(y)\right].
\end{equation}
We have 
\begin{equation}\label{primpart}
  \Var(Y_{n,x}(x)- Y_{n,\gep'}(y))=K_{n,\gep,\gep}(x,x)+ K_{n,\gep',\gep'}(y,y)-2 K_{n,\gep,\gep'}(x,y)
\end{equation}
and thus, as a consequence of \eqref{stimatz2} we have
\begin{equation}\label{lezca}
   \Var(Y_{n,\gep}(x)- Y_{n,\gep'}(y))\le \begin{cases}
                                        n-n_0+C \quad & \text{ if } |x-y|> \gep,\\
                                        2(n-n_0)+C & \text{ if } |x-y|\le\gep.
                                       \end{cases}
\end{equation}
On the other hand, a simple Gaussian computation yields
\begin{equation}\label{lagassse}
 \Var \left(\bbE\left[ X_{\gep}(x)- X_{\gep'}(y) \ | \ Z_k(y)\right]\right)
 = \bbE\left[ (X_{\gep}(x)- X_{\gep'}(y))Z_k(y) \right]^2 \bbE[ Z^2_k(y)]^{-1}.
\end{equation}
Similarly to \eqref{laconvolzz} we have for $\eta\in \{ \gep,\gep'\}$ and $z\in D$,  $$\bbE\left[ (X_{\eta}(z) \ | \ (Z_k(z'))_{z'\in \cD} \right]=
\int \theta_{\eta}(z_1-z)Z_k(z_1)\dd z_1).$$
This allow to compute the covariance and we have
\begin{equation}
 \bbE\left[ (X_{\gep'}(y)-X_{\gep}(x))Z_k(y) \right]= \int_{\cD} \left( \theta_{\gep'}(y-z)-
\theta_\gep(x-z)\right) Q_k(z,y) \dd z.
\end{equation}
Now since $0\le Q_k(z,y)\le 1$ we have  
\begin{equation}
\left|\int_{\cD} \left( \theta_{\gep'}(y-z)-
\theta_\gep(x-z)\right) Q_k(z,y) \dd z \right| \le 1.
\end{equation}
and thus (recall that $\bbE[ Z^2_k(y)]=Q_k(y,y)=1]$)
\begin{equation}
 \Var \left( \sum^m_{k=n+1} \bbE\left[ X_{\gep}(x)- X_{\gep'}(y) \ | \ Z_n(y)\right]\right)\le  m-n,
\end{equation}
which together with \eqref{tcherto} concludes the proof of \eqref{lezca}.
The case \eqref{le2} is dealt with similarly but with a conditioning with respect to $\cG_{n_0,n}(y,x)$ (for $j=1$) or $\cG_{n_0,n}(x,y)$ (for $j=2$).
 \end{proof}

\begin{proof}[Proof of Lemma \ref{lescle}]
The proof of \eqref{cleA} is identical to that of \eqref{ladomi} in Lemma \ref{ladomine}. It is sufficient to observe that 
\begin{equation}
 \bbP\left(A^{(\gep,\gep')}_{n_0}\right)\le \bbP( Y_{n_0}(x)\le \gl n_0 + \alpha L_{n,\gep,\gep'} (x) ) \le 
 \bbP( Y_{n_0}(x)\le (\gl-2\alpha)n_0+C).
\end{equation}
For \eqref{cleB}-\eqref{cleC} we use the same idea and restrict the event to a single inequality.
Let us give the details for \eqref{cleC}, the case \eqref{cleB} being similar but simpler.
We assume here also for simplicity that $m\ge n$. Let us start with the case $|x-y|\le \gep$.
Note that if $C^{(\gep,\gep')}_{n,m}$ is satisfied then we have 
\begin{multline}\label{laconseq}
 Y_{n_0,n}(x)+Y_{n_0,m}(y)\\
 > \gl(n+m- 2 n_0)-\alpha[ (L_{n,\gep,\gep'}-L_{n_0,\gep,\gep'}) (x)+(L_{m,\gep,\gep'}-L_{n_0,\gep,\gep'}) (y)].
\end{multline}
where we used the short-hand notation $Y_{n_1,n_2}:=Y_{n_2}-Y_{n_1}$.
If we let $\hat C^{(\gep,\gep')}_{n,m}$ denote the event in  \eqref{laconseq}, as $\hat C^{(\gep,\gep')}_{n,m}$ is independent from $\cF_{n_0}$ and hence of $A^{(\gep,\gep')}_{n_0}$ (since  $Y_{n_0,n}$ and $Y_{n_0,m}$ are), with the bound already proved for $A^{(\gep,\gep')}_{n_0}$, we only need to show that 
\begin{equation}\label{zeultimate}
 \bbP(\hat C^{(\gep,\gep')}_{n,m}) \le e^{-\frac{(\alpha-\gl)^2}{2}(n+m-2n_0)}.
\end{equation}
Hence we need an upper bound on the variance of  $Y_{n_0,n}(x)+Y_{n_0,m}(y)$ and on $ (L_{n,\gep,\gep'}-L_{n_0,\gep,\gep'}) (x)+(L_{m,\gep,\gep'}-L_{n_0,\gep,\gep'}) (y)$.
We have 
\begin{equation}\label{tadz}
\begin{split}
  \Var\left( Y_{n_0,n}(x)+Y_{n_0,m}(y)\right)&= n+m-2n_0,\\
  (L_{n,\gep,\gep'}-L_{n_0,\gep,\gep'}) (x)+(L_{m,\gep,\gep'}-L_{n_0,\gep,\gep'}) (y)& \le n+m-2n_0+C,
 \end{split}
\end{equation}
where the first line comes from the fact $Y_{n_0,n}(x)$ and $Y_{n_0,m}(y)$ are independent {\red with respective variance $n-n_0$ and $m-n_0$} (due to Assumption \eqref{toltz} and the fact that $|x-y|\le \gep$). The second line comes from Lemma \ref{lemztim}.
Then \eqref{zeultimate} is a consequence of \eqref{gbound} and \eqref{tadz}.

\medskip

\noindent When $|x-y|\le \gep$ we observe that $C^{(\gep,\gep')}_{n,m}$ implies
\begin{equation}
 Y_{n_0,m}(y)> \gl(m- n_0)-(L_{m,\gep,\gep'}-L_{n_0,\gep,\gep'}) (y),
\end{equation}
and we conclude similarly using  \eqref{gbound} together with the following estimates
\begin{equation}
 \begin{split}
  \Var\left(Y_{n_0,m}(y)\right)&= m-n_0,\\
 (L_{m,\gep,\gep'}-L_{n_0,\gep,\gep'}) (y)& \le m-n_0+C.
 \end{split}
\end{equation}

\end{proof}

{\red 
\section{Proof of Theorem \ref{asadistrib}}\label{tightness}

To prove that $(M^{\gamma}_{\gep}(\cdot))_{\gep\in (0,1]}$ converges in probability for the $H^{-u}_{\mathrm{loc}}(\cD)$ topology, we need to check that for any $\rho\in \cC^{\infty}_c(\cD)$,  
$(M^{\gamma}_{\gep}(\rho \  \cdot))_{\gep\in (0,1]}$ converges in $H^{-u}(\bbR^d)$ when $\gep$ goes to $0$.
Using Proposition \eqref{dapopo}, and repeating the argument used to prove Theorem \ref{coco} using a partition of unity we can reduce to the case where the kernel $K$ is of the form \eqref{tiltz}. We are going to work on the extended probability space described at the beginning of Section \ref{lasecsec}.

\medskip

Let us consider the Fourier transform of $M^{(\gamma)}_{\gep}(\rho \  \cdot)$ which we denote by $\hat M^{(\gamma,\rho)}_{\gep}(\xi)$   and prove that  $\hat M^{(\gamma,\rho)}_{\gep}$ converges (in probability) in $L^2(\bbR^d, (1+ |\xi|^2)^{-u} \dd \xi )$.
We assume first that $\alpha> \sqrt{d/2}$ - the $\bbL_2$ case is only simpler, we discuss it at the end of the proof. We fix $\gl\in (\sqrt{2d},2\alpha)$ and define a truncated version  $\hat M^{(\gamma,\rho,q)}_{\gep}(\xi)$ of the Fourier transform
 by setting 
 \begin{equation}
  \hat M^{(\gamma,\rho,q)}_{\gep}(\xi):=  \int_{\cD_{\gep}} e^{i\xi.x}e^{\gamma X_\gep(x)-\frac{\gamma^2}{2}K_{\gep}(x)}  \rho(x)\ind_{A_{q,\gl}(x)} \dd x
 \end{equation}
where $ A_{q,\gl}(x)$ is defined as in \eqref{defaqq}. The key point of the proof is the following convergence
 \begin{equation}\label{labarbouze}
  \lim_{\gep,\gep'\to 0}\bbE \left[ \int_{\bbR^{d}}   |\hat M^{(\gamma,\rho,q)}_{\gep}(\xi)-\hat M^{(\gamma,\rho,q)}_{\gep'}(\xi) |^2 (1+|\xi|^2)^{-u} \dd \xi \right]=0.
 \end{equation}
 Before giving a proof of \eqref{labarbouze} (which mostly follows from the work done in Section \ref{complexgamma}) let us show how to use it to conclude.
By completeness of $L^2( \gO\otimes \bbR^d, \bbP\otimes (1+|\xi|^2)^{-u} \dd \xi)$, \eqref{labarbouze} implies that 
 $$\lim_{\gep \to 0} \hat M^{(\gamma,\rho,q)}_{\gep}=: \hat  M^{(\gamma,\rho,q)}_{0}.$$ 
exists $\bbP$-a.s.\ in $L^2( \bbR^d,  (1+|\xi|^2)^{-u} \dd \xi)$. In particular we have 
\begin{equation}\label{laconvarge}
  \lim_{\gep\to 0} \int_{\bbR^{d}}   |\hat M^{(\gamma,\rho,q)}_{\gep}(\xi)-\hat M^{(\gamma,\rho,q)}_{0}(\xi) |^2 (1+|\xi|^2)^{-u} \dd \xi =0.
 \end{equation}
in $\bbL_1$ (and hence in probability). 
Using Proposition \eqref{lapropo2}, applied to $\cA_{q,\gl}(\rho)$, there exists a random value $\mathfrak q\ge 1$ which is such that 
   $\hat M^{(\gamma,\rho,q)}_{\gep}(\xi)= \hat M^{(\gamma,\rho)}_{\gep}(\xi)$ for every $q\ge \mathfrak q$. This together with \eqref{laconvarge} implies that, in the sense of convergence in probability we have
   \begin{equation}
      \lim_{\gep\to 0} \int_{\bbR^{d}}   |\hat M^{(\gamma,\rho)}_{\gep}(\xi)-\hat M^{(\gamma,\rho,\mathfrak q)}_{0}(\xi) |^2 (1+|\xi|^2)^{-u} \dd \xi =0,
   \end{equation}
which is to say that $\hat M^{(\gamma,\rho)}_{\gep}$ converges in probability in $L^2(\bbR^d, (1+ |\xi|^2)^{-u} \dd \xi )$.

\medskip

\noindent It remains to prove \eqref{labarbouze}. We need to check the two following assumptions
\begin{equation}\begin{split}\label{fordo}
 \suptwo{\gep, \gep'\in (0,1]}{ \xi \in \bbR^d}   \bbE \left[ |\hat M^{(\gamma,\rho,q)}_{\gep}(\xi)-\hat M^{(\gamma,\rho,q)}_{\gep'}(\xi) |^2 \right]&< \infty,\\
  \lim_{\gep, \gep'\to 0}   \bbE \left[ |\hat M^{(\gamma,\rho,q)}_{\gep}(\xi)-\hat M^{(\gamma,\rho,q)}_{\gep'}(\xi) |^2 \right]&=0,
\end{split}\end{equation}
and use dominated convergence - here we are using that constants are integrable w.r.t.\ $(1+|\xi|^2)^{-u} \dd \xi$, which is true only when $u\ge d/2$.
The second line of \eqref{fordo} simply corresponds to Proposition \ref{secondacase}.
Concerning the first line of \eqref{fordo}, we simply observe that Equation \eqref{lootz}
provides a bound which is independent of $\xi$, more specifically we have
\begin{multline}
 \bbE \left[  |\hat M^{(\gamma,\rho,q)}_{\gep}(\xi)-\hat M^{(\gamma,\rho,q)}_{\gep'}(\xi) |^2\right]  \\
 \le 4 \int_{\cD^2} e^{ (\alpha^2+\beta^2) K_{\gep,\gep'}(x,y) }
\left|\bbE\left[ :e^{ i\beta \left(X_{\gep}(x)- X_{\gep'}(y)\right)}: \ind_{\tilde A_{q,\gep,\gep'}(x,y)}\right]\right|
\rho(x)\rho(y)\dd x \dd y. 
\end{multline}
Proposition \ref{ladomine2} then ensures that the r.h.s.\ above is bounded uniformly in $\gep$ and $\gep'$.
When $\alpha\le \sqrt{d/2}$ then $|\gamma|^2<d$, and we can apply the same proceedure but without the need to use the truncated version of $\hat M^{(\gamma,\rho,q)}_{\gep}$.

\qed

}
\noindent {\bf Acknowledgement:} The author is grateful to  Nathana\"el Berestycki, R. Rhodes, and Christian Webb for their feedback on a first version of this manuscript. He also wishes to thank the anonymous referee for numerous helpful comments and observation. This work has been realized in part during the author's stay in Aix Marseille University as a MSCA fellow. He acknowledges kind hospitality and support.
This project has received funding from the European Union’s Horizon 2020 research and innovation
programme under the Marie Skłodowska-Curie grant agreement No 837793.

\appendix

\section{Proof of Propositions \ref{lapropo}-\ref{lapropo2}}\label{prooflapropo}

The proof is almost identical for the two results so let us give the full details for the field $X_k$ and explain briefly how to adapt the argument for $Y_k$.
We let $D$ denote the topological support of $f$.
We are going to prove that there exists $c>0$ such that for all $k$ sufficiently large 
\begin{equation}
 \bbP(\cB_k):= \bbP\left(\sup_{x\in D} X_k(x)> \gl k\right)\le \frac{1}{c}e^{-c k}.
\end{equation}
Given $\eta>0$, let us consider a small enlargement of $D$  
$$D^+=D^+_{\eta}:= \{ x\in \cD \ : \ \exists y\in D,\ |x-y|\le \eta \}.$$
We assume that $\eta$ is sufficiently small so that $D^+\subset \cD_{\eta}$ (recall \eqref{cdgep}), and consider in the remainder of the proof that $k\ge \log \frac{1}{\eta}$.

\medskip

Let us fix $\delta>0$ such that  $2d(1+\delta)<\gl^2$.
We first prove give a bound for the maximum on a dyadic grid of mesh $e^{-(1+\delta)k}$ simply by using a union bound. Then we show that local fluctuation within a distance  $e^{-(1+\delta)k}$ are very small in amplitude. This second step of the proof is simply based on a quantitative version of the argument used to prove continuity of Gaussian processes from Kolmogorov-Chentsov criterion (see e.g.\ \cite[Section 2.2]{legallSto} for the classic proof of continuity of Brownian Motion).
Let us consider for $p\ge 1$, $\bbD_p$ the set of points in $D^+$ whose coordinates are integer multiple of $2^{-p}$
 (for large values of $p$ the cardinality of $\bbD_p$ is of order $2^{dp}$) . We set 
 \begin{equation}\label{pk0}
 p^{(k)}_0:= \left\lceil \frac{k(1+\delta)}{\log 2}  \right\rceil.
\end{equation}
 From \eqref{stimatz} the variance of $X_k(x)$ is larger than $k-C$ for every $x\in D_+$ and we have thus from Gaussian tail bound \eqref{gbound}, for some constant $C'$
\begin{equation}
\bbP\left[\max_{x\in \bbD_{p_0}} X_k(x)\ge \gl k-1 \right]\le 2 |\bbD_{p_0}| e^{-\frac{(\gl k-1)^2}{2 (k-C)}}\le C' e^{-\frac{ k [ \gl^2-2d(1+\delta)]}{2}}.
\end{equation}
Note that for every point in $x\in D$ and $p\ge p_0$  there exists a sequence $(x_p)_{p\ge p_0}$, converging to $x$  which satifies
{\red \begin{equation}
\forall p \ge p_0, \quad   x_p\in \bbD_p \quad \text{  and  } \quad   |x_{p+1}-x_{p}|\le \sqrt{d} 2^{-(p+1)}.
\end{equation}}
What we are going to show is that provided that $k$ is sufficiently large, with probability larger that $1-e^{-k}$ we have
\begin{equation}
\forall p \ge p_0+1, \forall y,z \in \bbD_p, \quad   \left\{ |y-z|\le\sqrt{d} 2^{-p} \right\} \   \Rightarrow
 \left\{ |X_{k}(y)- X_{k}(z)|\le \frac{1}{p(p-1)} \right\}
\end{equation}
and we can conclude using continuity that 
$$|X_{k}(x)- X_{k}(x_{p_0})|=\left|\sum_{p\ge p_0+1} X_{k}(x_{p})- X_{k}(x_{p-1})  \right|\le \frac{1}{p_0}\le 1.$$
In order to control local fluctuation, first note that a simple computation allows to deduce from \eqref{labig} that the Lipshitz constant of $K_{\gep}(x,y)$ is at most $C\gep^{-1} |\log \gep|$ (and hence $C ke^{k}$ for $\gep_k$).
Hence the variance of $(X_k(x)-X_k(y))$ is at most $C ke^{k}|x-y|$.
Now taking into account that the number of pair of close-by vertices below is of order $2^p$, we have 
\begin{multline}
 \bbP\left( \maxtwo{x,y \in \bbD_{p}}{|x-y|\le \sqrt{d}2^{-p}} | X_k(x)-X_k(y)|\le \frac{1}{p(p+1)} \right) \\ \le  C 2^{p}
e^{-\frac{2^p }{2C d k e^k (p+1)^2}}\le C 2^p \exp\left(-\frac{1}{C'} e^{\delta p/2}\right),
 \end{multline}
 {\red where in the last inequality we simply used the definition of $p^{(k)}_0$ (recall \eqref{pk0}).
Summing over $p\ge p_0$, we obtain that for $k$ sufficiently large
\begin{equation}
  \bbP\left(  \exists p\ge p_0, \ \maxtwo{x,y \in \bbD_{p}}{|x-y|\le \sqrt{d}2^{-p}} | X_k(x)-X_k(y)|\le \frac{1}{p(p+1)} \right) \le  e^{- k}.
\end{equation}
The field $Y_k$ possesses the same kind of regularity as $X_k$ so that the argument exposed above adapts verbatim to that case.}
\qed

\section{Proof of Lemma \ref{lemztim}}\label{tokz}
\noindent Let us start with the case $\gep,\gep'=0$ and prove 

\begin{equation}\label{topzz}
\left|K_n(x,y)- \min \left( n ,\log \frac{1}{|x-y|} \right)\right|\le C.
\end{equation}
The assumptions $Q_n(x,y)=0$ if $|x-y|\ge e^{-n}$ and $Q_n(x,y)\le \sqrt{Q(x,x)}\sqrt{Q(y,y)}=1$ immediately yields the upper bound. For the lower bound, we have, using the positivity and Lipshitz constant for $Q_k$ 
\begin{equation}
 K_n(x,y)\ge \sum_{k=1}^{ \min \left( n , \log \left(\frac{1}{|x-y|}\right) \right)} Q_k(x,y)
 \ge \sum_{k=1}^{ \min \left( n , \log \left(\frac{1}{|x-y|}\right) \right)} \left[Q_k(x,x)- C e^k|x-y|\right],
\end{equation}
and conclude from the fact that $\sum_{k=1}^{\log \left(\frac{1}{|x-y|}\right)} e^k|x-y|$ is bounded. 
From the definition of $Y_{n,\gep}$ in Equation \eqref{laconvolzz} we have 
\begin{equation}
 K_{n,\gep,\gep'}(x,y)= \int_{\bbR^d} K_n(z_1,z_2)\theta_{\gep}(z_1-x) \theta_{\gep'}(z_2-y)\dd z_1 \dd z_2.
\end{equation}
From \eqref{topzz} we can replace $K_n(z_1,z_2)$ by $\min \left( n ,\log \frac{1}{|z_1-z_2|} \right)$ and the results then follows from standard computations.

\qed

\bibliographystyle{plain}
\bibliography{../bibliography.bib}

\end{document}